%
%
%


\documentclass{amsart}




\usepackage{verbatim}
\usepackage{enumitem}
\usepackage{xcolor}
\usepackage{tikz}
\usepackage{tikz-cd}
\usepackage{arydshln}
\usepackage{caption}
\usepackage{subcaption}
\usepackage{graphicx}
\usepackage{ulem}

\tikzset{
	symbol/.style={
		draw=none,
		every to/.append style={
			edge node={node [sloped, allow upside down, auto=false]{$#1$}}}
	}
}

\usepackage{amsmath}
\usepackage{bbm}
\usepackage{hyperref}
\usepackage{times,amsfonts,amsmath,amstext,amsbsy,amssymb,
amsopn,amsthm,upref,eucal}
\usepackage[T1]{fontenc}


\newtheorem{theorem}{Theorem}[section]
\newtheorem{proposition}[theorem]{Proposition}
\newtheorem{lemma}[theorem]{Lemma}
\newtheorem{corollary}[theorem]{Corollary}

\theoremstyle{definition}
\newtheorem{definition}[theorem]{Definition}

\theoremstyle{remark}
\newtheorem{remark}[theorem]{Remark}

\newtheorem{question}[theorem]{Question}

\numberwithin{equation}{section}
\newcommand{\wX}{\widehat{X}}
\newcommand{\wY}{\widehat{Y}}
\newcommand{\wW}{\widehat{W}}
\newcommand{\wR}{\widehat{R}}
\newcommand{\wT}{\widehat{T}}

\newcommand{\field}[1]{\mathbb{#1}}
\newcommand{\R}{\field{R}}
\newcommand{\N}{\field{N}}
\newcommand{\C}{\field{C}}
\newcommand{\Z}{\field{Z}}

\newcommand{\Om}{\Omega}
\newcommand{\minuszero}{\backslash\{0\}}


\setcounter{tocdepth}{1}

\begin{document}

\title[Heisenberg translation flows]{Heisenberg translation flows}



\author{Francisco Arana--Herrera}

\email{fa50@rice.edu}

\address{Department of Mathematics, Rice University,
Herman Brown Hall for Mathematical Sciences
6100 Main Street
Houston, TX 77005.}

\author{Jayadev Athreya}

\email{jathreya@uw.edu}

\address{Department of Mathematics, Padelford Hall, 4110 E Stevens Way NE,  University of Washington, Seattle, WA, USA.}
	
\author{Giovanni Forni}

\email{gforni@umd.edu}

\address{Department of Mathematics, William E. Kirwan Hall, 4176 Campus Dr, University of Maryland, College Park, MD 20742, USA.}



\date{}

\begin{abstract}
    We study ergodic theoretical properties of flows on circle bundles over translation surfaces that arise via prequantization, generalizing the theory of Heisenberg nilflows to base surfaces more general than tori; these flows are among the most fundamental examples of parabolic dynamical systems with non-trivial central directions. In particular, we show that such flows are relatively mixing, i.e., they exhibit decay of correlations in the orthogonal complement of functions constant along fibers. We discuss applications of this result to the dynamics of such flows, to the ergodic theory on the corresponding space of wave functions, and, via surface of section constructions, to the study of affine skew products over interval exchange transformations, in the spirit of Furstenberg's classification program for measurable dynamical systems. 
\end{abstract}

\maketitle


\thispagestyle{empty}

\tableofcontents

\section{Introduction}\label{sec:intro}

\subsection{Motivation.}\label{sec:motivation} 

Quantum mechanics models phenomena sharing three important characteristics: discretization of energy levels, interference of physical states, and uncertainty of measurable quantities. The first two properties can be attained by considering the spectrum of unbounded self-adjoint linear operators on a Hilbert space of wave functions, usually the space of square-integrable sections of a Hermitian line bundle on the underlying physical space. Uncertainty, on the other hand, requires joining the fibers of such line bundle using a non-flat connection. Such models are commonly referred to as prequantizations. Physical interpretations of such constructions that capture the leading terms of the spectrum, usually referred to as a quantizations, require further assumptions on the sections considered, e.g., assuming they are holomorphic, to reduce to finite dimensional  analysis. \\

\paragraph*{\bf The Schr\"odinger equation} In (pre)quantizations of Hamiltonian systems, the time evolution of wave functions is commonly described by means of the Schr\"odinger equation:
\begin{equation}\label{eq:schrodinger}
i\hbar \frac{d}{dt} | \psi(t) \rangle = \widehat{H}|\psi(t) \rangle,
\end{equation}
where $\hbar$ is the wavelength or Planck constant of the system, $\psi(t)$ is the time evolution of the wave function, and $\smash{\widehat{H}}$ is a self-adjoint operator corresponding to the underlying Hamiltonian. The time evolution $\psi(t)$ can be described as the compound of two effects, the \textit{geometric phase}, which transports the wave function along the orbits of the Hamiltonian flow on physical space, and the \textit{dynamical phase}, which spreads out the wave function over physical space in terms of the underlying Hamiltonian.\\

\paragraph*{\bf Parabolic dynamics} Many dynamical systems of interest are not Hamiltonian. This observation is particularly important when considering models of slow chaos, i.e., parabolic dynamics, such as linear flows on translation surfaces, which are, moreover, generically minimal. Although not Hamiltonian, these flows are locally Hamiltonian, or, equivalently in the case of surfaces, area-preserving. 

The main goal of this paper is to introduce and study models of quantum mechanics where the corresponding \textit{geometric phase} is driven by a linear flow on a general translation surface $(M, \omega)$. We are interested in how the slow  chaotic behavior of these systems gives rise to the spreading out phenomena usually guaranteed by the \textit{dynamical phase}. We emphasize that, as the dynamical systems we consider are not globally Hamiltonian, but only locally Hamiltonian, this cannot be achieved by quantizing a Hamiltonian,  as in the case of integrable dynamical systems, i.e., tori~\cite{Faure07}.

Analogous models in the regime of fast chaos, i.e., over hyperbolic dynamical systems, have been studied by Faure and Tsuji \cite{FT15}, but to the knowledge of the authors, this paper constitutes the first study of such models in the parabolic case. We also note that, to the knowledge of the authors, the flows we introduce and study in this paper are the first examples of non-homogeneous parabolic dynamical systems with a non-trivial central direction. It is also important to highlight that, from an axiomatic point of view, the models we describe are indeed prequantizations of the underlying parabolic flow on physical space, even before passing to the semiclassical limit $\hbar \to 0$.\\

\paragraph*{\bf Line and circle bundles} For the models we consider, the time evolution of wave functions is induced by the dynamics of parallel transport on a circle bundle $C$ corresponding to an underlying Hermitian line bundle $L$ over the surface $(M, \omega)$ with prescribed non-trivial curvature conditions. These flows are, in their own right, interesting examples of parabolic flows in three dimensions with a central direction, the fiber direction, which is not tangent to the flow direction. These generalize Heisenberg nilflows to base surfaces more general than tori; hence we refer to the circle bundle as a \textit{Heisenberg translation bundle} and to the flow as a \textit{Heisenberg translation flow}. Furthermore, their first return maps to natural cross sections correspond to skew products over interval exchange transformations induced by explicit piecewise affine maps to the circle; this is a subject of great interest following Furstenberg's classification program for measurable dynamical systems.\\

\paragraph*{\bf Main results} The main results of this paper are the following:
\begin{enumerate}
    \item \textit{Relative mixing:} Heisenberg translation flows are relatively mixing, i.e., they are mixing on the orthogonal complement of functions constant along fibers; see Theorem \ref{theo:real_weak_mix}.
    \item \textit{Decay of correlations:} Heisenberg translation flows on wave functions exhibit decay of correlations, with speed depending on the regularity of the functions; see Theorem~\ref{thm:rel_mix_new} and Corollary \ref{cor:decay}. 
    \item \textit{Spectra:} The (continuous) spectrum of Heisenberg translation flows is all of $\mathbb{R}$; see Proposition \ref{prop:spectra}.
    \item \textit{Affine skew products:} First return maps of Heisenberg translation flows to natural cross sections correspond to skew products over interval exchange transformations induced by explicit piecewise affine maps to the circle. See Theorem \ref{theorem:admissible}.\\
\end{enumerate}

Below we elaborate more on each of these results.

\subsection{Relative mixing.}\label{sec:relative} A fundamental question in this setting is whether the dynamical and ergodic theoretical properties of the base translation surface can be exported to the lifted flows on the corresponding Heisenberg translation bundles. A unified treatment of these ideas can be achieved via the concept of \textit{relative mixing}, i.e., decay of correlations for Lebesgue $L^2$ eigenfunctions on the orthogonal complement of the subspace of functions constant along fibers. Our main result in this context is the following; for a more precise quantitative statement see Theorem \ref{thm:rel_mix_new}:

\begin{theorem}
    \label{theo:real_weak_mix}
    The lift of any translation flow on any translation surfaces to any Heisenberg translation bundle is relatively  mixing.
\end{theorem}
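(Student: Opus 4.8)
The plan is to establish relative mixing by working on each ergodic component over the base translation flow and reducing to a statement about twisted cohomological equations. First I would recall that the Heisenberg translation bundle $C \to M$ carries a natural connection $1$-form $\alpha$ whose curvature is (a multiple of) the area form $\omega$ of the translation surface; the Heisenberg translation flow $\phi_t$ on $C$ is the horizontal lift of the translation flow $h_t$ on $(M,\omega)$. The circle action on fibers commutes with $\phi_t$, so $L^2(C)$ decomposes as a Hilbert direct sum $\bigoplus_{n \in \Z} H_n$ over characters of the fiber circle, where $H_0 \cong L^2(M)$ carries the base dynamics and each $H_n$ with $n \neq 0$ is the space of $L^2$ sections of the $n$-th tensor power $L^{\otimes n}$. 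The subspace of functions constant along fibers is exactly $H_0$, so relative mixing amounts to showing that correlations $\langle \phi_t f, g \rangle \to 0$ for $f, g \in H_n$, $n \neq 0$.

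Next I would fix $n \neq 0$ and analyze the one-parameter group induced by $\phi_t$ on $H_n$. Choosing a local trivialization adapted to the flow, a section of $L^{\otimes n}$ is a function on $M$ and $\phi_t$ acts by $h_t$ composed with multiplication by a unitary cocycle $e^{2\pi i n \, \beta_t(x)}$, where $\beta_t(x) = \int_0^t \theta(h_s x)\, ds$ is the integral along orbits of a primitive $\theta$ for (the pullback of) the connection form; the key structural point, which I expect is established earlier in the paper via the cross-section/affine-skew-product description (Theorem \ref{theorem:admissible}), is that $\theta$ is, up to coboundary, a genuine nonconstant function whose time integrals grow. The strategy is then the standard one for parabolic twisted flows: to show $\langle \phi_t f, g\rangle \to 0$ it suffices, by density and unitarity, to treat $f$ smooth, and then to exploit the spectral gap or, more robustly, a direct oscillatory-integral estimate. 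Concretely I would bound the correlation by decomposing $M$ along pieces of $h_t$-orbits, writing the correlation as an integral over a transversal of $\int e^{2\pi i n \beta_t} (f \circ h_t) \bar g$, and integrating by parts in the flow direction using that $\frac{d}{dt}\beta_t = \theta \circ h_t$ is bounded away from $0$ on a set of large measure by unique ergodicity/minimality of the base flow together with the non-degeneracy of the curvature. This produces a gain of a negative power of $t$ on the bulk, while the exceptional set where $\theta \circ h_s$ is small has measure controlled by Chebyshev, giving decay after optimizing.

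An alternative and perhaps cleaner route, which I would run in parallel, is to invoke the contraction/mixing mechanism coming from the renormalization dynamics: the Teichmüller or Rauzy--Veech flow renormalizes the base translation flow, and under this renormalization the fiber twist rescales; mixing on $H_n$ then follows from an averaging argument showing that the unitary cocycle equidistributes on the circle, i.e., one shows that for every nonzero $n$ the cocycle $\beta_t$ is not a measurable coboundary, hence by the Anzai skew-product criterion the flow restricted to $H_n$ has no eigenfunctions, upgraded to mixing using the parabolic stretching. The main obstacle in either approach is the same: controlling the bad set where the derivative $\theta \circ h_s$ of the phase degenerates, uniformly enough to get genuine decay (not merely weak mixing) — this is where the specific geometry of the Heisenberg translation bundle, and in particular the fact that the curvature is the area form and hence strictly positive, must be used in an essential quantitative way, and I expect this to be the technical heart matching the "more precise quantitative statement" promised in Theorem \ref{thm:rel_mix_new}.
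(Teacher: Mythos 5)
Your setup is right: the Fourier decomposition $L^2(C)=\bigoplus_n E_n$ along the fiber circle, the reduction to showing decay of correlations on each $E_n$ with $n\neq 0$, and the recognition that the non-degenerate curvature (curvature $=$ area form) must enter quantitatively are all exactly the ingredients the paper uses. But the decay mechanism you propose does not work as stated, and the gap is in the direction in which you integrate by parts.

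You propose to write the action on $E_n$ as the base flow twisted by a cocycle $e^{2\pi i n\beta_t(x)}$ with $\beta_t(x)=\int_0^t\theta(h_sx)\,ds$, and to gain decay by integrating by parts \emph{in the flow direction}, using that $\tfrac{d}{dt}\beta_t=\theta\circ h_t$ is ``bounded away from $0$ on a large set.'' This quantity is a gauge artifact: the Heisenberg translation flow is defined by parallel transport, so in the parallel frame along an orbit the phase accumulated in the flow direction is identically zero, and in any other trivialization $\theta$ can be modified arbitrarily by a gauge change. There is no intrinsic ``phase derivative along the flow'' that is large for large $t$, so integration by parts in the flow direction produces no gain, and the Chebyshev control of a bad set you anticipate is controlling the wrong object. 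Your parallel route via the Anzai/coboundary criterion only rules out eigenfunctions on $E_n$, i.e.\ gives relative weak mixing; the ``upgrade to mixing using parabolic stretching'' is precisely the missing content.

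The correct mechanism (Marcus's shearing argument, which the paper uses) is transverse: average the correlation over a short arc of a \emph{transverse} translation flow $\Phi^{\widehat Y}_s$, $s\in[0,\delta]$, and use the Heisenberg relation $[\widehat X,\widehat Y]=\widehat R$ to compute $d\Phi^{\widehat W}_t(\widehat Y)=\widehat Y+t\widehat R$. On $E_n$ this means $\tfrac{d}{ds}\bigl(g\circ\Phi^{\widehat W}_t\circ\Phi^{\widehat Y}_s\bigr)$ contains the term $2\pi i n t\cdot\bigl(g\circ\Phi^{\widehat W}_t\circ\Phi^{\widehat Y}_s\bigr)$: the phase derivative in the transverse parameter $s$ is \emph{exactly} $2\pi i n t$, uniformly over the whole surface, because the curvature is the constant area form. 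Integrating by parts in $s$ (after one integration by parts in $s$ to move a derivative onto $f$, requiring $\widehat Y.f\in L^2$ and a density argument) then yields the uniform bound $O(1/|nt|)$ with no exceptional set whatsoever. This is where the strict positivity of the curvature is used ``in an essential quantitative way'' — not through non-vanishing of $\theta\circ h_s$, but through the fact that transverse arcs are sheared in the fiber direction at unit rate per unit time.
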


As a direct consequence of Theorem \ref{theo:real_weak_mix} we deduce the following corollary; for a more precise statement see Corollary \ref{cor:ethry}:

\begin{corollary}
    \label{cor:export}
    Suppose a translation flow on a translation surface is minimal, ergodic with respect to the Lebesgue measure, uniquely ergodic, or weak mixing with respect to the Lebesgue measure. Then, the lift of such a flow to any Heisenberg translation bundle has the corresponding dynamical properties.
\end{corollary}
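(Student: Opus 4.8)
The plan is to exploit the $S^1$-equivariance of the Heisenberg translation flow to decompose the problem fiber-mode by fiber-mode and feed each mode into Theorem~\ref{theo:real_weak_mix}. Let $\phi=(\phi_t)$ denote the translation flow on $(M,\omega)$, let $\widehat\phi=(\widehat\phi_t)$ be its lift to a Heisenberg translation bundle $\pi\colon C\to M$ with associated Hermitian line bundle $L$, and let $\nu$ be the natural $\widehat\phi$-invariant probability on $C$, so $\pi_*\nu=\mu$ is the area measure on $M$. Since $\widehat\phi$ is parallel transport for a principal connection, it commutes with the fiber rotation action of $S^1$; hence the Koopman operator of $\widehat\phi$ preserves the isotypic decomposition
\[
L^2(C,\nu)=\bigoplus_{k\in\Z}H_k,\qquad H_0=L^2(M,\mu),\quad H_k\cong L^2\bigl(M,L^{\otimes k}\bigr)\ (k\neq 0),
\]
with $H_0$ exactly the functions constant along fibers. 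Theorem~\ref{theo:real_weak_mix} (quantitatively, Theorem~\ref{thm:rel_mix_new}) states precisely that $\widehat\phi$ is mixing on $\bigoplus_{k\neq 0}H_k$; in particular its Koopman operator has no nonzero invariant vector and no eigenvector inside any $H_k$ with $k\neq 0$.

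\emph{Ergodicity and weak mixing.} If $f\in L^2(C,\nu)$ satisfies $f\circ\widehat\phi_t=e^{i\lambda t}f$ for all $t$, then, since the Koopman operator commutes with the projections onto the $H_k$, each component $f_k\in H_k$ satisfies the same identity; for $k\neq 0$ this forces $f_k=0$ by the no-eigenvector statement above, so $f=f_0$ descends to an eigenfunction of $\phi$ on $M$ with eigenvalue $\lambda$. Taking $\lambda=0$ and using ergodicity of $\phi$ shows that the only $\widehat\phi$-invariant $L^2$ functions are the constants, so $\widehat\phi$ is ergodic; allowing general $\lambda$ and using weak mixing of $\phi$ (which in particular gives the ergodicity of $\phi$ used above) shows $\widehat\phi$ has no non-constant eigenfunctions, hence is weak mixing.

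\emph{Unique ergodicity.} Let $\nu'$ be any $\widehat\phi$-invariant Borel probability on $C$. Then $\pi_*\nu'$ is $\phi$-invariant, hence equals $\mu$ by unique ergodicity of $\phi$, so $\nu'$ disintegrates as $\nu'=\int_M\nu'_y\,d\mu(y)$ over the fibers. For each $k\neq 0$ the $k$-th fiberwise Fourier coefficient of $y\mapsto\nu'_y$ defines a bounded measurable element $c_k\in H_k$, and $\widehat\phi$-invariance of $\nu'$ together with $\pi_*\nu'=\mu$ translates exactly into $c_k$ being a $\widehat\phi$-invariant vector in $H_k$; by relative mixing $c_k=0$. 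Hence $\nu'_y$ is the Haar measure on the fiber $C_y$ for $\mu$-a.e.\ $y$, i.e.\ $\nu'=\nu$, and $\widehat\phi$ is uniquely ergodic.

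\emph{Minimality.} Any closed $\widehat\phi$-invariant subset of $C$ projects to a closed $\phi$-invariant subset of $M$, hence onto all of $M$ by minimality of $\phi$; in particular $\widehat\phi$ admits a minimal set $A$ projecting onto $M$. By the classical structure theory of compact group extensions of minimal flows (Furstenberg), either $A=C$ or the holonomy cocycle of the connection reduces, via a continuous transfer function, to a proper closed subgroup $\Z/k\Z\subset S^1$ for some $k\ge 1$; in the latter case the reduction produces a nonzero continuous $\widehat\phi$-invariant section of $L^{\otimes k}$, i.e.\ a nonzero invariant vector in $H_k$ with $k\neq 0$, contradicting relative mixing. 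Hence $A=C$ and $\widehat\phi$ is minimal. The measure-theoretic items above are routine once the $S^1$-equivariant isotypic decomposition is in place, so the main obstacle is precisely this minimality step: one must invoke the structure theory of isometric (here circle) extensions of minimal flows in the flow — rather than $\Z$-action — category, check its hypotheses in our setting (connectedness of $C$, continuity and equivariance of the holonomy cocycle), and carry out the translation of "reduction to a finite subgroup" into the vanishing of $\widehat\phi$-invariant sections of the tensor powers $L^{\otimes k}$.
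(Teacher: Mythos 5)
Your proposal is correct in substance and reaches the same conclusions, but it takes a genuinely different route from the paper's proof of Corollary \ref{cor:ethry} on two of the four items. For ergodicity and weak mixing your argument is essentially the paper's: relative mixing (Theorem \ref{thm:rel_mix_new}) kills invariant vectors and eigenvectors in every nonzero Fourier mode $E_k$, so everything reduces to the base; the paper phrases this via ergodic averages and the mean ergodic theorem rather than via eigenvectors, but the mechanism is identical. For unique ergodicity the paper simply observes that $\Phi^{\wW}$ commutes with the fiber rotation $\Phi^{\wR}$ (Proposition \ref{prop:bracket}), that $\Phi^{\wW}$ is ergodic for $\mu$, and then cites Furstenberg's criterion \cite[Theorem 4.1]{Fu61}; your disintegration-plus-fiberwise-Fourier-coefficients argument is in effect a direct reproof of that criterion in this setting (and is fine, provided you note that the ``$k$-th Fourier coefficient'' of the conditional measures is only well defined as an element of $H_k$ rather than as a scalar function, since the fibers are $S^1$-torsors, and that the equivariance $(\Phi_t^{\wW})_*\nu'_y=\nu'_{\phi_t^W y}$ of the disintegration holds for each $t$ only for $\mu$-a.e.\ $y$, which suffices for $L^2$-invariance).

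The one place where your route is substantially heavier than necessary is minimality, and you correctly flag it as the weak point. The paper does not invoke the Furstenberg--Auslander structure theory of minimal sets in compact group extensions at all. Instead it argues: an ergodic component $\mu'$ of $\mu$ has full support (since its projection does, by minimality of $\phi^W$), so ergodicity yields a single point $(m_0,\rho_0)$ with dense $\Phi^{\wW}$-orbit; since the fiber rotation commutes with the flow, \emph{every} point of the fiber $C_{m_0}$ has dense orbit; and for an arbitrary $(m,\rho)$, minimality of the base plus compactness produces points on its orbit accumulating at some $(m_0,\rho_1)\in C_{m_0}$, whence density of its orbit follows by the triangle inequality. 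This three-line argument replaces your appeal to the structure theory of isometric extensions (whose hypotheses you would indeed have to verify for flows on a possibly non-trivial principal $S^1$-bundle, and whose conclusion you would have to translate into an invariant unit section of $L^{\otimes k}$ before contradicting relative mixing). Your outline of that translation is correct — the closed subgroups of $S^1$ are $S^1$ and the finite cyclic groups, and a flow-invariant reduction to $\Z/k\Z$ does give a continuous nowhere-vanishing invariant vector in a nonzero mode, which relative mixing forbids since $\mu$ has full support — but as written this step is an unverified appeal to external machinery, and I would recommend replacing it with the elementary dense-orbit argument above.
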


\subsection{Prequantization.}\label{subsec:prequant} Classical observables of a translation surface $(M,\omega)$ correspond to elements of $L^2(M, \omega)$, the space of real-valued square-integrable functions on $M$ with respect to the area form induced by $\omega$. \textit{Prequantum observables} correspond to unbounded self-adjoint operators on $\Gamma^2(L)$, the Hilbert space of $L^2$ sections of the chosen Hermitian line bundle $L$; denote the group of such operators by $\mathfrak{u}(\Gamma^2(L))$. Prequantization seeks an assignment between classic and quantum observables, that is, a map \[
    \mathcal{O} \colon L^2(M) \to \mathfrak{u}(\Gamma^2(L))
    \] satisfying \textit{Dirac's quantum conditions}; see Definition~\ref{def:prequant}. We will define a \textit{geometric prequantization} satisfying such conditions; see Definition~\ref{def:geomprequant}. For this construction it will be crucial that the Hermitian line bundles we consider have non-vanishing curvature. 

\subsection{Dynamics on wave functions.}\label{subsec:wavedyn} Our goal is not to simply prequantize the observables of a translation surface, but also to prequantize the dynamics of the corresponding linear flows. A \textit{prequantization} of a linear flow $\{\phi_t^X \colon M \to M\}_{t \in \mathbb{R}}$ on a translation surface $(M,\omega)$ with respect to a prequantization $\mathcal{O} \colon L^2(M) \to \mathfrak{u}(\Gamma^2(L))$ is a one-parameter subgroup $\mathcal{U} := \{\mathcal{U}_t\}_{t \in \mathbb{R}} \subseteq U(\Gamma^2(L))$ of the group $U(\Gamma^2(L))$ of bounded unitary operators on $\Gamma^2(L)$ satisfying the following conjugacy relation for $f$ in the domain of $\mathcal O$:
$$(\mathcal{U}^X_t)^{-1} \circ \mathcal{O}(f) \circ \mathcal{U}^X_t = \mathcal{O}(f \circ \phi^X_t);$$ see Definition~\ref{def:prequant:dynamics}. We will introduce a family of \textit{wave transforms} that will map the space $\Gamma^2(L^k)$ of square-integrable sections of the $k^{th}$ tensor power $L^k$ of the line bundle $L$ to a subspace $E_{-k} \subset L^2(C)$, in fact, an eigenspace of an appropriate central operator, in such a way that the natural flows arising on each space are intertwined; see Theorem~\ref{theo:iso} for a precise statement. This will allow us to derive several consequences to Theorem \ref{theo:real_weak_mix} in the prequantized setting. In particular, we will deduce spectral properties of $\nabla_X$, the corresponding prequantized Schrödinger operator.

\subsection{Affine skew products over interval exchange transformations.}\label{sec:affineskew} The first return map of any translation flow to any transverse geodesic segment of any translation surface is an interval exchange transformation (IET) with piecewise constant first return times, colloquially referred to as the \textit{heights}. Reciprocally, as illustrated by Veech's famous \textit{zippered rectangles} construction \cite{veech}, every interval exchange transformation can be suspended using suitable piecewise constant roof functions to obtain translation surfaces. Given an IET $T$ of $d$ intervals, the set of suitable piecewise constant roof functions is a cone in $\R_+^d$, which we denote $H_T$. An analogous, albeit more complicated, picture also holds for Heisenberg translation bundles. Using the connection of the bundle we can parallelize it over any geodesic segment of the base translation surface. The first return map to the corresponding cross section is then a piecewise affine skew product over the underlying interval exchange transformation. More precisely, we show the following; see Theorem \ref{theo:skew2} for a more explicit version of this result:

\begin{theorem}
    \label{theorem:admissible}
    Let $T \colon I \to I$ be an IET of $d$ intervals. Denote $I = \cup_{j=1}^d I_j$ with the $I_j$ being the intervals of continuity of $T$. Then, there exists an explicit affine subspace $\mathcal B_T \subseteq \R^d$ such that the set $\mathcal{A}_T = H_T \times \mathcal B_T \subset \R_+^d \times \R^d$ parameterizes the  piecewise-linear skewing functions, with pieces corresponding to the intervals $I_j$, such that the skew-products over $T$ with such skewing functions are exactly those which arise as first return maps to the corresponding cross sections  of the lift to Heisenberg translation bundles of piecewise constant suspension flows over $T$. More precisely, the first return maps $\smash{\wT}: I \times \R/\Z \to I \times \R/\Z$ which occur are of the form: $$\wT(x, \rho) = (T(x), \rho + h_j x+b_j) \quad \text{if} \quad x \in I_j, \quad \text{for $(h, b) \in \mathcal A_T$}.$$
\end{theorem}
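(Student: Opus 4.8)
The plan is to reduce to an explicit model --- a zippered rectangles suspension of $T$ --- compute the first return map of the horizontal lift directly, and then characterize which skewing constants can occur. Given a roof vector $h \in H_T$, realize the piecewise constant suspension flow over $T$ with roof $h$ as the vertical translation flow on the translation surface $(M_h,\omega_h)$ assembled from disjoint rectangles $R_j = I_j \times [0,h_j]$ glued along their tops by $(x,h_j)\sim(T(x),0)$ and along their vertical sides by horizontal translations (Veech's zippered rectangles, \cite{veech}), with $dx = \mathrm{Re}\,\omega_h$ and $dy = \mathrm{Im}\,\omega_h$; the vertical flow then has first return map $T$ to $X := I\times\{0\}$, with return time $h_j$ on $I_j$. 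By definition a Heisenberg translation bundle over $(M_h,\omega_h)$ is a circle bundle $C\to M_h$ with a connection $\theta$ whose curvature is a fixed multiple $\kappa$ of the area form $dx\wedge dy$ (the normalization built into the definition is what makes the constant below come out as $h_j$), and the lifted flow is the $\theta$-horizontal lift of the vertical flow. Parallelizing $C$ over $X$ means trivializing $C|_X$ by a $\theta$-horizontal section; such a section exists and is unique up to a single overall rotation because $X$ is a segment, and it identifies the lifted cross section with $I\times\R/\Z$.

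In this trivialization the return map is $\wT(x,\rho) = (T(x),\rho + c_j(x))$, where $c_j(x)$ is the holonomy of $\theta$ along the first return orbit $\gamma_x$, which runs straight up from $(x,0)$ to $(x,h_j)$ and lands at $(T(x),0)$ via the top gluing. As $x$ ranges over $I_j$ the orbits $\gamma_x$ foliate $R_j$: for $x,x'\in I_j$ the orbits $\gamma_x,\gamma_{x'}$ together with the two $X$-segments joining their endpoints bound the sub-rectangle $[x,x']\times[0,h_j]$, and since the chosen section is horizontal along $X$ those two segments contribute no holonomy, so Stokes' theorem gives $c_j(x')-c_j(x) = \kappa\,h_j\,(x'-x)$ up to sign. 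Hence $c_j(x) = \kappa h_j x + b_j$ for a constant $b_j\in\R/\Z$, which with the curvature normalization is $h_j x + b_j$: the skewing function is affine on $I_j$ with slope equal to the height there. The same computation shows that these are the only first return maps that occur and that the map is determined by the pair $(h,b)$.

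It remains to identify the set of admissible constants $(b_1,\dots,b_d)$. For the fixed surface $(M_h,\omega_h)$ --- whose topological type, hence genus and relevant first homology, depends only on $T$ --- the isomorphism classes of circle bundles carrying a connection of curvature $\kappa\,dx\wedge dy$ form, once the Euler number/area normalization $\kappa\int_{M_h}dx\wedge dy\in\Z$ is in force, a torsor over $H^1(M_h;\R/\Z)$, and the assignment $C\mapsto(b_j)$ is the evaluation of this torsor on the homology classes of the loops $\delta_j\subset M_h$ obtained by closing the core of $R_j$ with a path in $X$. This assignment is affine, so its image is a coset in $(\R/\Z)^d$ of the subgroup spanned by the periods of the $\delta_j$; the relations among the $[\delta_j]$ and the coset offset (the offset comes from the global area/Euler-number relation and is what makes the answer affine rather than linear) are read off from the IET combinatorics, and lifting to $\R^d$ yields the claimed affine subspace $\mathcal B_T$, depending only on $T$. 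Both directions of the statement follow: every Heisenberg translation bundle produces a pair in $H_T\times\mathcal B_T$, and every such pair is realized, since the torsor surjects onto the coset. The explicit description of $\mathcal B_T$ is then the content of Theorem~\ref{theo:skew2}.

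The conceptual content is light; the work is bookkeeping. Keeping signs and the curvature normalization consistent across the rectangle trivializations is routine, but I expect the real effort to lie in the combinatorial analysis near the cone points of $(M_h,\omega_h)$: one must turn the holonomy (equivalently, transition-function cocycle) conditions around the singularities into the explicit system of linear equations cutting out $\mathcal B_T$, and verify that this system depends on $T$ alone. That singularity bookkeeping is the main obstacle.
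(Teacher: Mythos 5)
Your proposal follows essentially the same route as the paper: realize the suspension via zippered rectangles, use the curvature condition (Stokes over sub-rectangles of $R_j$) to show the skewing function on $I_j$ is affine with slope equal to the height $h_j$, and then characterize the admissible constants $b$ via the classification of Heisenberg bundles by $H^1(M;\R)/H^1(M;\Z)$ evaluated on the vertical loops closing up the rectangle cores. The "singularity bookkeeping" you defer is precisely what the paper carries out in its definition of admissible skew products and the proof of Theorem~\ref{theo:skew2}, namely translating the homology relations among those loops (governed by the orbits of the permutation $\sigma$, i.e.\ the cone points) into the explicit linear congruences cutting out $\mathcal B_T$.
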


\paragraph*{\bf Dynamical properties} As a direct consequence of Theorem \ref{theorem:admissible} and Corollary \ref{cor:export} we deduce the following result:

\begin{corollary}\label{cor:admissible}
    Let $T \colon I \to I$ be an IET. Suppose $T$ is minimal, ergodic with respect to the Lebesgue measure, or uniquely ergodic. Then, any skew-product over $T$ given by a skewing function $a = (h, b) \in \mathcal{A}_T$ has the corresponding dynamical properties.
\end{corollary}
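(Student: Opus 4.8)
The plan is to deduce Corollary~\ref{cor:admissible} from the combination of Theorem~\ref{theorem:admissible}, which realizes the skew-product $\wT$ over $T$ with skewing data $(h,b) \in \mathcal{A}_T$ as the first return map to a cross section of the lift $\widehat{\phi}^X$ of a piecewise constant suspension flow $\phi^X$ over $T$ to an associated Heisenberg translation bundle, and Corollary~\ref{cor:export}, which transfers minimality, Lebesgue ergodicity, and unique ergodicity from a translation flow to its Heisenberg lift. The bridge between the two is the standard principle that a dynamical property of a flow is inherited by its first return map to a cross section (and conversely), provided the section is a genuine global cross section met by every orbit; I would isolate this as the only real content of the argument.

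Concretely, I would proceed as follows. First, given the IET $T$ with the hypothesized property (minimal, Lebesgue ergodic, or uniquely ergodic), fix any admissible piecewise constant roof function in $H_T$ — for instance, the one implicitly selected in the construction of Theorem~\ref{theorem:admissible} — and let $(M,\omega)$ be the resulting translation surface with its vertical (or appropriately chosen) translation flow $\phi^X$, so that the first return map of $\phi^X$ to the base geodesic segment $I$ is precisely $T$. Since suspension preserves minimality, Lebesgue ergodicity, and unique ergodicity (the suspension flow is minimal/ergodic/uniquely ergodic if and only if the base map is, because invariant sets and invariant measures correspond bijectively via the product-with-$\R$ construction and the Ambrose--Kakutani correspondence), the flow $\phi^X$ on $(M,\omega)$ has the same property. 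Now apply Corollary~\ref{cor:export}: the lift $\widehat{\phi}^X$ to the Heisenberg translation bundle has the corresponding dynamical property as well.

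Finally, invoke Theorem~\ref{theorem:admissible}: for the chosen roof function and for our given $(h,b) \in \mathcal{A}_T$, the skew-product $\wT$ on $I \times \R/\Z$ is realized as the first return map of $\widehat{\phi}^X$ to an explicit cross section $\Sigma \cong I \times \R/\Z$ obtained by parallelizing the bundle over $I$. Since $\Sigma$ is a global cross section — every orbit of $\widehat{\phi}^X$ meets it, as the return times are the heights of $T$, which are bounded below and above — the first return map inherits minimality, ergodicity with respect to the transverse invariant measure (which is Lebesgue on $I \times \R/\Z$ by construction), and unique ergodicity from $\widehat{\phi}^X$. This gives exactly the conclusion for $\wT$.

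The main obstacle, and the step deserving the most care, is the passage between the suspension flow and its base IET on one side, and between the Heisenberg flow and its cross-section skew-product on the other: one must check that the cross section provided by Theorem~\ref{theorem:admissible} is indeed a global section with return times uniformly bounded away from $0$ and $\infty$ (so that no orbit escapes to infinity or accumulates on the section), that the transverse invariant measure pulls back to Lebesgue measure on $I \times \R/\Z$ as claimed, and — for the unique ergodicity statement — that there is no invariant measure for $\wT$ failing to extend to an invariant measure for $\widehat{\phi}^X$. These are standard facts about suspensions and cross sections (Ambrose--Kakutani), but since the Heisenberg bundle is noncompact in the fiber direction only after passing to the line bundle picture — here the fiber is the compact circle $\R/\Z$ — the compactness needed for the measure-correspondence arguments is available, and the verification is routine once the cross-section structure from Theorem~\ref{theorem:admissible} is made explicit. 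I would therefore present the proof as a short formal deduction, relegating the suspension/cross-section correspondence to a cited lemma or a one-line remark.
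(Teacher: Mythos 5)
Your proposal is correct and follows essentially the same route as the paper's own proof (carried out as Corollary~\ref{cor:erg}): realize $\wT$ as the first return map of the Heisenberg flow on a cross section over the suspension of $T$, transfer minimality, ergodicity, and unique ergodicity upward via the suspension correspondence and Corollary~\ref{cor:export}, and descend via the orbit and invariant-measure correspondence between a flow and its first return map. The one point to state more precisely is that the roof function is not chosen arbitrarily from $H_T$ but must be the vector $h$ from the given skewing datum $(h,b)\in\mathcal{A}_T$, since the heights of the zippered rectangles are exactly the slopes of the skew product — which you implicitly acknowledge by deferring to the construction in Theorem~\ref{theorem:admissible}.
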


\paragraph*{\bf The cohomological equation} Furthermore, using an argument of Furstenberg~\cite{Fu61}, we will deduce the following result from Corollary~\ref{cor:admissible}:

\begin{corollary}\label{cor:cohomological} Let $T: I \rightarrow I$ be a uniquely ergodic IET and $g = (h, b) \in \mathcal A_T$ be an admissible skewing function. Then there is no $L^2$-solution $u$ to the cohomological equation \begin{equation}\label{eq:cohom} u\circ T - u = g.\end{equation}
\end{corollary}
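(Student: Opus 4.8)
The plan is to argue by contradiction, using the classical dictionary, going back to Furstenberg~\cite{Fu61}, between (measurable) coboundaries and the non-ergodicity of the associated skew product. The only dynamical input needed is the ergodicity of the skew product $\wT$, which Corollary~\ref{cor:admissible} provides: since $g=(h,b)\in\mathcal A_T$ is admissible and $T$ is uniquely ergodic, Corollary~\ref{cor:admissible} guarantees that
$$\wT\colon I\times\R/\Z\to I\times\R/\Z,\qquad \wT(x,\rho)=(T(x),\rho+h_jx+b_j)\ \text{ for }x\in I_j,$$
is uniquely ergodic, hence in particular ergodic with respect to the product of Lebesgue measures (which it preserves, $T$ being measure preserving and the fiber maps being circle rotations).

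Now suppose toward a contradiction that some $u\in L^2(I)$ satisfies $u\circ T-u=g$ almost everywhere, where $g$ is read as the piecewise affine real-valued function $x\mapsto h_jx+b_j$ on $I_j$. Put $v\defin e^{-2\pi i u}$, a measurable function on $I$ of modulus one; the cohomological equation gives $v\circ T=v\cdot e^{-2\pi i g}$ almost everywhere. Hence the bounded (so $L^2$) function $F(x,\rho)\defin v(x)\,e^{2\pi i\rho}$ on $I\times\R/\Z$ is $\wT$-invariant a.e.: for $x\in I_j$ one computes $F(\wT(x,\rho))=v(Tx)\,e^{2\pi i(\rho+h_jx+b_j)}=v(x)\,e^{-2\pi i g(x)}e^{2\pi i g(x)}e^{2\pi i\rho}=F(x,\rho)$. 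Since $|F|\equiv 1$ while $\int_{I\times\R/\Z}F=\big(\int_I v\big)\big(\int_{\R/\Z}e^{2\pi i\rho}\,d\rho\big)=0$, the function $F$ is not almost everywhere constant, contradicting the ergodicity of $\wT$. Therefore no $L^2$-solution $u$ exists. The same exponentiation argument, after lifting, rules out measurable solutions valued in $\R/\Z$ and, more generally, any measurable coboundary relation, so the statement is not really about integrability.

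The genuine content of this corollary lies entirely upstream: it is Theorem~\ref{theorem:admissible} --- identifying $\mathcal A_T$ as exactly the family of skewing data realized by first return maps of lifts of suspension flows over $T$ to Heisenberg translation bundles --- together with Corollary~\ref{cor:export} (itself a consequence of the relative mixing Theorem~\ref{theo:real_weak_mix}) that make Corollary~\ref{cor:admissible}, and hence the ergodicity of $\wT$, available. The only point requiring a little care in the present deduction is to make sure the skew product attached to an admissible $g$ is literally one of those produced by Theorem~\ref{theorem:admissible}, so that Corollary~\ref{cor:admissible} applies verbatim; once this identification is in place, the reduction mod $1$ and the exponentiation trick are routine and present no serious obstacle.
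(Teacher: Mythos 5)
Your proof is correct and follows essentially the same route as the paper: the paper invokes Furstenberg's criterion (Theorem~\ref{theo:furst}) as a black box to conclude that a solution of \eqref{eq:cohom} would contradict the unique ergodicity of $\wT$ supplied by Corollary~\ref{cor:admissible}, whereas you simply inline the relevant direction of that criterion by exponentiating $u$ to produce the non-constant invariant function $F(x,\rho)=e^{-2\pi i u(x)}e^{2\pi i\rho}$. The extra care you take in verifying that $F$ is genuinely non-constant (via $|F|\equiv 1$ but $\int F=0$) is a nice touch, but the argument is the same.
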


\paragraph*{\bf Organization} The paper is organized as follows: in \S\ref{sec:heisenberg} we introduce the class of Hermitian line bundles and associated circle bundles, referred to as \textit{Heisenberg translation bundles}, on which we will do dynamics. In \S\ref{sec:heisenberg:flow} we introduce the notion of \textit{Heisenberg translation flows} and prove our main result on them, Theorem \ref{theo:real_weak_mix}. In \S\ref{sec:spectral} we state and prove our results on the spectra of the associated Koopman operators of these flows. In \S\ref{sec:prequant} we formalize our notion of prequantization. In \S\ref{sec:dynwave} we state and prove our results on the corresponding wave functions and their decay of correlations. Finally, in \S\ref{sec:affineskewiet} we prove Theorem~\ref{theorem:admissible} and connect this to ideas of Furstenberg to prove Corollary~\ref{cor:cohomological}. \\

\paragraph*{\bf Acknowledgments} We thank Artur Avila for suggesting the proof of Lemma~\ref{lemma:no_gaps}. We also would like to thank Mihaljo Ceki\'c, Erwan Lanneau, and Gabriel Paternain for useful discussions.  JSA was partially supported by the United States National Science Foundation (DMS 2404705), the Pacific Institute for the Mathematical Sciences, and the Victor Klee Faculty Fellowship at the University of Washington. GF was partially supported by by the United States National Science Foundation (DMS 2154208) and co-funded by the European Union (ERC-AdG 101142840-BRen). Views and opinions expressed are however those of the authors only and do not necessarily reflect those of the European Union or the European Research Council. Neither the European Union nor the granting authority can be held responsible for them.

\section{Heisenberg translation bundles}\label{sec:heisenberg}

We now introduce the main objects of study of this paper, \textit{Heisenberg translation bundles}, which arise as geometric prequantizations of translation surfaces. Following \cite{quant}, we give a brief overview of the general theory as well as the basic existence and classification results in our context.

\subsection{Heisenberg translation bundles.}\label{sec:heisenberg:bundles} Let $(M,\omega)$ be a translation surface, i.e., $M$ is a Riemann surface and $\omega$ is a holomorphic $1$-form, also called an Abelian differential, on $M$. The differential $\omega$ induces a singular symplectic form $\eta := \frac{i}{2}(\omega \wedge\overline{\omega})$ on $M$; the singularities of the symplectic form correspond to the zeroes of the differential. In coordiantes where $\omega = dz = dx+idy$ we have $\eta = dx \wedge dy$. The total area of $(M,\omega)$ is
\[
\mathrm{Area}(M,\omega):= \int_M \eta.
\]
We will also use $\eta$ to refer to the Euclidean measure on $M$ induced by the form $\eta$.
\begin{definition}
    \label{def:hcb}
    Let $(M,\omega)$ be a translation surface, $\eta := \frac{i}{2}(\omega \wedge\overline{\omega})$ be its corresponding symplectic form, and $\hbar > 0$. A \textit{Heisenberg line bundle} on $(M,\omega)$ of wavelength $\hbar$ is a Hermitian line bundle $L \to M$ endowed with a connection $\nabla$ compatible with the Hermitian structure and of curvature $\hbar^{-1}\eta$. Restricting this line bundle to elements of Hermitian norm $1$ yields a circle bundle $C \to X$ we refer to as a \textit{Heisenberg circle bundle}. We write $L_m$ and $C_m$ for the fibers of $L$ and $C$, respectively, over $m \in M$. 
\end{definition}

\begin{remark}
    The reader unfamiliar with the vocabulary of differential geometry can interpret Definition \ref{def:hcb} as follows. The fibers of the line bundle $L \to M$ can be communicated using a parallel transport operation along paths on the base induced by the connection $\nabla$; see Figure \ref{fig:curv}. This operation preserves the Hermitian inner product on fibers. In particular, to every closed loop $\gamma$ on $M$ one can assign a complex number $\mathrm{hol}(\gamma) \in \mathbb{C}$ of modulus $1$ representing the effect of such parallel transport along the loop. The curvature condition can be translated as follows: If $\gamma$ is a loop bounding a disk $D \subseteq M$ then:
    
    \begin{equation}
    \label{eq:curv}
    \mathrm{hol}(\gamma) = \exp\left(i \int_D \eta \right).
    \end{equation}
\end{remark}

	\begin{figure}[ht]
		\centering
		\begin{subfigure}{.45\textwidth}
			\centering
			\vspace{+0.5cm}
			\includegraphics[scale=.8]{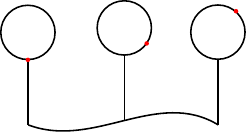}
		\end{subfigure}
		\begin{subfigure}{.45\textwidth}
			\centering
			\vspace{+.64cm}
			\includegraphics[scale=.8]{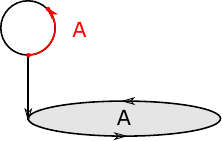}
		\end{subfigure}
        \caption{Parallel transport and the curvature relation \eqref{eq:curv}.}
        \label{fig:curv}
	\end{figure}

\begin{remark}
    For the rest of the paper, whenever a connection is considered on a Hermitian line bundle, we will always assume it is compatible with the Hermitian structure.
\end{remark}

\paragraph*{\bf Hermitian line bundles on tori and Heisenberg nilmanifolds} Given a Hermitian line bundle $L$ on the torus $\R^2/\Z^2$, viewed as the translation surface $(\C/\Z[i], dz)$, together with a connection with curvature corresponding to the natural associated symplectic form, the associated circle bundle $C$ will be a Heisenberg nilmanifold, that is, a quotient of the Heisenberg group     
    \[
    H := \left\lbrace \left(\begin{array}{ccc}
        1 & x & r \\
        0 & 1 & y \\
        0 & 0 & 1
    \end{array} \right) \ \colon\ p,q,r \in \mathbb{R} \right\rbrace
    \]
    by a lattice $\Lambda$ whose covolume corresponds to the wavelength. The three natural vector fields $\smash{\widehat{X}, \widehat{Y}, \widehat{R}}$ associated to the coordinates $x$, $y$, $r$ correspond to the horizontal, vertical, and fiber direction flows on the circle bundle $C$ and satisfy the Heisenberg relations \begin{equation}\label{eq:heisrel} \left[\widehat{X}, \widehat{Y}\right] = \widehat{R}, \quad \left[\widehat{X}, \widehat{R}\right] = 0,  \quad \left[\widehat{Y}, \widehat{R}\right] = 0.
    \end{equation}

\subsection{Existence and classification.} A first natural question is whether Heisenberg line bundles exist over any (higher genus) translation surface and, if so, how to classify such bundles. The answer to these questions depends on the \textit{Weil integrality condition} which in our context translates to
\[
\mathrm{Area}(M,\omega) = \int_{M} \eta\in 2\pi\mathbb{Z}.
\]
Because $M$ is a surface, this condition is equivalent to
\[
(2\pi)^{-1}[\eta] \in H^2(M;\mathbb{Z}) \subseteq H^2(M;\mathbb{R}),
\]
which is the more traditional way this condition can be found in the literature.\\

\paragraph*{\bf Contractible open covers} To state and prove our main technical lemma, the \textit{Weil integrality lemma} (see Lemma~\ref{lem:weil}), which is needed to establish the existence of Heisenberg line bundles, we first recall that an open covering $\mathcal{U}$ of a surface $M$ is said to be \textit{contractible} if
\begin{enumerate}
    \item $\mathcal{U}$ is locally finite.
    \item Every $U \in \mathcal{U}$ is relatively compact.
    \item Every non-empty finite intersection of elements of $\mathcal{U}$ is contractible.
\end{enumerate}
\begin{remark}\label{rem:opencovers} It is well known that every surface admits a contractible open cover; this can be done, for instance, by considering a triangulation. Furthermore, the set of contractible open covers is cofinal in the partially ordered set of open covers under refinement. 
\end{remark}

\paragraph*{\bf Weil integrality} We are now ready to state Lemma~\ref{lem:weil}, the \textit{Weil integrality lemma}; we do not present a proof here but the result follows by using the standard isomorphism between de Rham cohomology and \v{C}ech cohomology; see for instance \cite[Appendix A.6]{quant}. 

\begin{lemma}
    \label{lem:weil}
    Let $M$ be a surface, $\eta$ be a $2$-form on $M$ with $$(2\pi\hbar)^{-1}[\eta] \in H^2(M;\mathbb{Z}) \subseteq H^2(M;\mathbb{R})$$ for some $\hbar > 0$, and $\mathcal{U}$ be a contractible open cover of $M$. Then, there exist 
    \begin{itemize}
        \item $1$-forms $\theta_U$ defined on $U$ for every $U \in \mathcal{U}$,
        \item real valued functions $\varphi_{UV}$ defined on $U \cap V \neq \emptyset$ for every non-empty intersection $U\cap V \neq \emptyset$ of elements $U,V \in \mathcal{U}$,
    \end{itemize}
     such that the following conditions hold:
    \begin{enumerate}
        \item For every $U \in \mathcal{U}$,
        \[
        d\theta_U = \eta \quad \text{on} \quad U.
        \]
        \item For every non-empty intersection $U\cap V \neq \emptyset$ of elements $U,V \in \mathcal{U}$,
        \[
        d\varphi_{UV} = \theta_U  - \theta_V \quad \text{on} \quad U\cap V.
        \]
        \item For every non-empty intersection $U \cap V \cap W \neq \emptyset$ of elements $U,V,W \in \mathcal{U}$,
        \[
        f_{UVW} := (\varphi_{UV} + \varphi_{VW} + \varphi_{WU})/2\pi\hbar
        \]
        is constant and integer valued on $U \cap V \cap W \neq \emptyset$.
    \end{enumerate}
\end{lemma}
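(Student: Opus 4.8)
The plan is to run the standard \v{C}ech--de Rham comparison argument, keeping careful track of the integrality constant $2\pi\hbar$. First, since every $U \in \mathcal{U}$ is contractible, the Poincar\'e lemma gives $H^2_{dR}(U) = 0$, so the closed $2$-form $\eta|_U$ admits a primitive $\theta_U$ with $d\theta_U = \eta$ on $U$; this settles (1). Next, on any nonempty intersection $U \cap V$ --- which is contractible, hence connected and simply connected --- the $1$-form $\theta_U - \theta_V$ is closed, so again by the Poincar\'e lemma it is exact, yielding a function $\varphi_{UV}$ on $U \cap V$ with $d\varphi_{UV} = \theta_U - \theta_V$; this settles (2). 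Fixing a total order on $\mathcal{U}$, I would choose the functions $\varphi_{UV}$ for $U < V$ and extend them by $\varphi_{VU} := -\varphi_{UV}$, so that $\{\varphi_{UV}\}$ becomes an alternating \v{C}ech $1$-cochain with values in the sheaf of smooth functions, still satisfying $d\varphi_{UV} = \theta_U - \theta_V$. Finally, on any nonempty triple intersection $U \cap V \cap W$, which is connected, the function $g_{UVW} := \varphi_{UV} + \varphi_{VW} + \varphi_{WU}$ satisfies $dg_{UVW} = (\theta_U - \theta_V) + (\theta_V - \theta_W) + (\theta_W - \theta_U) = 0$, so $g_{UVW}$ is a constant, and hence $f_{UVW} = g_{UVW}/2\pi\hbar$ is a constant real number. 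This gives the first half of (3), and it remains only to show the constants can be chosen in $\mathbb{Z}$.

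For the integrality, one first checks that $\{g_{UVW}\}$ is a \v{C}ech $2$-cocycle valued in the constant sheaf $\mathbb{R}$: on a nonempty quadruple intersection the alternating sum $g_{VWZ} - g_{UWZ} + g_{UVZ} - g_{UVW}$ vanishes since each $\varphi$-term cancels in pairs. By the de Rham theorem for the good cover $\mathcal{U}$ (Leray's theorem applies because all nonempty finite intersections are contractible), the zig-zag we have just performed in the \v{C}ech--de Rham double complex --- from $\eta$, to $\{\theta_U\}$, to $\{\varphi_{UV}\}$, to $\{g_{UVW}\}$ --- exhibits the class $[\{g_{UVW}\}] \in \check H^2(\mathcal{U};\mathbb{R}) \cong H^2(M;\mathbb{R})$ as $\pm [\eta]$, and hence $[\{f_{UVW}\}] = \pm(2\pi\hbar)^{-1}[\eta]$. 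By hypothesis this class lies in the image of $H^2(M;\mathbb{Z}) \cong \check H^2(\mathcal{U};\mathbb{Z}) \to \check H^2(\mathcal{U};\mathbb{R})$, so there is an integer-valued \v{C}ech $2$-cocycle $\{n_{UVW}\}$ with $f_{UVW} - n_{UVW} = c_{VW} - c_{UW} + c_{UV}$ for some alternating $1$-cochain $\{c_{UV}\}$ of real constants. Replacing $\varphi_{UV}$ by $\varphi_{UV} - 2\pi\hbar\, c_{UV}$ leaves $d\varphi_{UV} = \theta_U - \theta_V$ unchanged (only a constant was subtracted), while the new value of $f_{UVW}$ becomes $f_{UVW} - (c_{UV} + c_{VW} + c_{WU}) = n_{UVW} \in \mathbb{Z}$, as required.

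I expect the main obstacle to be bookkeeping rather than conceptual: making the \v{C}ech--de Rham zig-zag precise with a fixed sign and normalization convention, and confirming that the resulting \v{C}ech class is \emph{exactly} $(2\pi\hbar)^{-1}[\eta]$ and not some other rational multiple of it, so that the Weil integrality hypothesis is invoked with the correct constant $2\pi\hbar$. Once the identification of cohomology classes is pinned down, the final correction step --- adjusting each $\varphi_{UV}$ by a real constant --- is routine. Alternatively, when $M$ is compact one can avoid the abstract double-complex formalism and argue by induction over a finite subnerve of $\mathcal{U}$, redefining the $\varphi_{UV}$ one intersection at a time; this is in essence the computation presented in \cite[Appendix A.6]{quant}.
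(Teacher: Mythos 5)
Your argument is correct and is precisely the standard \v{C}ech--de Rham zig-zag that the paper itself invokes: the paper gives no proof of Lemma~\ref{lem:weil}, stating only that it "follows by using the standard isomorphism between de Rham cohomology and \v{C}ech cohomology" with a pointer to \cite[Appendix A.6]{quant}, which is exactly the route you take. Your final correction step (adjusting the $\varphi_{UV}$ by real constants so that $f_{UVW}$ lands in $\mathbb{Z}$) is the right way to pass from integrality of the cohomology class to integrality of a representative cocycle, and the sign ambiguity you flag is harmless since $H^2(M;\mathbb{Z})$ is closed under negation.
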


\paragraph*{\bf Existence and classification} We will introduce the existence and classification results for Heisenberg line bundles in the same statement. The classification part of the result relies on the classification of Hermitian line bundles with a compatible flat connection, i.e., of curvature zero; such a classification can be obtained by recalling that flat Hermitian line bundles are completely specified by their holonomy representations. More concretely:

\begin{theorem}
    \label{theo:flat}
    Let $M$ be a surface. Then, there exists a $1$-to-$1$ correspondence
    \[
    \left\lbrace \begin{array}{c}
         \text{Isomorphism classes of } \\
         \text{line bundles on $M$ with a flat connection}
    \end{array}
    \right\rbrace \longleftrightarrow H^1(M;\mathbb{R})/H^1(M;\mathbb{Z}).
    \]
\end{theorem}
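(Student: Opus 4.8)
The plan is to construct the correspondence in both directions and check it is well-defined and bijective. First I would recall that a Hermitian line bundle with a flat compatible connection on $M$ is determined, up to isomorphism, by its holonomy representation $\pi_1(M) \to U(1)$; this is standard since parallel transport along loops is locally constant in the homotopy class, and conversely any unitary character of $\pi_1(M)$ gives rise to a flat bundle via the associated bundle construction on the universal cover. Two such representations yield isomorphic bundles (with connection) if and only if they are conjugate, but $U(1)$ is abelian, so conjugacy is trivial and the classification is exactly $\mathrm{Hom}(\pi_1(M), U(1))$. Hence I reduce the problem to identifying $\mathrm{Hom}(\pi_1(M), U(1))$ with $H^1(M;\R)/H^1(M;\Z)$.

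Next I would exploit that $U(1) = \R/\Z$ (or $\R/2\pi\Z$; I will fix the normalization $U(1)\cong\R/\Z$), so that $\mathrm{Hom}(\pi_1(M), U(1)) = \mathrm{Hom}(\pi_1(M), \R/\Z)$. Since $\R/\Z$ is abelian, any homomorphism from $\pi_1(M)$ factors through the abelianization $H_1(M;\Z)$, giving $\mathrm{Hom}(H_1(M;\Z), \R/\Z) = H^1(M;\R/\Z)$ by the universal coefficient theorem (the $\mathrm{Ext}$ term vanishes because $\R/\Z$ is divisible, hence injective). Then I would apply the long exact cohomology sequence associated to the short exact sequence of coefficient groups $0 \to \Z \to \R \to \R/\Z \to 0$, namely
\[
H^1(M;\Z) \to H^1(M;\R) \to H^1(M;\R/\Z) \to H^2(M;\Z) \to H^2(M;\R).
\]
For a surface $M$, $H^2(M;\Z) = \Z$ (if closed orientable) injects into $H^2(M;\R) = \R$, so the last map is injective, which forces the connecting map $H^1(M;\R/\Z) \to H^2(M;\Z)$ to be zero; therefore $H^1(M;\R) \to H^1(M;\R/\Z)$ is surjective with kernel the image of $H^1(M;\Z)$. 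Since $H^1(M;\Z)$ is torsion-free for a surface, the image of $H^1(M;\Z) \to H^1(M;\R)$ is a full-rank lattice and may be identified with $H^1(M;\Z)$ itself, yielding $H^1(M;\R/\Z) \cong H^1(M;\R)/H^1(M;\Z)$, which completes the chain of identifications.

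The main obstacle, to the extent there is one, is bookkeeping rather than conceptual: I must make sure the isomorphism is natural and that "line bundles with a flat connection" is being classified up to the correct notion of equivalence (bundle isomorphism intertwining the connections), so that the holonomy picture is faithful; and I should be careful that for surfaces with boundary or punctures the statement still holds with $H^2$ possibly vanishing, in which case the connecting map is automatically zero and the argument is only easier. One should also note the de Rham–theoretic description: a flat connection on the trivial bundle is a closed $1$-form $\alpha$, gauge equivalence shifts $\alpha$ by $d(\log)$ of a $U(1)$-valued function, i.e.\ by a closed $1$-form with integral periods, so the moduli space is visibly $H^1_{dR}(M)/H^1(M;\Z) = H^1(M;\R)/H^1(M;\Z)$; this gives an alternative, more hands-on proof that I would mention as a remark. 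I would present the cohomological argument as the main line and keep the de Rham description as the concrete model that will be used later when gluing flat bundles with the local data from the Weil integrality lemma.
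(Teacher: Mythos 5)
Your proposal is correct and takes essentially the same route the paper indicates: the paper offers no detailed proof of Theorem \ref{theo:flat}, saying only that flat Hermitian line bundles are completely specified by their holonomy representations, which is precisely your starting point. Your completion of the argument --- identifying $\mathrm{Hom}(\pi_1(M),U(1))$ with $H^1(M;\mathbb{R}/\mathbb{Z})$ and then with $H^1(M;\mathbb{R})/H^1(M;\mathbb{Z})$ via the Bockstein sequence for $0\to\mathbb{Z}\to\mathbb{R}\to\mathbb{R}/\mathbb{Z}\to 0$ --- is a standard and correct way to fill in the details (note that the injectivity of $H^2(M;\mathbb{Z})\to H^2(M;\mathbb{R})$ in the closed case uses orientability, which is automatic here since $M$ carries a translation structure).
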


We are now ready to introduce the main result of this section; we follow the arguments in \cite[Proposition 8.3.1]{quant} but allow for singularities on the symplectic form considered.

\begin{theorem}
\label{theo:prequant}
Let $(M,\omega)$ be a translation surface and $\eta := \frac{i}{2}(\omega \wedge\overline{\omega})$ be its corresponding symplectic form. Then, the following statements are equivalent:
\begin{enumerate}
    \item There exists a Hermitian line bundle on $M$ with a connection of curvature $\hbar^{-1} \eta$.
    \item Weil's integrality condition holds, i.e.,
    \begin{equation}\label{eq:weil}
    (2\pi\hbar)^{-1}[\eta] \in H^2(M;\mathbb{Z}) \subseteq H^2(M;\mathbb{R}).
    \end{equation}
\end{enumerate}
Furthermore, when any of these conditions hold, tensoring with (isomorphism classes of) Hermitian line bundles on $M$ with a flat connection gives a $1$-to-$1$ correspondence
 \[
    \left\lbrace \begin{array}{c}
         \text{Isomorphism classes of} \\
         \text{Hermitian line bundles on $M$} \\
         \text{with a connection of curvature $\hbar^{-1} \eta$}
    \end{array}
    \right\rbrace \longleftrightarrow H^1(M;\mathbb{R})/H^1(M;\mathbb{Z}).
\]
\end{theorem}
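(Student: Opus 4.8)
The plan is to translate both the existence and the classification statements into the \v{C}ech language of transition functions and local connection forms, where Lemma~\ref{lem:weil} and Theorem~\ref{theo:flat} do essentially all the work. For the implication $(1)\Rightarrow(2)$, I would invoke Chern--Weil theory: the curvature of a Hermitian connection represents a fixed nonzero multiple of the first Chern class, so unwinding the normalization conventions of Definition~\ref{def:hcb} one gets $(2\pi\hbar)^{-1}[\eta]=c_1(L)$ in $H^2(M;\mathbb{R})$. Since $M$ is a compact (connected) surface, $H^2(M;\mathbb{Z})\to H^2(M;\mathbb{R})$ is injective, so $(2\pi\hbar)^{-1}[\eta]$ lies in the image of the integral cohomology, which is precisely Weil's condition \eqref{eq:weil}. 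Here one should remark that, although $\eta=\tfrac{i}{2}(\omega\wedge\overline\omega)$ degenerates at the zeroes of $\omega$, it is nonetheless a genuine smooth closed $2$-form on $M$, so de Rham and Chern--Weil arguments apply verbatim; this is the only place the ``allowing singularities'' caveat relative to \cite{quant} needs to be addressed.

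For $(2)\Rightarrow(1)$, I would fix a contractible open cover $\mathcal U$ of $M$ (it exists by Remark~\ref{rem:opencovers}) and apply Lemma~\ref{lem:weil} to obtain local primitives $\theta_U$ with $d\theta_U=\eta$, functions $\varphi_{UV}$ with $d\varphi_{UV}=\theta_U-\theta_V$, and integer constants $f_{UVW}$. Then set $g_{UV}:=\exp(i\hbar^{-1}\varphi_{UV})\colon U\cap V\to U(1)$; the cocycle identity $g_{UV}g_{VW}g_{WU}=\exp(2\pi i f_{UVW})=1$ holds by part~(3), so the $\{g_{UV}\}$ glue to a Hermitian line bundle $L$, with the local frames declared unit-norm. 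Equip $L$ over each $U$ with the connection $1$-form $i\hbar^{-1}\theta_U$: this is well defined because $i\hbar^{-1}\theta_U-i\hbar^{-1}\theta_V=i\hbar^{-1}d\varphi_{UV}=g_{UV}^{-1}dg_{UV}$ by part~(2), it is compatible with the Hermitian structure because $|g_{UV}|\equiv 1$, and its curvature is $d(i\hbar^{-1}\theta_U)=i\hbar^{-1}\eta$, i.e.\ curvature $\hbar^{-1}\eta$ in the convention of Definition~\ref{def:hcb}, by part~(1). This produces the desired bundle.

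For the classification statement, fix the bundle $L_0$ just constructed as a basepoint. I would check that for any flat Hermitian line bundle $F$ the tensor product $L_0\otimes F$ again has curvature $\hbar^{-1}\eta$, and conversely that if $L$ has curvature $\hbar^{-1}\eta$ then $L\otimes L_0^{-1}$ carries the induced flat connection; since these operations are mutually inverse on isomorphism classes, the set of isomorphism classes of Hermitian line bundles with connection of curvature $\hbar^{-1}\eta$ is a torsor under the group (under $\otimes$) of isomorphism classes of flat Hermitian line bundles. By Theorem~\ref{theo:flat} the latter group is $H^1(M;\mathbb{R})/H^1(M;\mathbb{Z})$, and trivializing the torsor via the basepoint $L_0$ gives the asserted bijection.

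The substantive content is hidden in Lemma~\ref{lem:weil} (the de Rham--\v{C}ech dictionary), which is quoted, so beyond that the delicate points are bookkeeping rather than conceptual: (a) pinning down the $2\pi i$ and $\hbar$ factors so that ``curvature $\hbar^{-1}\eta$'' in Definition~\ref{def:hcb} matches both the Chern--Weil normalization in $(1)\Rightarrow(2)$ and the exponentials $\exp(i\hbar^{-1}\varphi_{UV})$ in $(2)\Rightarrow(1)$ — consistency being checked against the holonomy formula \eqref{eq:curv}; and (b) phrasing the torsor argument for the correct notion of morphism (Hermitian- and connection-preserving isomorphism), so that ``the difference of two such bundles is flat'' is simultaneously well defined on isomorphism classes and accounts for \emph{all} of the ambiguity, making the fibers of the correspondence exactly the flat twists.
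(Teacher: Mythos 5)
Your proposal is correct and follows essentially the same route as the paper: Lemma~\ref{lem:weil} plus the transition functions $\exp(i\varphi_{UV}/\hbar)$ and local connection forms $\theta_U/\hbar$ for $(2)\Rightarrow(1)$, and the torsor-under-flat-twists argument with Theorem~\ref{theo:flat} for the classification. The only cosmetic difference is that for $(1)\Rightarrow(2)$ you cite Chern--Weil theory, whereas the paper writes out the underlying \v{C}ech $2$-cocycle $z_{UVW}$ built from logarithms of the transition functions and invokes the \v{C}ech--de Rham comparison explicitly; these are the same argument at different levels of detail, and your remark that $\eta$ is a genuine smooth closed $2$-form despite its zeros correctly disposes of the only point where the singular setting could have caused trouble.
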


\begin{remark}\label{rem:affine}
    The $1$-to-$1$ correspondence in Theorem \ref{theo:prequant} should not be thought of as a linear identification but rather as an affine one. This general theme will be present throughout the rest of the paper; see for instance Theorem \ref{theo:skew2}.
\end{remark}

\begin{proof}
    Suppose first that Weil's integrality condition holds. Consider a contractible open cover $\mathcal{U}$ of $M$. Let $\theta_U$ and $\varphi_{UV}$ be the $1$-forms and functions provided by Lemma \ref{lem:weil} for the closed $1$-form $\eta$ on $M$. For every non-empty intersection $U \cap V \neq \emptyset$ of elements $U,V \in \mathcal{U}$ consider the complex valued functions of modulus $1$ given by
    \[
    c_{UV} := \exp(i\varphi_{UV}/\hbar).
    \]
    Condition (3) in Lemma \ref{lem:weil} precisely guarantees that, for every non-empty intersection $U\cap V\cap W \neq \emptyset$ of elements $U,V,W \in \mathcal{U}$,
    \[
    c_{UV} c_{VW} c_{WU} = \exp(2\pi i (\varphi_{UV} + \varphi_{VW} + \varphi_{WU})/2\pi\hbar) = 1.
    \]
    In particular, the functions $c_{UV}$ are the transition functions of a complex line bundle $L \to M$, which, furthermore, because the functions $c_{UV}$ have modulus $1$, admits a Hermitian structure. Let $U \times \mathbb{C}$ be a local trivialization of $L$ over $U \in \mathcal{U}$. Denote by $s \colon U \to L$ the local section of $L$ on $U$ corresponding to fixing $1 \in \mathbb{C}$ as the second coordinate of the local trivialization $U \times \mathbb{C}$. Then, a connection $\nabla$ of $L$ on $U$ is completely determined by a complex valued $1$-form $\theta$ on $U$ through the equation
    \begin{equation}
    \label{eq:con}
    \nabla_\cdot s = -i \theta(\cdot) s.
    \end{equation}
    In this local representation, the curvature $2$-form of $\nabla$ on $U$ is given by $d\theta$. For each $U \in \mathcal{U}$, let $\theta$ in \eqref{eq:con} be $\theta_U/\hbar$. The condition that these local connections glue to a global connection on $L$ is precisely given by
    \begin{equation}
    \label{eq:glue}
    dc_{UV}/c_{UV} = i(\theta_U - \theta_V)/\hbar, \quad \text{for every } U,V \in \mathcal{U} \text{ with } U\cap V \neq\emptyset.
    \end{equation}
    This condition can be checked directly from condition (2) in Lemma \ref{lem:weil}. Thus, we obtain the connection $\nabla$ on $L$. Condition (1) in Lemma \ref{lem:weil} guarantees that the curvature of this connection is $\hbar^{-1} \eta$. Finally, the fact that the $1$-forms $\theta_U$ are real valued guarantees that the connection $\nabla$ is compatible with the Hermitian structure on $L$.  

    Suppose, conversely, that there exists
    a Hermitian line bundle $L \to M$ with a connection $\nabla$ of curvature $\hbar^{-1}\eta$. Let $\mathcal{U}$ be a contractible open cover of $M$. For every non-empty intersection $U \cap V \neq \emptyset$ of elements $U,V \in \mathcal{U}$, denote by $c_{UV}$ the corresponding transition function of $L$. For every non-empty intersection $U\cap V \cap W \neq \emptyset$ of elements $U,V,W \in \mathcal{U}$, consider the complex valued function on $U \cap V \cap W$ given by
    \begin{equation}
    \label{eq:z}
    z_{UVW} := (\log c_{UV} + \log c_{VW} + \log c_{WU})/2\pi i,
    \end{equation}
    where the logarithms are computed using arbitrary branches; in a moment we explain the choices of branches are not important for the argument that follows. The consistency condition for the transition functions $c_{UV}$ guarantees the functions $z_{UVW}$ are integer valued, and, in particular, because they are continuous, constant. Thus, we can consider $z$ as a \v{C}ech $2$-cochain. A direct computation shows that this cochain is actually a $2$-cocycle. Although the definition of $z$ in \eqref{eq:z} depends on the choices of logarithm branches, its cohomology class $[z] \in H^2(M;\mathbb{Z})$ does not. This is the so-called \textit{first Chern class} of $L$. For every $U \in \mathcal{U}$ denote by $\theta_U$ the real valued $1$-form that determines the connection $\nabla$ on a trivialization of $L$ on $U$ according to \eqref{eq:con}. The curvature condition on $\nabla$ guarantees $d\theta_U = \hbar^{-1}\eta$ for every $U \in \mathcal{U}$. Furthermore, for every non-empty intersection $U \cap V \neq \emptyset$ of elements $U,V \in \mathcal{U}$, the glueing condition in \eqref{eq:glue} ensures
    \[
    d \log c_{UV}/2\pi i = dc_{UV}/2\pi i c_{UV} = (\theta_U - \theta_V)/2\pi.
    \]
    By the standard proof of the isomorphism between \v{C}ech and de Rham cohomology, it follows that $z$ is a representative cocycle of the class in $H^2(X;\mathbb{R})$ determined by $(2\pi \hbar)^{-1}\eta$. As $[z] \in H^2(M;\mathbb{Z})\subseteq H^2(M;\mathbb{R})$, Weil's integrality condition holds.

    Now notice that if $L \to M$ and $F \to M$ are Hermitian line bundles endowed with connections of curvature $\hbar^{-1} \eta$ and zero, i.e., flat, respectively, then $F \otimes L$ is a Hermitian line bundle on $M$ with a connection of curvature $\hbar^{-1} \eta$. Conversely, if $L \to M$ and $L' \to M$ are Hermitian line bundles endowed with connections of curvature $\hbar^{-1} \eta$, then $L' = L \otimes F$ with $F = L^{-1} \otimes L'$ a Hermitian line bundle on $M$ with a flat connection. The desired $1$-to-$1$ correspondence then follows from Theorem \ref{theo:flat}.
\end{proof}

\begin{remark}
    In the case of surfaces, the proof of the necessity of Weil's integrality condition in Theorem \ref{theo:prequant} can be reduced to an application of the curvature relation \eqref{eq:curv} over a fundamental polygon. 
\end{remark}



\begin{remark}\label{rem:chern}
    Let $M$ be a surface. The first Chern class $[z] \in H^2(M;\mathbb{Z})$ of a line bundle $L \to M$ is a complete invariant of its isomorphism class, i.e., via the first Chern class we get a $1$-to-$1$ correspondence
    \[
    \left\lbrace \begin{array}{c}
         \text{Isomorphism classes of} \\
         \text{line bundles on $M$}
    \end{array}
    \right\rbrace \longleftrightarrow H^2(M;\mathbb{Z}).
    \]
    In particular, the proof of Theorem \ref{theo:prequant} shows that, when Weil's integrality condition holds, all Hermitian line bundle on $M$ with a connection of curvature $\hbar^{-1} \eta$ are isomorphic as line bundles (without a Hermitian structure and a connection), because their first Chern class is always $(2\pi\hbar)^{-1}[\eta] \in H^2(M;\mathbb{Z})$. Nevertheless, the discussion that follows relies crucially on the Hermitian structure and the connection on these line bundles, so the more refined correspondence provided by Theorem \ref{theo:prequant} is needed.
\end{remark}

\section{Heisenberg translation flows}\label{sec:heisenberg:flow}

We now introduce natural lifts of translation flows to Heisenberg circle bundles. We show these systems are relatively mixing with linear, hence square-integrable, decay of correlations. As an application we show that several dynamical properties of the base translation surfaces are inherited by the lifted flows. 

\subsection{Heisenberg translation flows.} Let $(M,\omega)$ be a translation surface. Unless otherwise stated, we will denote by $X$ and $Y$ the vertical and horizontal vector fields on $M$ induced by $\omega$, respectively. The corresponding flows will be denoted by
\[
\phi^X := \left\lbrace \phi^X_t \colon M \to M \right\rbrace_{t \in \mathbb{R}} \quad \text{and} \quad \phi^Y := \left\lbrace \phi^Y_t \colon M \to M \right\rbrace_{t \in \mathbb{R}}.
\]
For a general linear vector field $W = aX+bY$ (with $(a,b) \in \R^2\setminus \{(0,0)\}$), we write \[
\phi^W := \left\lbrace \phi^W_t \colon M \to M \right\rbrace_{t \in \mathbb{R}}.
\]

\begin{definition}
    Let $(M,\omega)$ be a translation surface and $C \to M$ be a Heisenberg circle bundle. The flows $\phi^X$ and $\phi^Y$ on $M$ lift via parallel transport to flows
    \[
    \Phi^{\widehat{X}} := \left\lbrace \Phi^{\widehat{X}}_t \colon C \to C \right\rbrace_{t \in \mathbb{R}} \quad \text{and} \quad \Phi^{\widehat{Y}} := \left\lbrace \Phi^{\widehat{Y}}_t \colon C \to C \right\rbrace_{t \in \mathbb{R}}
    \]
   on $C$ we refer to as \textit{Heisenberg translation flows}. We denote the vector fields corresponding to these flows by $\smash{\widehat{X}}$ and $\smash{\widehat{Y}}$ and the unit vector field in the fiber direction by $\smash{\widehat{R}}$. Given a general linear vector field $\smash{\widehat{W}} = a\widehat{X}+b\widehat{Y}+c\widehat{R}$, the associated flow 
   \[
   \Phi^{\widehat{W}} := \left\lbrace \Phi^{\widehat{W}}_t \colon C \to C \right\rbrace_{t \in \mathbb{R}}
   \]
   projects to the flow  $\phi^W$ on $M$, where $W = aX+bY$.
\end{definition}

\subsection{Heisenberg relations.} The following result shows that the Heisenberg commutation relations, compare to \eqref{eq:heisrel}, hold in this context; implicitly, this is one of the main mechanism behind the proofs of this paper.

\begin{proposition}
    \label{prop:bracket}
    Let $(M,\omega)$ be a translation surface and $C \to M$ be a Heisenberg circle bundle. Then, the Heisenberg commutation relations hold:
    \begin{equation}\label{eq:heiscomgen}
    \left[ \widehat{X},\widehat{Y} \right] = \widehat{R}, \quad \left[ \widehat{X},\widehat{R} \right] = 0, \quad \left[ \widehat{Y},\widehat{R} \right] = 0.
    \end{equation}
\end{proposition}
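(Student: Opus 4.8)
The plan is to compute the three brackets directly in a local trivialization of the circle bundle over an open set $U$ where the connection $1$-form is $\theta_U/\hbar$ as in the proof of Theorem~\ref{theo:prequant}. Over such a $U$ with coordinate $z = x+iy$ (so that $\omega = dz$ and $\eta = dx\wedge dy$), the circle bundle is $U \times \R/2\pi\Z$ with fiber coordinate, say, $\rho$. The lifted vector fields $\widehat X$, $\widehat Y$ are the horizontal lifts of $X = \partial_x$, $Y = \partial_y$ determined by the connection, and $\widehat R = \partial_\rho$ is the generator of the fiber rotation. Writing $\theta_U = p\,dx + q\,dy$ with $dp/dy - dq/dx$ a primitive of $\eta$ — concretely one may take $\theta_U = -y\,dx$ or the symmetric gauge $\theta_U = \tfrac12(x\,dy - y\,dx)$ — the horizontal lift of a vector $v$ is $v - \hbar^{-1}\theta_U(v)\,\partial_\rho$. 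So in the symmetric gauge $\widehat X = \partial_x + \tfrac{y}{2\hbar}\partial_\rho$ and $\widehat Y = \partial_y - \tfrac{x}{2\hbar}\partial_\rho$, up to signs and an overall normalization of $\widehat R$ absorbed into its definition as the \emph{unit} fiber field.

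First I would record that $\widehat R$ is central: since $\widehat R = \partial_\rho$ has constant coefficients and the coefficients of $\widehat X$, $\widehat Y$ do not depend on $\rho$, the brackets $[\widehat X, \widehat R]$ and $[\widehat Y, \widehat R]$ vanish identically in this trivialization, hence globally. This handles the last two relations. For the first, I would compute $[\widehat X, \widehat Y]$ coordinatewise: the $\partial_x$ and $\partial_y$ components commute, and the $\partial_\rho$ components contribute $\widehat X(\text{coeff of }\partial_\rho\text{ in }\widehat Y) - \widehat Y(\text{coeff of }\partial_\rho\text{ in }\widehat X)$, which in the symmetric gauge is $\partial_x(-\tfrac{x}{2\hbar}) - \partial_y(\tfrac{y}{2\hbar}) = -\tfrac{1}{\hbar}$. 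Thus $[\widehat X, \widehat Y] = -\hbar^{-1}\partial_\rho$, and since $\widehat R$ is by definition the appropriately normalized unit fiber vector field, the sign and scaling conventions are fixed so that $[\widehat X, \widehat Y] = \widehat R$. More invariantly, the same statement follows from the general formula for the bracket of horizontal lifts on a principal bundle: $[\widehat X, \widehat Y] = \widehat{[X,Y]} - \hbar^{-1}\eta(X,Y)\,\partial_\rho$, and here $[X,Y] = 0$ while $\eta(X,Y) = \eta(\partial_x,\partial_y) = 1$, giving the curvature term directly. I would present this invariant computation as the main line of argument and mention the local coordinate check as confirmation.

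The only genuine subtlety — and the step I expect to require the most care — is bookkeeping of signs and normalizations so that the relations come out exactly as in \eqref{eq:heiscomgen} rather than up to a sign or a factor of $\hbar$: one must be consistent about whether $\widehat R$ is the coordinate field $\partial_\rho$ or the unit-length fiber field, about the sign convention in $\nabla_\cdot s = -i\theta(\cdot)s$ versus the curvature $= d\theta$, and about the orientation of the fiber. I would fix these conventions once at the start (matching those of Definition~\ref{def:hcb} and the proof of Theorem~\ref{theo:prequant}) and then let the curvature-term formula do the work; the identity $[\widehat X,\widehat Y] = \widehat{[X,Y]} - (\text{curvature})(X,Y)\widehat R$ is the conceptual content, and everything else is the routine verification that the local connection forms of Lemma~\ref{lem:weil} patch the horizontal distribution together globally, which is already guaranteed by Theorem~\ref{theo:prequant}.
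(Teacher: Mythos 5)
Your argument is correct, and the mechanism it isolates --- the vertical component of the bracket of horizontal lifts equals the curvature evaluated on the projected fields --- is exactly what the proposition encodes; but your route differs from the paper's. The paper avoids local trivializations entirely: it uses the characterization of the Lie bracket as $\tfrac12\tfrac{d^2}{dt^2}\big|_{t=0}$ applied to the commutator of the flows $\Phi^{\wX}_t$ and $\Phi^{\wY}_t$, and reads off from the holonomy relation \eqref{eq:curv} that this commutator path returns to the same base point with the fiber coordinate shifted by the enclosed area $t^2$, giving $[\wX,\wY]=\wR$ in three lines (the last two relations being the statement that the $U(1)$-action commutes with parallel transport). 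You instead write the horizontal lifts explicitly in a trivialization as $v-\hbar^{-1}\theta_U(v)\,\partial_\rho$ using the local connection forms from the proof of Theorem~\ref{theo:prequant}, compute the brackets of the resulting coordinate fields, and back this up with the invariant identity $[\wX,\wY]=\widehat{[X,Y]}-\hbar^{-1}\eta(X,Y)\,\partial_\rho$. Your version makes the $\hbar$- and sign-dependence visible, and you are right that this is where all the care lies: as literally written your symmetric-gauge computation lands on $-\hbar^{-1}\partial_\rho$, and turning that into $\wR$ on the nose is a statement about how the normalization of the unit fiber field, the orientation conventions, and the $2\pi$'s in \eqref{eq:curv} are chosen to match (the paper is itself loose on exactly these points, so this is not a gap in your argument so much as a convention to be fixed once). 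The paper's version buys brevity and coordinate-freeness at the cost of being terse about why the flow commutator displaces the fiber by exactly $t^2$; yours buys explicitness and a transparent link to the connection forms of Lemma~\ref{lem:weil}.
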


\begin{proof}
    We only prove the first relation; the other two relations follow by similar arguments. Fix a point $m \in M$ and identify the fiber of $C_m$ over $m$ with $\mathbb{R}/\mathbb{Z}$. Then, denoting points on $C_m$ by $(m,\rho)$ with $\rho \in \mathbb{R}/\mathbb{Z}$ and using the curvature relation \eqref{eq:curv} we get
    \begin{align*}
    \left[\widehat{X},\widehat{Y}\right](m,\rho) &= \frac{1}{2} \frac{d^2}{dt^2}\bigg|_{t=0} (\Phi^{\wY}_{-t} \circ \Phi^{\wX}_{-t} \circ \Phi^{\wY}_{t} \circ \Phi^{\wX}_{t})(m,\rho) \\
    &= \frac{1}{2} \frac{d^2}{dt^2}\bigg|_{t=0} (m,\rho+t^2)\\
    &= \widehat{R}. \qedhere
    \end{align*}
\end{proof}

\subsection{Relative mixing.}\label{sec:relmix} Given a Heisenberg circle bundle $C$, we will denote by $\mu$ the measure on $C$ that disintegrates as $\eta$ on the base translation surface $(M, \omega)$ and as the Lebesgue probability measures on the circle fibers $C_m$. This measure is invariant under the natural $U(1)$ action on $C$, i.e., invariant under the flow $\smash{\Phi^{\wR}}$. We are interested in the decay of correlations of Heisenberg translation flows $\smash{\Phi^{\wW}}$ with respect to the measure $\mu$. Because correlations on a translation surface need not necessarily decay, indeed, the linear flows $\phi^{W}$ on $(M, \omega)$ are never mixing, we formulate this problem in a relative sense. 

\begin{definition}
    Let $(M,\omega)$ be a translation surface, $\eta:= \frac{i}{2}(\omega \wedge \overline{\omega})$ the corresponding symplectic  form, and $C \to M$ a Heisenberg circle bundle. We denote by $L^2(C) = L^2(C, \mu)$ the space of $\mu$-square-integrable functions on $C$, by $L^2_*(C) \subseteq L^2(C)$ the subspace of functions that integrate to zero along $\eta$-almost-every fiber, and we identify $L^2(M)$ with the subspace of functions of $L^2(C)$ that are constant along fibers. In particular, 
    \[
    L^2(M) \perp L^2_*(C) \quad \text{and} \quad L^2(C) = L^2(M) \oplus L^2_*(C).
    \]
\end{definition}

\paragraph*{\bf Sobolev spaces} As we are interested in quantitative results for the decay of correlations, we take care to define appropriate spaces of smooth observables. 

\begin{definition}
    Let $(M,\omega)$ be a translation surface and $C \to M$ be a Heisenberg circle bundle. For every $\alpha,\beta \in \mathbb{N}$ we denote by $H^{\alpha,\beta}_\omega(C) \subseteq L^2(C)$ the Sobolev space of functions $f \in L^2(C)$ such that
    \[
    \widehat{R}^k.\widehat{Y}^j.\widehat{X}^i.f \in L^2(C) \quad \text{for all $i+j \leq \alpha$ and $k \leq \beta$}.
    \]
\end{definition}

\paragraph*{\bf Fourier modes} As discussed above, any Heisenberg circle bundle $C \to M$ is a $U(1)$-bundle and the $U(1)$-action corresponds to the flow 
\[
\Phi^{\wR} := \{\Phi_t^{\wR} \colon C \to C\}_{t \in \mathbb{R}}
\]
generated by the vector field $\smash{\widehat{R}}$. This flow preserves the measure $\mu$ on $C$. In particular, this allows us to decompose $L^2(C)$ into Fourier modes:

\begin{definition}
    Let $(M,\omega)$ be a translation surface and $C \to M$ be a Heisenberg circle bundle. For every $n \in \mathbb{Z}$ define
    \[
    E_n := \{f \in L^2(C) \ | \ f \circ \Phi^{\wR}_t = e^{2\pi i n t} f\}.
    \]
    Then, we have an on orthogonal splitting
    \begin{equation}
    \label{eq:fourieraa}
    L^2(C) = \bigoplus_{n \in \mathbb{Z}} E_n.
    \end{equation}
    The projections $\pi_n \colon L^2(C) \to E_n$ of this splitting are given by
    \[
    f \in L^2(C) \mapsto \pi_n(f) := \int_{0}^1 e^{-2\pi i n t} (f \circ \Phi^{\wR}_t) \thinspace dt \in E_n.
    \]
    In particular,
    \[
    E_0 = L^2(M) \quad \text{and} \quad \pi_0(f) = \int_{0}^1 f \circ \Phi^{\wR}_t \thinspace dt.
    \]
\end{definition}

\begin{remark}   Proposition \ref{prop:bracket} guarantees the decomposition in \eqref{eq:fourieraa} is invariant with respect to $\Phi^{\wW}$ for any linear vector field $\wW = a\wX + b\wY+ c\wR$.
\end{remark}

\paragraph*{\bf Main result.} We are now ready to state and prove the main result of this section: 

\begin{theorem}
    \label{thm:rel_mix_new}
    Let $(M,\omega)$ be a translation surface and $C \to M$ be a Heisenberg circle bundle. Suppose
    \[
    \widehat{W} := a \widehat{X} + b \widehat{Y} + c \widehat{R}, \quad \text{with } a,b,c \in \mathbb{R}.
    \]
    Then, if $(a,b)\not =(0,0)$, the flow $\Phi^{\wW}$ is relatively mixing with respect to the projection $C \to M$, i.e., for every $f \in L^2_*(C)$ and every $g \in L^2(C)$,
    \[
    \lim_{t \to \pm \infty} \left\langle f, g \circ \Phi^{\wW}_t \right\rangle_{L^2(C)} = 0.
    \]
     Furthermore, for every $\alpha \in \N\setminus \{0\}$ there exist a constant $K_\alpha>0$ such that for every $f \in L^2_*(C) \cap H^{\alpha,0}_{\omega}(C)$, every $g\in H^{\alpha,0}_{\omega}(C)$, and every $t\in \R$,
     $$
     \bigg\vert \left\langle f, g  \circ \Phi^{\wW}_t\right\rangle_{L^2(C)} \bigg\vert \leq \frac{K_\alpha}{(1+\vert t\vert)^\alpha} \Vert f \Vert_{H^{\alpha,0}_{\omega}(C)} \Vert g \Vert_{H^{\alpha,0}_{\omega}(C)}.
     $$
     In particular, the correlation functions are square integrable on $\R$.
\end{theorem}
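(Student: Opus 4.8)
The plan is to convert the Heisenberg commutation relations of Proposition~\ref{prop:bracket} into a ``shearing'' identity for correlations on each fiber--Fourier component $E_n$, and then to bootstrap it by repeated differentiation.

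\smallskip
\noindent\emph{Step 1: reduction to a single Fourier mode.} Since $\widehat X$ and $\widehat Y$ commute with $\widehat R$, their pullback operators and that of $\Phi^{\widehat W}$ preserve each $E_n$, hence the orthogonal splitting $L^2(C)=\bigoplus_{n\in\Z}E_n$, with $E_0=L^2(M)$ and $L^2_*(C)=\bigoplus_{n\neq 0}E_n$; moreover $\widehat X,\widehat Y$ preserve each $E_n$, so $\|f\|_{H^{\alpha,0}_\omega(C)}^2=\sum_n\|\pi_n f\|_{H^{\alpha,0}_\omega(C)}^2$. It therefore suffices to prove the quantitative estimate for $f,g\in E_n\cap H^{\alpha,0}_\omega(C)$ with $n\neq 0$ and a constant \emph{independent of $n$}, and then sum over $n$ by Cauchy--Schwarz. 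By density of smooth functions in $H^{\alpha,0}_\omega(C)$ we may assume $f,g$ smooth; the qualitative statement (with $g\in L^2(C)$ arbitrary) then follows by an $\varepsilon/3$ approximation argument from the quantitative one with $\alpha=1$.

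\smallskip
\noindent\emph{Step 2: the shearing identity.} Set $\widehat W^{\perp}:=-b\widehat X+a\widehat Y$. A bilinearity computation from \eqref{eq:heiscomgen} gives $[\widehat W,\widehat W^{\perp}]=(a^2+b^2)\widehat R=:\lambda\widehat R$ with $\lambda>0$ (this is where $(a,b)\neq(0,0)$ is used), while $\widehat R$ stays central. Just as in Proposition~\ref{prop:bracket}, the curvature relation \eqref{eq:curv} applied to the parallelogram swept out by $\Phi^{\widehat W}_t$ and $\Phi^{\widehat W^{\perp}}_s$ shows that, wherever the flows are defined (hence $\mu$-a.e.),
\[
\Phi^{\widehat W^{\perp}}_{-s}\circ\Phi^{\widehat W}_{t}\circ\Phi^{\widehat W^{\perp}}_{s}=\Phi^{\widehat W}_{t}\circ\Phi^{\widehat R}_{\pm\lambda st}.
\]
Pulling back functions, using that $\Phi^{\widehat W^\perp}_{-s}$ preserves $\mu$ and that $\Phi^{\widehat R}_r$ acts on $E_n$ as multiplication by $e^{2\pi i n r}$, we obtain for all $s$ and all $f,g\in E_n$
\[
\big\langle f\circ\Phi^{\widehat W^{\perp}}_{-s},\ \big(g\circ\Phi^{\widehat W^{\perp}}_{-s}\big)\circ\Phi^{\widehat W}_{t}\big\rangle_{L^2(C)}=e^{\pm 2\pi i n\lambda s t}\,\big\langle f,\ g\circ\Phi^{\widehat W}_{t}\big\rangle_{L^2(C)}.
\]
Differentiating at $s=0$ yields the \emph{energy identity}
\[
2\pi i n\lambda t\,\big\langle f, g\circ\Phi^{\widehat W}_{t}\big\rangle_{L^2(C)}=\pm\Big(\big\langle \widehat W^{\perp}f,\ g\circ\Phi^{\widehat W}_{t}\big\rangle_{L^2(C)}+\big\langle f,\ (\widehat W^{\perp}g)\circ\Phi^{\widehat W}_{t}\big\rangle_{L^2(C)}\Big).
\]

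\smallskip
\noindent\emph{Step 3: iteration and bookkeeping of the central direction.} Iterating the energy identity $\alpha$ times, distributing $\widehat W^{\perp}$ over the two slots of the bilinear form $(p,q)\mapsto\langle p,q\circ\Phi^{\widehat W}_t\rangle_{L^2(C)}$, gives
\[
(2\pi i n\lambda t)^{\alpha}\,\big\langle f, g\circ\Phi^{\widehat W}_{t}\big\rangle_{L^2(C)}=\pm\sum_{j=0}^{\alpha}\binom{\alpha}{j}\big\langle (\widehat W^{\perp})^{j}f,\ \big((\widehat W^{\perp})^{\alpha-j}g\big)\circ\Phi^{\widehat W}_{t}\big\rangle_{L^2(C)}.
\]
The operator $(\widehat W^{\perp})^{j}$ is a differential operator of degree $j$ in $\widehat X,\widehat Y,\widehat R$; reducing all occurrences of $[\widehat X,\widehat Y]$ to $\widehat R$ and using $\widehat R|_{E_n}=2\pi i n\cdot\mathrm{Id}$, it acts on $E_n$ as $\sum_{l=0}^{\lfloor j/2\rfloor}(2\pi i n)^{l}D_{j,l}$, where each $D_{j,l}$ is a differential operator of order $j-2l$ in $\widehat X,\widehat Y$ alone. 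Hence $\|(\widehat W^{\perp})^{j}f\|_{L^2(C)}\le C_{j}\,|n|^{\lfloor j/2\rfloor}\,\|f\|_{H^{\alpha,0}_\omega(C)}$ for $|n|\ge 1$ and $j\le\alpha$, so the right-hand side above is bounded by $C_{\alpha}\,|n|^{\lfloor\alpha/2\rfloor}\,\|f\|_{H^{\alpha,0}_\omega(C)}\|g\|_{H^{\alpha,0}_\omega(C)}$. Dividing by $|2\pi\lambda n t|^{\alpha}$ and absorbing the surplus powers of $|n|$ (there are at most $\lfloor\alpha/2\rfloor\le\alpha$ of them, leaving a harmless factor $|n|^{-\lceil\alpha/2\rceil}\le 1$) gives, uniformly in $n\neq 0$,
\[
\big|\big\langle f, g\circ\Phi^{\widehat W}_{t}\big\rangle_{L^2(C)}\big|\le \frac{C_{\alpha}'}{|t|^{\alpha}}\,\|f\|_{H^{\alpha,0}_\omega(C)}\|g\|_{H^{\alpha,0}_\omega(C)};
\]
combining with the trivial bound $|\langle f,g\circ\Phi^{\widehat W}_t\rangle_{L^2(C)}|\le\|f\|_{L^2}\|g\|_{L^2}$ for $|t|\le 1$ yields the desired $(1+|t|)^{-\alpha}$ estimate on each $E_n$. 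Finally, writing $f=\sum_{n\neq 0}f_n$, $g=\sum_n g_n$, using invariance of the splitting to discard cross terms, and applying Cauchy--Schwarz together with $\|f\|_{H^{\alpha,0}_\omega(C)}^2=\sum_n\|f_n\|_{H^{\alpha,0}_\omega(C)}^2$, gives the theorem; square integrability of $t\mapsto(1+|t|)^{-\alpha}$ for $\alpha\ge 1$ gives the final assertion.

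\smallskip
The main point needing care is the global, $\mu$-a.e., validity of the shearing identity of Step~2, since the horizontal and vertical flows are only defined off the zeros of $\omega$; this is precisely the situation already dealt with (in its infinitesimal form) in Proposition~\ref{prop:bracket}, and $\mu$-a.e. validity is all that the $L^2$ identity requires. The only other delicate point, which in the end costs nothing, is the bookkeeping in Step~3 of the powers of $n$ produced by the commutators $[\widehat X,\widehat Y]=\widehat R$: since at most $\lfloor\alpha/2\rfloor$ of them ever appear against $|n|^{\alpha}$ in the denominator, uniformity in the Fourier mode $n$ — hence summability over $n$ — is automatic, with no cancellation between the two slots required.
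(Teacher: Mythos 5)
Your argument is correct and rests on the same mechanism as the paper's proof --- the Heisenberg relation forces $d\Phi^{\wW}_t$ to shear a transverse translation direction into the central direction by an amount proportional to $t$, which on the Fourier mode $E_n$ becomes a factor $2\pi i n t$ that one then divides out after an integration by parts --- but your packaging of that mechanism is genuinely different and cleaner. The paper follows Marcus: it averages the correlation over a short segment $[0,\delta]$ of the $\wY$-flow, integrates by parts in the segment parameter, solves an ODE for $g\circ\Phi^{\wW}_t\circ\Phi^{\wY}_s$ on $E_n$, and integrates by parts a second time to extract the factor $(2\pi i n t)^{-1}$; higher powers of $t$ are then obtained by an induction that is only sketched. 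You instead derive a single exact ``energy identity'' $2\pi i n\lambda t\,\langle f,g\circ\Phi^{\wW}_t\rangle=\pm\bigl(\langle \widehat W^\perp f,g\circ\Phi^{\wW}_t\rangle+\langle f,(\widehat W^\perp g)\circ\Phi^{\wW}_t\rangle\bigr)$ from the global conjugation relation (equivalently, from skew-adjointness of $\widehat W^\perp$ together with $d\Phi^{\wW}_t(\widehat W^\perp)=\widehat W^\perp+\lambda t\wR$), and iterate it as a Leibniz rule. What this buys: (i) the induction for general $\alpha$ is an explicit binomial identity rather than a sketch; (ii) the constant is visibly uniform in $n$, with the PBW bookkeeping showing the commutator terms cost only $|n|^{\lfloor\alpha/2\rfloor}$ against $|n|^\alpha$ in the denominator, so the summation over Fourier modes via $\Vert f\Vert^2_{H^{\alpha,0}_\omega}=\sum_n\Vert \pi_n f\Vert^2_{H^{\alpha,0}_\omega}$ is clean --- a point the paper leaves implicit; (iii) by working with $\widehat W^\perp=-b\wX+a\wY$ and $[\wW,\widehat W^\perp]=(a^2+b^2)\wR$ you avoid the normalization $a=1$. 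The two points that need (and receive) the same care as in the paper's own argument are the $\mu$-a.e.\ validity of the conjugation identity off the singular fibers, and the density of the Sobolev spaces used to pass from the quantitative bound to the qualitative statement for merely $L^2$ observables; neither is a gap relative to the paper's standard of rigor.
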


\begin{remark}
    In the case of Heisenberg nilflows, i.e., when the base surface is a torus, Theorem \ref{thm:rel_mix_new} is well known and can be proved using the representation theory of the Heisenberg group. We highlight that such strong algebraic tools are not available in the case of general Heisenberg translation flows, so we need to appeal to more general arguments.
\end{remark}

\begin{proof}
Let us first give the intuition behind the proof, following the shearing arguments of Marcus \cite{Marcus}. First, one decomposes the correlation function in question as an integral along segments in a given direction transverse to the flow. Then, as one pushes these segments using the flow, they get sheared along the fiber direction more and more; this phenomena relies crucially on Proposition \ref{prop:bracket} and, thus, on the underlying curvature assumption in the Definition \ref{def:hcb}. Under the hypothesis that the integrals along fibers vanish, the desired correlation function decays to zero, see Figure \ref{fig:shear}.

\begin{figure}[ht]
\centering
\begin{tikzpicture}[scale = 1.5]
  \draw[thick,->](0,0)--(1,0);  
  \draw[thick,->](1,0)--(3,0);  
    \draw[thick](3,0)--(4,0)node[right]{\tiny $X$}; 
    \draw[thick, red](0,0)--(1, 1);
    \draw[thick](1,1)--(1, 2)--(0, 1)node[left]{\tiny $R$}--(0,0);
       \draw[thick](2,0)--(3, 1);
       \draw[thick, red](2,0)--(3,2);
    \draw[thick](3,1)--(3, 2)--(2, 1)--(2,0);
       \draw[thick](4,0)--(5, 1)node[right]{\tiny $Y$};
    \draw[thick](5,1)--(5, 2)--(4, 1)--(4,0);
       \draw[thick, red](4,0)--(4.5,1.5);
              \draw[thick, red](4.5,.5)--(5,2);

\end{tikzpicture}
\caption{The shearing argument.}
\label{fig:shear}
\end{figure}
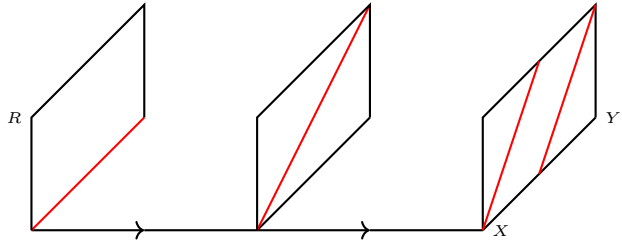

To make this argument precise, consider $f \in L^2(C)$ and $g \in L^2_*(C)$. Without loss of generality we assume that $a \neq 0$ and, after rescaling the vector field, we can further assume that $a =1$, so we have
\begin{equation}
\label{eq:bracket}
\left[\widehat{W}, \widehat{Y}\right] = \left[\widehat{X}, \widehat{Y}\right] = \widehat{R}.
\end{equation}

Since the measure $\mu$ is invariant under the flow $\Phi^Y$, for every $\delta >0$ we have
\begin{align*}
\left\langle f, g \circ \Phi^{\wW}_t \right\rangle_{L^2(C)} &= \delta^{-1} 
\int_0^\delta \langle f\circ \Phi^Y_s , g \circ \Phi^{\wW}_t \circ \Phi^Y_s\rangle \thinspace ds\\ 
&=\delta^{-1} \int_0^\delta \int_{C} (f\circ \Phi^Y_s) \overline{(g \circ \Phi^{\wW}_t \circ \Phi^Y_s)} \thinspace d\mu \thinspace ds \\
&= \delta^{-1}  \int_{C} \int_0^\delta (f\circ \Phi^Y_s) \overline{(g \circ \Phi^{\wW}_t \circ \Phi^Y_s)} \thinspace ds \thinspace d\mu.
\end{align*}

Since the subspace $H^{1,1}_\omega(C) \subseteq L^2(C)$ is dense (this follows from the corresponding density of $H^{1,1}_{M}$ in $L^2(M)$, the Heisenberg relations \eqref{eq:heiscomgen}, and the fact that at the singular points we have the structure of a finite cover; see ~\cite{Beals} for a general discussion), we can assume without loss of generality that $\widehat{Y}.f \in L^2(C)$. Applying integration by parts to the inner integral with
\begin{align*}
    u &= f\circ \Phi^Y_s, \\
    dv &= \overline{(g \circ \Phi^{\wW}_t \circ \Phi^{\wY}_s)} \thinspace ds,\\
    du &= (\widehat{Y}.f) \circ \Phi^{\wY}_s \thinspace ds,\\
    v &= \int_{0}^s \overline{(g \circ \Phi^{\wW}_t \circ \Phi^{\wY}_\sigma)} \thinspace d\sigma,
\end{align*}
we get
\begin{align}
    \left\langle f, g \circ \Phi^{\wW}_t \right\rangle_{L^2(C)} =  \ &\delta^{-1} \left\langle f\circ \Phi^{\wY}_s,\int_{0}^\delta (g \circ \Phi^{\wW}_t \circ \Phi^{\wY}_\sigma) \thinspace d\sigma \right\rangle_{L^2(C)} \label{eq:corr}\\
    &-\delta^{-1} \int_0^\delta  \left\langle (\widehat{Y}.f) \circ \Phi^{\wY}_s, \int_{0}^s (g \circ \Phi^{\wW}_t \circ \Phi^{\wY}_\sigma) \thinspace d\sigma\right\rangle_{L^2(C)} \thinspace ds. \nonumber
\end{align}
Therefore, it is enough to prove that, for every $\delta > 0$,
\begin{equation}
\label{eq:conv_curves}
\lim_{t \to \pm \infty} \int_{0}^\delta (g \circ \Phi^{\wW}_t \circ \Phi^{\wY}_s) \thinspace ds = 0 \quad \text{in } L^2(C).
\end{equation}

Again, we can assume $f \in H^{1,1}_\omega(C)$. Then, by \eqref{eq:bracket}, for all $s,t \in \mathbb{R}$,
\begin{equation}
\label{eq:ODE}
\frac{d}{ds} (g \circ \Phi^{\wW}_t \circ \Phi^{\wY}_s) = \left(\left(\widehat{Y}+t\widehat{R}\right).g\right) \circ \Phi^{\wW}_t \circ \Phi^{\wY}_s.
\end{equation}
Indeed, 
\[
\frac{d}{dt}d\Phi^{\wW}_t \left(\widehat{Y}\right) = d\Phi^{\wW}_t\left(\left[\widehat{W},\widehat{Y}\right]\right) =  d\Phi^{\wW}_t\left(\widehat{R}\right) = \widehat{R},
\]
from where, after integration over $t$, we deduce
\[
d\Phi^{\wW}_t\left( \widehat{Y} \right) = \widehat{Y} + t \widehat{R}.
\]

For every $n \in \mathbb{N}$, if $g \in E_n$, then equation $\eqref{eq:ODE}$ has the form
\[
\frac{du}{ds} = a + 2\pi i n t u,
\]
where
\begin{align*}
    u &= g \circ \Phi_t^{\wW} \circ \Phi_s^{\wY}, \\
    a &= (\widehat{Y}.g) \circ\ \Phi_t^{\wW} \circ \Phi_s^{\wY}.
\end{align*}
The solution of this ordinary differential equation is given by
\[
u = e^{2 \pi i  n t s} \left( u(0) + \int_0^s e^{-2 \pi i n t \sigma} a(\sigma) \thinspace d \sigma \right).
\]
It follows that
\[
g \circ \Phi_t^{\wW} \circ \Phi_s^{\wY} = e^{2 \pi i  n t s} \left( g \circ \Phi_t^{\wW} + \int_0^s e^{-2 \pi i n t \sigma} \thinspace (\widehat{Y}.g) \circ\ \Phi_t^{\wW} \circ \Phi_\sigma^{\wY} \thinspace d \sigma \right).
\]
For $n \neq 0$, integrating $s$ between $0$ and $\delta$ and using integration by parts with
\begin{align*}
u &= \int_0^s e^{-2 \pi i n t \sigma} \thinspace(\widehat{Y}.g) \circ \Phi_t^{\wW} \circ \Phi_\sigma^Y \thinspace d \sigma, \\
dv &= e^{2 \pi i nt s},\\
du &= e^{- 2 \pi i n t s}  (\widehat{Y}.g) \circ\ \Phi_t^{\wW} \circ \Phi_s^{\wY},\\
v &= \frac{1}{2 \pi i n t} e^{2 \pi i n t s},
\end{align*}
we deduce that, 
\begin{align}
    \int_0^\delta g \circ \Phi_t^{\wW} \circ \Phi_s^{\wY} \thinspace ds = &\frac{1}{2 \pi i n t} (g \circ \Phi_t^{\wW})(e^{2 \pi i n t \delta}  
 - 1) \label{eq:int_id} \\
 &+ \frac{1}{2 \pi i n t} \left( \int_0^\delta e^{2 \pi i n t (\delta - \sigma)} \thinspace \thinspace(\widehat{Y}.g) \circ \Phi_t^{\wW} \circ \Phi_\sigma^{\wY} \thinspace d \sigma  \right.  \nonumber\\
 & \hspace{3cm}- \left.\int_0^\delta (\widehat{Y}.g) \circ\ \Phi_t^{\wW} \circ \Phi_s^{\wY} \thinspace ds \right). \nonumber
 \end{align}
This integral converges to $0$ as $t \to \pm \infty$, proving the desired decay of correlations.

The stated bound on correlations for $\alpha=1$ follows from equations \eqref{eq:corr} and \eqref{eq:int_id}.  For $\alpha >1$ one can proceed by induction
since, by the same equations, one can 
express the desired correlation functions
in terms of integrals with respect to $s$ over bounded intervals of linear combination of  expressions of the form 
\[
\frac{1}{2 \pi i n t} \left\langle (\widehat{Y}.f) \circ \Phi_s^Y
, (\widehat{Y}.g) \circ \Phi^{\wW}_t \right\rangle_{L^2(C)} \,.
\]
Thus, an induction argument proves the dersired polynomial decay of
correlations for all powers $\alpha \in \N$ under the hypothesis
that $\smash{\widehat{Y}^\alpha.f}, \thinspace\smash{\widehat{Y}^\alpha.g} \in L^2(C)$.
\end{proof}

\subsection{Applications.}\label{sec:applications} Let $(M,\omega)$ be a translation surface and $C \to M$ be a Heisenberg circle bundle. Recall that, given a vector field on $C$ of the form
    \[
    \widehat{W} := a \widehat{X} + b \widehat{Y} + c \widehat{R}, \quad \text{with }a,b,c \in \mathbb{R},
    \]
    we denote by
    \[
    \phi^W := \{\phi^W_t \colon M \to M \}_{t \in \mathbb{R}}
    \]
    the flow induced by the vector field $W = aX + bY$ on $M$.

\begin{corollary}
\label{cor:ethry}
Let $(M,\omega)$ be a translation surface, $\eta:= \frac{i}{2}(\omega \wedge \overline{\omega})$ be its corresponding area form, $C \to M$ be a Heisenberg circle bundle, and $\mu$ be the measure on $C$ as defined above. Consider a vector field on $C$ of the form
    \[
    \widehat{W} := a \widehat{X} + b \widehat{Y} + c \widehat{R}, \quad \text{with } a,b,c \in \mathbb{R},
    \]
and let $W = a\wX + b\wY$ be the projected vector field on $(M,\omega)$. If the flow $\phi^W$ on $M$ is any of the following,
\begin{itemize}
    \item minimal,
    \item ergodic with respect to $\eta$,
    \item uniquely ergodic,
    \item weakly mixing with respect to $\eta$,
\end{itemize}
then the flow $\Phi^{\wW}$ on $C$ is, respectively,
\begin{itemize}
    \item minimal,
    \item ergodic with respect to $\mu$,
    \item uniquely ergodic,
    \item weakly mixing with respect to $\mu$.
\end{itemize}
\end{corollary}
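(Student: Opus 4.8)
The plan is to treat the four properties separately. The three measure-theoretic ones will follow from Theorem~\ref{thm:rel_mix_new} (relative mixing) together with the $\widehat R$-Fourier decomposition $L^2(C)=\bigoplus_{n\in\mathbb Z}E_n$, which by Proposition~\ref{prop:bracket} (namely $[\widehat W,\widehat R]=0$) is $\Phi^{\widehat W}$-invariant; here I use that $E_0=L^2(M)$ with $\Phi^{\widehat W}$ acting on $E_0$ as the Koopman operator of $\phi^W$, and that $E_n\subseteq L^2_*(C)$ for $n\neq 0$. In each case the hypothesis on $\phi^W$ forces $(a,b)\neq(0,0)$, so Theorem~\ref{thm:rel_mix_new} is applicable. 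Minimality, by contrast, I would obtain by a soft topological argument using the structure of $U(1)$-extensions and the non-vanishing of the Euler class of $C$.

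\emph{Ergodicity and weak mixing.} If $\phi^W$ is $\eta$-ergodic and $f\in L^2(C)$ is $\Phi^{\widehat W}$-invariant, then so is each $\pi_n(f)$. For $n\neq 0$, $\pi_n(f)\in L^2_*(C)$ and $\|\pi_n(f)\|^2=\langle\pi_n(f),\pi_n(f)\circ\Phi^{\widehat W}_t\rangle\to 0$ by relative mixing, so $\pi_n(f)=0$; for $n=0$, $\pi_0(f)\in L^2(M)$ is $\phi^W$-invariant, hence constant. Thus $f$ is constant and $\Phi^{\widehat W}$ is $\mu$-ergodic. If moreover $\phi^W$ is $\eta$-weakly mixing then it is ergodic, so $\Phi^{\widehat W}$ is already $\mu$-ergodic, and it suffices to rule out non-constant $L^2$ eigenfunctions. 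Given $f\neq 0$ with $f\circ\Phi^{\widehat W}_t=e^{2\pi i\lambda t}f$, each $\pi_n(f)$ satisfies the same relation, so for $n\neq 0$ one has $|\langle\pi_n(f),\pi_n(f)\circ\Phi^{\widehat W}_t\rangle|=\|\pi_n(f)\|^2$ for all $t$, forcing $\pi_n(f)=0$ by relative mixing; hence $f=\pi_0(f)\in L^2(M)$ is an eigenfunction of $\phi^W$, which by weak mixing of the base is constant with $\lambda=0$. So $\Phi^{\widehat W}$ is weakly mixing.

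\emph{Minimality.} Suppose $\phi^W$ is minimal but $\Phi^{\widehat W}$ has a proper minimal set $Y\subsetneq C$. Then $\pi(Y)$ is closed and $\phi^W$-invariant, hence $\pi(Y)=M$. Since $\Phi^{\widehat R}$ commutes with $\Phi^{\widehat W}$, each $\Phi^{\widehat R}_\rho(Y)$ is again minimal, so two such translates are equal or disjoint; letting $H\subseteq U(1)$ be the closed stabilizer of $Y$, the set $\bigcup_\rho\Phi^{\widehat R}_\rho(Y)$ is closed, $U(1)$-invariant and surjects onto $M$, hence meets each fiber in a non-empty $U(1)$-invariant subset and therefore equals $C$; thus $C=\bigsqcup_{\rho\in U(1)/H}\Phi^{\widehat R}_\rho(Y)$. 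If $H=U(1)$ then $Y=C$, a contradiction; otherwise $H$ is finite, of some order $k$, and intersecting this decomposition with a single fiber (and using the $H$-invariance of $Y\cap C_m$) forces $Y\cap C_m$ to be exactly one $H$-coset, i.e. $k$ equally spaced points, which by closedness of $Y$ depend continuously on $m$. Hence $Y$ descends to a continuous section of the circle bundle $C/H$, whose Euler class is $k$ times that of $C$; but the Euler class of $C$ is $c_1(L)=(2\pi\hbar)^{-1}[\eta]$, and $\int_M c_1(L)=(2\pi\hbar)^{-1}\mathrm{Area}(M,\omega)>0$, so $C/H$ is non-trivial and admits no continuous section — a contradiction. (Equivalently, such a section yields a non-zero continuous $\Phi^{\widehat W}$-invariant element of $E_k\subseteq L^2_*(C)$, directly contradicting relative mixing.) Therefore $\Phi^{\widehat W}$ is minimal.

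\emph{Unique ergodicity, and the main difficulty.} If $\phi^W$ is uniquely ergodic then it is ergodic, so $\Phi^{\widehat W}$ is $\mu$-ergodic by the ergodicity step; since $\Phi^{\widehat W}$ is a $U(1)$-extension of a uniquely ergodic flow and is ergodic with respect to $\mu=\eta\times(\text{Lebesgue on fibers})$, Furstenberg's criterion for unique ergodicity of compact group extensions~\cite{Fu61} gives that $\Phi^{\widehat W}$ is uniquely ergodic. I expect minimality to be the step requiring the most care: relative mixing is measure-theoretic, and $\mu$ does not detect a proper minimal set (which is $\mu$-null), so one is forced into the topological structure theory of $U(1)$-extensions over a minimal base and must invoke the non-vanishing of the Euler class of $C$ — the one point beyond relative mixing where the curvature hypothesis of Definition~\ref{def:hcb} is essential. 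The remaining three cases are essentially immediate from Theorem~\ref{thm:rel_mix_new}, together with Furstenberg's argument in the uniquely ergodic case.
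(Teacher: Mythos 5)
Your proposal is correct, and the measure-theoretic parts (ergodicity, weak mixing, unique ergodicity) follow essentially the paper's route: relative mixing (Theorem~\ref{thm:rel_mix_new}) kills everything in $\bigoplus_{n\neq 0}E_n$, the $E_0$-component is handled by the hypothesis on the base, and unique ergodicity is Furstenberg's criterion applied to the commuting $U(1)$-action. Your presentation of weak mixing via the absence of non-constant eigenfunctions, rather than the paper's direct estimate of the Ces\`aro averages of $|\langle f\circ\Phi^{\wW}_t,g\rangle|$, is an equivalent and slightly cleaner packaging. Where you genuinely diverge is minimality. The paper stays measure-theoretic: it shows each ergodic component $\mu'$ of $\mu$ (lying over an ergodic component $\eta'$ of $\eta$, which has full support by minimality of the base) is ergodic with full support, extracts a generic point with dense $\Phi^{\wW}$-orbit, propagates density to the whole fiber through that point using the commuting $\Phi^{\wR}$-action, and then reaches an arbitrary point by a compactness/triangle-inequality argument. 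You instead run the topological structure theory of minimal sets in a $U(1)$-extension over a minimal base: a proper minimal set would have finite stabilizer $H$ of order $k$, meet each fiber in a single $H$-coset, and hence define a continuous section of $C/H$, which is obstructed by the non-vanishing Euler class $k\cdot(2\pi\hbar)^{-1}[\eta]$ (or, as in your parenthetical, by producing a continuous $\Phi^{\wW}$-invariant unimodular element of $E_k$, contradicting relative mixing). Both arguments are sound; yours isolates the topological obstruction explicitly and works directly with minimal sets, while the paper's buys minimality as a byproduct of the same ergodic-decomposition machinery it has already set up, at the cost of a less transparent role for the curvature. One small caveat on your version: you should note at the outset that a proper closed invariant set contains a minimal set (Zorn), so that the dichotomy you run actually applies to an arbitrary failure of minimality; with that said, the argument is complete.
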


\begin{proof}  
Let us assume $\phi^W$ is minimal. By Theorem \ref{thm:rel_mix_new}, there is a one-to-one correspondence between the ergodic decomposition of $\eta$ with respect to $\phi^W$ and the ergodic decomposition of $\mu$ with respect to $\smash{\Phi^{\wW}}$ obtained by considering Lebesgue as the conditional measure along the fibers. Indeed, denote by $\eta'$ any ergodic component of $\eta$ with respect to $\phi^W$. Since $\phi^W$ is a  minimal translation flow, the ergodic decomposition of $\eta$ is finite. In particular,
there exists a function $\chi'$ on $M$, the characteristic function of a $\phi^W$-invariant set, such that $\eta' = \chi' \eta$.
Let $\mu':= \chi' \mu$ denote the measure on $C$ which projects to $\eta'$ on $M$ and has the normalized Lebesgue measure as the conditional measure along fibers. We claim that $\mu'$ is an invariant and ergodic measure for $\smash{\Phi^{\wW}}$.

Given a function $f \in L^2(C)$ we consider the splitting
\begin{equation*}
f = f_0 + f^\perp_0 \quad  \text{ with } \quad f_0 \in E_0 \quad \text{ and } \quad 
f^\perp_0 \in E_0^\perp= \bigoplus_{n\in  \Z\setminus\{0\}} E_n\,.
\end{equation*}
Since $f_0$ is constant along fibers, we can consider it as a function on $M$, and, using the ergodicity of $\phi^W$ with respect to $\eta'$, deduce that, in $L^2(C)$,
\begin{align*}
\lim_{T \to \infty} \frac{1}{T} \int_0^T \chi' f_0 \circ \Phi^{\wW}_t \thinspace dt &=
\lim_{T \to \infty} \frac{1}{T}\int_0^T \chi'f_0 \circ \phi^W_t \thinspace dt 
= \int_M \chi' f_0 \thinspace d\eta
= \int_C \chi' f \thinspace  d\mu'.
\end{align*}
Next, by Theorem \ref{thm:rel_mix_new}, for any function $g\in L^2(C)$,
$$
\lim_{T \to \infty} \left\langle \frac{1}{T} \int_0^T \chi' f^\perp_0 \circ \Phi^{\wW}_t \thinspace dt, g \right\rangle_{L^2(C)} = \lim_{T \to \infty}\frac{1}{T} \int_0^T \left\langle \chi' f^\perp_0 \circ \Phi^{\wW}_t, g \right\rangle_{L^2(C)} \thinspace  dt = 0.
$$
In other words, the ergodic averages with respect to $\Phi^{\wW}$ of the function $\chi' f^\perp_0$
converge weakly to zero. Since by the mean ergodic theorem the corresponding
limit exists in $L^2(C)$, such a limit must be equal to 
zero. We have thus proved that, in $L^2(C)$,
$$
\lim_{T \to \infty} \frac{1}{T} \int_0^T \chi' f \circ \Phi^{\wW}_t \thinspace dt = \int_C \chi' f \thinspace  d\mu',
$$
i.e., the flow $\Phi^{\wW}$ is ergodic with respect to $\mu'$.

Let us now show that $\smash{\Phi^{\wW}}$ is transitive. Denote by $\eta'$ any ergodic component with respect to $\phi^W$ of $\eta$ and by $\mu'$ the corresponding ergodic component of $\mu$ with respect to $\smash{\Phi^{\wW}}$. Because $\phi^W$ is minimal, $\eta'$ has full support. Thus, $\mu'$ also has full support. Ergodicity of $\mu'$ then guarantees the existence of a generic point $(m_0, \rho_0) \in C$ for $\mu'$ whose $\smash{\Phi^{\wW}}$-orbit must be dense in $C$. Now consider the fiber $C_{m_0}$ of $C$ containing $(m_0, \rho_0)$. As rotation along fibers commutes with $\smash{\Phi^{\wW}}$, every point of the fiber has dense $\smash{\Phi^{\wW}}$-orbit. To deduce that any point $(m,\rho) \in C$ has dense $\smash{\Phi^{\wW}}$-orbit, and hence $\smash{\Phi^{\wW}}$ would be minimal as desired, we use the minimality of $\phi^W$ to find points along the $\smash{\Phi^{\wW}}$-orbit of $(m,\rho)$ that get arbitrarily close to the fiber $C_{m_0}$. A compactness argument then allows us to find a subsequence converging to a point $(m_0,\rho_1) \in C_{m_0}$. As explained above, such point has dense $\smash{\Phi^{\wW}}$-orbit. The desired conclusion then follows by the triangle inequality. 

Next, assume $\Phi^W$ is ergodic with respect to $\eta$. By the correspondence established above between ergodic components of 
$\eta$ with respect to $\Phi^W$ and the ergodic components of $\mu$ with respect to $\smash{\Phi^{\wW}}$, it follows that
$\smash{\Phi^{\wW}}$ is ergodic with respect to $\mu$.

Let us now assume that $\phi^W$ is uniquely ergodic. Notice that Proposition \ref{prop:bracket} guarantees the flows $\smash{\Phi^{\wW}}$ and $\smash{\Phi^{\wR}}$ commute. Now, by the above argument, the flow $\smash{\Phi^{\wW}}$ is ergodic with respect to $\mu$. Thus, by Furstenberg's criterion on unique ergodicity of skew products \cite[Theorem 4.1]{Fu61}, we conclude $\smash{\Phi^{\wW}}$ is uniquely ergodic.


Next, let us assume that $\phi^W$ is weakly mixing with respect to $\eta$. Consider arbitrary functions $f,g \in L^2(C)$ of zero average. For every $T > 0$ we write
\begin{gather*}
\frac{1}{T} \int_0^T \bigg\vert \left\langle f\circ \Phi^{\wW}_t, g \right\rangle_{L^2(C)} \bigg\vert \thinspace dt \\ 
\leq \frac{1}{T} \int_0^T \bigg\vert \left\langle f_0\circ \Phi^{\wW}_t, g \right\rangle_{L^2(C)} \bigg\vert \thinspace dt + \frac{1}{T} \int_0^T \bigg\vert \left\langle f_0^\perp\circ \Phi^{\wW}_t, g \right\rangle_{L^2(C)} \bigg\vert \thinspace dt.
\end{gather*}
Since, by assumption, the flow $\phi^W$ is weakly mixing with respect to $\eta$,
considering $f_0$ and $g_0$ as zero average functions on $L^2(M)$, we deduce
$$
\lim_{T \to \infty} \frac{1}{T}\int_0^T \bigg\vert \left\langle f_0\circ \Phi^{\wW}_t, g \right\rangle_{L^2(C)} \bigg\vert \thinspace dt= \lim_{T \to \infty} \frac{1}{T}\int_0^T \big\vert \left\langle f_0\circ \phi^W_t, g_0 \right\rangle_{L^2(M)} \big\vert \thinspace dt = 0.
$$
Theorem \ref{thm:rel_mix_new} guarantees
$$
 \lim_{T \to \infty} \frac{1}{T} \int_0^T \bigg\vert \left\langle f_0^\perp\circ \Phi^{\wW}_t, g \right\rangle_{L^2(C)} \bigg\vert \thinspace dt = 0 .
$$
We conclude that 
$$
\lim_{T \to \infty} \frac{1}{T} \int_0^T \bigg\vert \left\langle f\circ \Phi^{\wW}_t, g \right\rangle_{L^2(C)} \bigg\vert \thinspace dt =0,
$$
i.e., the flow $\Phi^{\wW}$ is weakly mixing with respect to $\mu$.
\end{proof}

\begin{question} Is the speed of converge in the ergodic theorem
for the flow $\Phi^{\widehat W}$ polynomial, i.e., a power law, for
Lebesgue generic parameters $(a,b,c) \in \smash{\mathbb{P}^1(\R^3)}$ and for sufficiently smooth functions, as is the case for translation flows on higher genus surfaces?

\end{question}

\section{Spectral theory}\label{sec:spectral}

We now turn to study the spectra of the Koopman groups of Heisenberg translation flows. We prove that if the linear flow on the base translation surface is aperiodic, then the maximal spectral type of these groups is positive on all open sets. 

\subsection{Spectra of Koopman groups.} Let us recall some terminology about general Koopman groups and their spectral theory. We will consider probability measure-preserving flows $(X,\mu,\phi)$, where $X$ is a measurable space, $\mu$ is a probability measure, and $\phi$ is a measurable flow on $X$ preserving $\mu$; we write
\[
\phi := \{\phi_t \colon X \to X\}_{t \in \mathbb{R}}.
\]

\begin{definition}
    Let $(X,\mu,\phi)$ be a probability measure-preserving flow. Then, the \textit{Koopman group} of $\phi$ is the one-parameter strongly continuous group 
    \[
    U := \{U_t \colon \ L^2(X,\mu) \to L^2(X,\mu)\}_{t \in \mathbb{R}}
    \]
    of unitary operators  given for every $t \in \mathbb{R}$ and every $f \in L^2(X,\mu)$ by
    \[
    U_t(f) := f \circ \phi_t.
    \]
    By Stone's theorem, there exists a unique unbounded self adjoint operator 
    \[
    H \colon L^2(X,\mu) \to L^2(X,\mu)
    \]
    that generates the Koopman group, i.e., such that for every $t \in \mathbb{R}$,
    \[
    U_t = e^{-itH}.
    \]
    We refer to $H$ as the \textit{Koopman Hamiltonian} of $\phi$ and to its spectrum as the \textit{spectrum} of $\phi$. In the same way, spectral properties of $H$ will be referred to as spectral properties of $\phi$.
\end{definition}

\subsection{Rokhlin towers.} To prove the main result of this section we will use Rokhlin's lemma for aperiodic probability measure-preserving flows, which we now recall:

\begin{definition}
    A measure preserving flow $(X,\mu,\phi)$ is said to be \textit{aperiodic} if the set of its periodic points has measure zero.
\end{definition}

\begin{definition}
    Let $(X,\mu,\phi)$ be a measure preserving flow. A \textit{Rokhlin tower} is a family of disjoint subsets of $X$ of the form
    \[
    \{\phi_t S \colon \ t \in [0,T)\},
    \]
    where $S \subseteq X$ and $T > 0$.
    The set $S$ is called the \textit{base} of the tower, $T$ is called the \textit{height} of the tower, the set 
    \[
    R := \bigcup_{t \in [0,T)} \phi_t S   
    \]
    is called the \textit{tower}, and $X \setminus R$ is called the \textit{error set}.
\end{definition}

\begin{lemma}
    \label{lem:rokhlin}
    Suppose $(X,\mu,\phi)$ is a probability measure-preserving flow on a Lebesgue probability space. Then, $\phi$ admits  Rokhlin towers of arbitrarily large heights with error sets of arbitrarily small measure.
\end{lemma}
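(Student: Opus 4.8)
The statement is the classical Rokhlin lemma for aperiodic measure-preserving flows, so the plan is to reduce to the well-known discrete-time Rokhlin lemma via a cross-section / time-$1$ map argument, or alternatively to invoke a standard reference. I would proceed as follows.

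First, I would reduce to the case of a flow built as a suspension. By Ambrose--Kakutani's representation theorem, any aperiodic measure-preserving flow on a Lebesgue probability space is measurably isomorphic to a flow built under a function: there is a base automorphism $(Y,\nu,S)$ and a positive roof function $r \colon Y \to \R_{>0}$ with $\int_Y r \, d\nu < \infty$ such that $(X,\mu,\phi)$ is isomorphic to the special flow $(Y^r, \mu^r, \phi^r)$, where $Y^r = \{(y,u) : 0 \le u < r(y)\}$, the measure $\mu^r$ is (normalized) $\nu \times \mathrm{Leb}$, and $\phi^r_t$ moves the second coordinate forward, applying $S$ and resetting when the roof is reached. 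Everything that follows can be carried out in this model and transported back.

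Next, fix the desired height $T > 0$ and desired error bound $\varepsilon > 0$. The idea is to find a base set $B \subseteq Y$, of small measure, such that the orbits of points of $B$ under $\phi^r$ stay ``organized'' long enough. Concretely: choose $N \in \N$ large (to be determined), and using the discrete Rokhlin lemma for the aperiodic automorphism $S$ applied to a suitable induced/refined system, produce a set $B_0 \subseteq Y$ such that $B_0, S B_0, \dots, S^{N-1} B_0$ are disjoint and $\nu\big(Y \setminus \bigcup_{j=0}^{N-1} S^j B_0\big) < \varepsilon'$. One then wants the flow to spend at least time $T$ traversing $N$ consecutive iterates; this is where one has to be slightly careful, since $r$ may be very small on a bad set. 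The fix is to first discard from $Y$ the set where $r$ is too small --- more precisely, replace $r$ by a bounded-below roof on a large-measure invariant-under-return subset, or simply choose $B_0$ inside $\{r \ge c\}$ for a small $c$ with $\nu(\{r < c\})$ small (possible since $r > 0$ $\nu$-a.e.), and take $N \ge T/c$. Then set $S = B_0 \times [0, \epsilon_0)$ for small $\epsilon_0$, or more cleanly, let the tower base be $B_0 \times \{0\}$ thought of as a subset of $Y^r$ and flow it for time $T$: the sets $\{\phi^r_t (B_0 \times [0,\epsilon_0)) : t \in [0,T)\}$ are pairwise disjoint because during time $[0,T)$ one only sees the columns over $B_0, SB_0, \dots, S^{k} B_0$ for $k < N$, which are disjoint, and the vertical coordinate keeps these from overlapping. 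A direct computation of $\mu^r$-measure of the complement in terms of $\nu(Y \setminus \bigcup S^j B_0)$, $\nu(\{r < c\})$, and $\epsilon_0/T$ then gives error $< \varepsilon$ for appropriate parameter choices.

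The main obstacle, and the only genuinely delicate point, is controlling the roof function $r$ from below so that $N$ iterates of $S$ correspond to flow time at least $T$; if one does not excise $\{r < c\}$ first, the tower one builds could have height far less than $T$ no matter how large $N$ is. Everything else --- the disjointness bookkeeping and the measure estimate for the error set --- is routine once the Ambrose--Kakutani model and the discrete Rokhlin lemma are in hand. (Alternatively, one may simply cite a standard reference for the Rokhlin lemma for flows, e.g. the treatment via Ambrose--Kakutani, and omit the details; but the argument above is self-contained modulo those two classical results.)
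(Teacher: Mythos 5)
The paper offers no proof of Lemma~\ref{lem:rokhlin}; it is invoked as the classical Rokhlin lemma for flows (note that aperiodicity, which your argument correctly relies on via Ambrose--Kakutani and which the subsequent Lemma~\ref{lemma:no_gaps} assumes, is needed here too but is missing from the statement as printed). Your strategy --- represent the flow as a special flow over an aperiodic automorphism $(Y,\nu,S)$ with roof $r$, apply the discrete Rokhlin lemma to $S$, and assemble the flow tower from the columns --- is the standard route. But the final step as you set it up fails: what you build is a legitimate height-$T$ Rokhlin tower whose \emph{error set is large}, not small. With tower base $B_0\times\{0\}$, the tower $R=\bigcup_{t\in[0,T)}\phi^r_t(B_0\times\{0\})$ has normalized measure exactly $T\nu(B_0)\big/\int_Y r\,d\nu$, while disjointness of $B_0,\dots,S^{N-1}B_0$ forces $\nu(B_0)\le 1/N\le c/T$; hence $\mu^r(R)\le c\big/\int_Y r\,d\nu$, which is bounded away from $1$ unless $r$ is essentially constant, and enlarging $N$ (which you must do to secure disjointness) only shrinks the coverage further. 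So the claimed ``direct computation of the complement in terms of $\nu(Y\setminus\bigcup S^jB_0)$, $\nu(\{r<c\})$, and $\epsilon_0/T$'' cannot succeed: the bulk of the uncovered set is the portion of each discrete column at flow-height $\ge T$ above the base, which none of those quantities sees.

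The missing idea is to enlarge the tower base by slicing the columns. For each $y\in B_0$, the flow segment from $(y,0)$ to $(S^Ny,0)$ has length $r_N(y)=\sum_{j<N}r(S^jy)$; cut it into $\lfloor r_N(y)/T\rfloor$ consecutive pieces of length exactly $T$ plus a remainder of length $<T$, and take as tower base the set of starting points $\{\phi^r_{kT}(y,0)\,:\,y\in B_0,\ 0\le k<\lfloor r_N(y)/T\rfloor\}$. Disjointness of the resulting family $\{\phi^r_tB\}_{t\in[0,T)}$ follows because distinct columns of the discrete tower are disjoint and the pieces within one orbit segment are consecutive; the error set is now contained in the complement of the discrete tower together with the top remainders, of total measure at most $o_{\varepsilon'}(1)+T\nu(B_0)\le o_{\varepsilon'}(1)+T/N$, which is small for $N$ large. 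Two further repairs are needed: (i) putting $B_0\subseteq\{r\ge c\}$ does not bound $r$ from below on $SB_0,\dots,S^{N-1}B_0$, so $r_N\ge Nc$ is not justified as written; you genuinely need the representation with roof bounded below (Rudolph/Krengel), which you mention only in passing. (ii) The thickened base $B_0\times[0,\epsilon_0)$ does not yield disjoint sets, since $\phi^r_t(B_0\times[0,\epsilon_0))$ and $\phi^r_{t'}(B_0\times[0,\epsilon_0))$ overlap whenever $|t-t'|<\epsilon_0$; one must use the measure-zero slice $B_0\times\{0\}$, which the paper's definition of a Rokhlin tower permits and which is what the application in Lemma~\ref{lemma:no_gaps} actually uses.
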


In this context, the following general result holds:

\begin{lemma} 
\label{lemma:no_gaps}
Suppose $(X,\mu,\phi)$ is a probability measure-preserving flow on a Lebesgue probability space. If $\phi$ is aperiodic, then its spectrum is the entire real axis, and, in particular, its maximal spectral type has full support.
\end{lemma}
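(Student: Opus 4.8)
The plan is to build, for each frequency $\lambda\in\R$, an approximate eigenfunction of the Koopman Hamiltonian $H$ with eigenvalue $\lambda$, thereby showing that $\lambda$ lies in the spectrum of $H$; since $\lambda$ was arbitrary, the spectrum is all of $\R$. The construction uses Rokhlin towers: by Lemma~\ref{lem:rokhlin}, for every $\epsilon>0$ and every $T>0$ we may choose a Rokhlin tower with base $S$, height $T$, tower $R=\bigcup_{t\in[0,T)}\phi_tS$, and error set of measure less than $\epsilon$. On the tower $R$ we have coordinates $(x,t)$ with $x\in S$, $t\in[0,T)$, in which $\phi_r$ acts by $t\mapsto t+r$ up to the moment the orbit leaves the tower; the restriction of $\mu$ to $R$ disintegrates as $d\nu(x)\,dt$ for a measure $\nu$ on $S$ with $\nu(S)\cdot T=\mu(R)$.

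First I would define the candidate test function $f_\lambda$ to be supported on a slightly shrunk sub-tower, say $R'=\bigcup_{t\in[\delta,T-\delta)}\phi_tS$, by setting $f_\lambda(\phi_tx)=w(t)\,e^{i\lambda t}$ for $x\in S$ and $t\in[0,T)$, where $w$ is a smooth bump function on $[0,T]$ equal to $1$ on $[\delta,T-\delta]$, vanishing near $0$ and $T$, normalized so that $\|f_\lambda\|_{L^2}=1$. Second I would compute $(H+\lambda)f_\lambda$, equivalently estimate $\|U_rf_\lambda-e^{-i\lambda r}f_\lambda\|$ for small $r$: on the part of the tower where the $\phi$-flow stays inside $R$, $U_rf_\lambda(\phi_tx)=w(t+r)e^{i\lambda(t+r)}$, so $U_rf_\lambda-e^{i\lambda r}f_\lambda$ equals $e^{i\lambda(t+r)}(w(t+r)-w(t))$ there, whose $L^2$-norm is $O(|r|\cdot\|w'\|)$; the only other contribution comes from points of $R$ that exit the tower within time $r$, a set of measure $O(|r|\,\nu(S))=O(|r|\,\mu(R)/T)$, plus the error set. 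Third, choosing $T$ large (so the relative boundary contributions $\|w'\|_{L^2}\sim T^{-1/2}$ and $\mu(R)/T\to 0$) and $\epsilon$ small, I get $\|U_rf_\lambda-e^{i\lambda r}f_\lambda\|_{L^2}\le C(r)$ with $C(r)/|r|\to 0$ appropriately, which via Stone's theorem (differentiating at $r=0$, or directly bounding $\|(H+\lambda)f_\lambda\|$ using the smoothness of $w$ along the flow direction) shows $\mathrm{dist}(-\lambda,\mathrm{spec}(H))$ can be made arbitrarily small; hence $-\lambda\in\mathrm{spec}(H)$, and as $\lambda$ ranges over $\R$ so does $-\lambda$.

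Finally I would record that since $\mathrm{spec}(H)=\R$ and the spectrum equals the support of the maximal spectral type (for a self-adjoint operator with a cyclic-vector decomposition, the union of the supports of the spectral measures is the spectrum), the maximal spectral type has full support. The main obstacle is the careful bookkeeping of the set of points that leave the Rokhlin tower during a short time interval: one must arrange the tower height $T$ and the error parameter $\epsilon$ so that \emph{both} the $L^2$-mass that escapes \emph{and} the $L^2$-size of the derivative of the cutoff $w$ along the flow are negligible compared to $\|f_\lambda\|=1$; this forces a two-parameter limit ($T\to\infty$ and $\epsilon\to 0$) and an honest use of aperiodicity through Lemma~\ref{lem:rokhlin}. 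Everything else — the explicit form of $f_\lambda$, the estimate $w(t+r)-w(t)=O(|r|)$, and the passage from approximate eigenfunctions to membership in the spectrum — is routine.
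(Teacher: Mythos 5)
Your proposal is correct and follows essentially the same route as the paper: build approximate eigenfunctions supported on Rokhlin towers by putting the phase $e^{i\lambda t}$ along the flow direction and letting the tower height tend to infinity. The only difference is technical rather than conceptual — the paper uses a sharp cutoff (the indicator of the tower) and verifies the Weyl-type criterion $\|f_k\circ\phi_t - e^{-i\lambda t}f_k\|_{L^2}\to 0$ for each fixed $t$, obtaining the bound $2(t/T_k)^{1/2}$ with no smoothing needed, whereas you smooth the cutoff in order to bound $\|(H+\lambda)f_\lambda\|$ directly via Stone's theorem; both work, and in fact your smooth cutoff renders the escaping-mass bookkeeping you flag at the end unnecessary for $|r|<\delta$, since the function vanishes near the top and bottom of the tower.
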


\begin{proof} 
Let $H$ be the Koopman Hamiltonian of $\phi$. Fix $\lambda \in \mathbb{R}$. To prove that $\lambda$ belongs to the spectrum of $H$, it is enough to find a sequence $(f_k)_{k \in \mathbb{N}}$ of functions in $L^2(X,\mu)$ with $\|f_k\|_{L^2(X,\mu)} = 1$ and such that for every $t > 0$,
\begin{equation}
\label{eq:spec_lim}
\lim_{k \to \infty} \|f_k \circ \phi_t - e^{-i\lambda t} f_k\|_{L^2(X,\mu)} = 0.
\end{equation}

Let $k \in \mathbb{N}$. By Lemma \ref{lem:rokhlin}, $\phi$ admits a Rokhlin tower $R_k$ of height at least $k$ and with error set of measure at most $1/k$. Let $B_k$ be the base of such tower and $T_k \geq k$ be the height. For every $x \in B_k$ and every $t \in [0,T_k)$ set
\[
f_k(\phi_t(x)) := e^{-i \lambda t} \mu(R_k)^{-1/2}.
\]
On the other hand, if $x \in X \setminus R_k$ set
\[
f_k(x) := 0.
\]
Then, $\|f_k\|_{L^2(X,\mu)} = 1$. Furthermore, for all $t > 0$,
\[
\|f_k \circ \phi_t - e^{-i\lambda t} f_k\|_{L^2(X,\mu)} \leq  2(t/T_k)^{1/2}.
\]
Taking $k \to \infty$ we conclude that \eqref{eq:spec_lim} holds.
\end{proof}

\subsection{Spectra of translation flows} Now let $(M,\omega)$ be a translation surface. Recall that we denote by $X$ and $Y$ the vertical and horizontal vector fields induced by $\omega$ on $M$. For every $a,b \in \mathbb{R}$ consider the vector field
$W= W_{a,b} = a X+ bY$ on $M$. It is well known that for all but countably many $(a,b)\in P^1(\R)$, the linear flow $\phi^{W_{a,b}}$ on $M$ is aperiodic. It follows from Lemma \ref{lemma:no_gaps}
that the spectrum of such flows is all of $\mathbb{R}$. Furthermore, in \cite{AHCF24}, a complete characterization of the translation surfaces for which the spectrum
of $\phi^{W_{a,b}}$ is not continuous for Lebesgue almost every $(a,b)\in P^1(\R)$ is provided.

\subsection{Spectra of Heisenberg flows.} Let $(M,\omega)$ be a translation surface and $C \to M$ be a Heisenberg circle bundle. Recall that for every $n \in \mathbb{Z}$ we denote
    \[
    E_n := \{f \in L^2(C) \ | \ f \circ \Phi^{\wR}_t = e^{2\pi i n t} f\}
    \]
    and that we consider the Fourier orthogonal splitting
    \begin{equation*}
    L^2(C) = \bigoplus_{n \in \mathbb{Z}} E_n.
    \end{equation*}
    Recall also that we denote
    \[
    L^2_*(C) := E_0^\perp = \bigoplus_{n \in \mathbb{Z} \setminus \{0\}} E_n .
    \]

\begin{theorem} 
\label{prop:spectra}
Let $(M,\omega)$ be a translation surface and $C \to M$ be a Heisenberg circle bundle. Consider the  vector field on $C$ given by 
    \[
    \widehat{W} := a \widehat{X} + b \widehat{Y} + c \widehat{R}, \quad \text{with } a,b,c \in \mathbb{R},
    \]
and let $W = aX + bY$ be the projected vector field on $M$. Then, for every $(a,b,c)\in P^1(\R^2)$ such that $(a,b)\not=(0,0)$, the flow $\smash{\Phi^{\wW}}$ has absolutely continuous spectrum of countable multiplicity, and, if the projected translation flow $\phi^{W}$ is aperiodic, then the spectrum on the space $L^2_*(C)$ has no gaps, i.e., the maximal spectral type of $\smash{\Phi^{\wW}}$ on this space has full support. In fact, for every $n\in \Z\setminus 
\{0\}$, if $(a,b)\not=(0,0)$, then the spectrum of $\smash{\Phi^{\wW}}$ is absolutely continuous on $E_n$ and, if the projected translation flow $\phi^{W}$ is aperiodic, then such spectrum has no gaps.
\end{theorem}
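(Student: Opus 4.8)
The plan is to combine the quantitative decay of correlations from Theorem \ref{thm:rel_mix_new} with the general no-gaps result Lemma \ref{lemma:no_gaps}, working one Fourier mode $E_n$ at a time since the decomposition $L^2(C) = \bigoplus_n E_n$ is $\Phi^{\wW}$-invariant by the remark following the definition of the Fourier modes (which uses Proposition \ref{prop:bracket}). First I would establish absolute continuity of the spectrum on each $E_n$ with $n \neq 0$. By Theorem \ref{thm:rel_mix_new}, for $f, g$ in a dense subspace of $E_n$ (e.g.\ $H^{1,0}_\omega(C) \cap E_n$), the correlation function $t \mapsto \langle f, g \circ \Phi^{\wW}_t\rangle$ is bounded by $K_1(1+|t|)^{-1}\|f\|\|g\|$, hence lies in $L^2(\R)$. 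A standard argument — the spectral measure $\sigma_{f,g}$ has Fourier transform equal to the correlation function, so $\widehat{\sigma_{f,g}} \in L^2(\R)$ implies $\sigma_{f,g}$ is absolutely continuous with $L^2$ density — then shows the spectral measure of every vector in this dense subspace is absolutely continuous, and since absolute continuity of the maximal spectral type is a closed condition, $\Phi^{\wW}$ has purely absolutely continuous spectrum on $E_n$. Summing over $n \neq 0$ gives absolute continuity on $L^2_*(C)$; combined with the fact that the linear flow $\phi^W$ on $M$ (acting on $E_0 = L^2(M)$) may contribute its own spectrum, one gets absolutely continuous spectrum on $L^2_*(C)$ and the countable multiplicity is immediate since $L^2(C)$ is separable.

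Next, for the no-gaps statement I would invoke Lemma \ref{lemma:no_gaps}, but I need an \emph{aperiodic} flow to apply it, and the natural candidate is the restriction of $\Phi^{\wW}$ to a single fiber-character. The cleanest route: fix $n \in \Z\setminus\{0\}$ and observe that $E_n$ can be realized as the $L^2$-sections of the flat-twisted line bundle $L^{-n}$ (or more concretely, functions on $C$ transforming by $e^{2\pi i n t}$ under $\Phi^{\wR}$), and the action of $\Phi^{\wW}$ there is a unitary twist of the Koopman operator of $\phi^W$. To apply Lemma \ref{lemma:no_gaps} directly one wants a genuine probability measure-preserving aperiodic flow whose Koopman operator on $L^2$ restricts to (a piece unitarily equivalent to) $\Phi^{\wW}|_{E_n}$; the flow $\Phi^{\wW}$ itself on $(C,\mu)$ is probability measure-preserving, and when $\phi^W$ is aperiodic on $(M,\eta)$ then $\Phi^{\wW}$ is aperiodic on $(C,\mu)$ — a periodic orbit of $\Phi^{\wW}$ would project to a periodic orbit of $\phi^W$ (the fiber rotation picked up over a period is a fixed rotation, and the orbit closes only if that rotation is rational, but the projection is already periodic), so the periodic set of $\Phi^{\wW}$ lies over the periodic set of $\phi^W$, which has measure zero. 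Hence Lemma \ref{lemma:no_gaps} gives that the spectrum of $\Phi^{\wW}$ on \emph{all} of $L^2(C)$ is $\R$ with maximal spectral type of full support. To push this onto $L^2_*(C)$ and onto each $E_n$ separately, I would argue that the approximate-eigenfunctions $f_k$ built in the proof of Lemma \ref{lemma:no_gaps} from Rokhlin towers can be chosen in $E_n$: one builds the Rokhlin tower for $\phi^W$ on $M$, lifts the base to the circle bundle and defines $f_k$ on the lifted tower as $e^{-i\lambda t}$ times an appropriate unit-modulus fiber character $e^{2\pi i n \rho}$ normalization, so that $f_k \in E_n$, $\|f_k\|=1$, and the same estimate $\|f_k\circ\Phi^{\wW}_t - e^{-i\lambda t}f_k\| \le 2(t/T_k)^{1/2}$ holds; this exhibits every $\lambda \in \R$ in the spectrum of $\Phi^{\wW}|_{E_n}$.

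The main obstacle I anticipate is the second part — getting the no-gaps conclusion \emph{restricted to $E_n$} rather than on $L^2(C)$ as a whole. Lemma \ref{lemma:no_gaps} as stated only sees the full $L^2$; its proof constructs approximate eigenfunctions adapted to a Rokhlin tower but with no control over which Fourier mode they land in. So the real work is to redo that construction fiber-equivariantly: use the Rokhlin lemma for $\phi^W$ on the base, lift, and insert the character $e^{2\pi i n\rho}$ on the fiber, checking that parallel transport along the flow (which shears the fiber coordinate, by the mechanism of Proposition \ref{prop:bracket}) does not spoil the error estimate — it won't, because the shear is a deterministic translation of $\rho$ that is absorbed into the phase and does not change $|f_k|$ or the measure of the symmetric-difference set. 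A secondary, more routine point to be careful about is the standard equivalence ``$\widehat{\sigma_{f,g}} \in L^2 \Rightarrow \sigma$ absolutely continuous'' and the passage from a dense set of vectors with a.c.\ spectral measure to purely a.c.\ spectrum for the group; I would cite a standard reference (e.g.\ Cornfeld–Fomin–Sinai or Katznelson–Weiss) rather than reprove it.
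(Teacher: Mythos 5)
Your proposal is correct and follows essentially the same route as the paper: square-integrable correlations from Theorem \ref{thm:rel_mix_new} give absolutely continuous spectral measures on a dense subspace of each $E_n$, hence absolutely continuous spectrum there, and the no-gaps statement is obtained by building Rokhlin-tower approximate eigenfunctions for the aperiodic base flow $\phi^W$ and lifting them into $E_n$ via an equivariant trivialization of $C$ over the (simply connected) tower, twisted by the fiber character. The one technical point you flagged as the ``real work'' --- forcing the approximate eigenfunctions into a fixed Fourier mode --- is exactly how the paper handles it.
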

\begin{proof}
For every $n\in \Z$ the space $E_n$ is invariant under all flows
$\Phi^{\wW}$ and, by Theorem~\ref{thm:rel_mix_new}, for $n\not=0$, correlations of smooth functions in $E_n$ are square-integrable. It follows that $\smash{\Phi^{\wW}}$ has absolutely continuous spectrum on $E_n$ and, in particular, has absolutely continuous
spectrum of countable multiplicity on $L^2_*(C)$. Indeed, for every $f, g \in L^2(C)$, the correlation function 
\[
t \in \mathbb{R} \mapsto \langle f \circ \Phi^{\wW}_t, g\rangle_{L^2(C)}
\]
is the Fourier
transform of the spectral measure induced by $f$ and $g$, seen as a complex measure on the real line. Thus, if the correlation function
is square-integrable, then its Fourier transform is not only a measure but actually a square-integrable function. Therefore,
the corresponding spectral measure is absolutely continuous with  square-integrable density. Since for every $n \in \Z$
smooth functions in $E_n$ are dense in $E_n$ and the spectral components of the flow are
closed, it follows that the spectrum of $\smash{\Phi^{\wW}_\R}$ on $E_n$ for $n\not =0$ is absolutely continuous with respect to Lebesgue.

The proof that the spectrum on each $E_n$ has no gaps for $n\not =0$ is similar to the proof of Lemma \ref{lemma:no_gaps}.
Indeed, fix $\lambda \in \mathbb{R}$. Since by hypothesis the projected flow $\phi^W$ is aperiodic, there exists a sequence $(f_k)_{k\in \N}$ of functions in $L^2(M)$ with $\|f_k\|_{L^2(M)} =1$ and such that for every $s>0$ we have
\[
\lim_{k \to \infty} \|f_k \circ \phi_s^W - e^{-i\lambda s} f_k\|_{L^2(X,\mu)} = 0.
\]
In this setting we can further assume that the support $R_k$ of the functions $f_k$ is simply connected. It follows that we can trivialize the circle bundle $C$ over $R_k$ and consider a section $\theta_k \colon R_k \to C$ equivariant with respect to $\phi^W$ and $\smash{\Phi^{\wW}_\R}$.
Consider then the sequence of functions $\smash{(\widehat f_k)}_{k \in \mathbb{N}}$ defined as
\[
\widehat f_k(\Phi^{\widehat{R}}_t( \theta_k(x))) := e^{int}f_k(x) \quad \text{if } x \in R_k \text{ and }t \in \mathbb{R},
\]
and zero otherwise. Then, $f_k \in E_n$, $\|f_k \|_{L^2(C)} =1$, and for every $s \in \mathbb{R}$,
\[
\lim_{k \to \infty} \|\widehat{f}_k \circ \phi_s^{\widehat{W}} - e^{-i\lambda s} \widehat{f}_k\|_{L^2(C)} = 0. \qedhere
\]
\end{proof}

\begin{question} Is the spectrum of $\Phi^{\widehat W}$ on $L^2_*(C)$ Lebesgue
with infinite (countable) multiplicity ? In other terms, is the spectrum of $\Phi^{\widehat W}$ on $E_n$ Lebesgue for all $n \in \Z\setminus\{0\}$~? We remark that the criterion for countable Lebesgue spectrum of \cite{FFK21} does not immediately apply to our case.
    
\end{question}

\section{Prequantization}\label{sec:prequant}

We now turn to the question of how Heisenberg translation flows can be interpreted as prequantizations of translation flows on surfaces. In particular, the corresponding covariant derivative operator can be interpreted as the prequantized version of the momentum in the direction of the translation flow.

\subsection{Dirac's quantum conditions.}\label{sec:dirac} Let $(M,\omega)$ be a translation surface, $\eta:= \frac{i}{2}(\omega \wedge \overline{\omega})$ be its corresponding area form. Even though $(M,\omega)$ is singular as a symplectic manifold, one can still make sense of the Poisson bracket on it in the following way:

\begin{definition}
    \label{def:poisson}
    Let $(M,\omega)$ be a translation surface with singularities $\Sigma \subseteq M$ and $\eta:= \frac{i}{2}(\omega \wedge \overline{\omega})$ be its corresponding area form. Given a smooth compactly supported function $f \in C_0^\infty(M \setminus \Sigma)$, its \textit{Hamiltonian vector field} is the vector field $X_f$ defined on $M \setminus \Sigma$ by
    \[
    \eta(\cdot,X_f) = df.
    \]
    Given a pair of smooth compactly supported functions $f,g \in C_0^\infty(M \setminus \Sigma)$, their \textit{Poisson bracket} is the smooth function $\{f,g\} \in C_0^\infty(M \setminus \Sigma)$ defined by the relation
    \[
    \{f,g\} := \eta(X_f,X_g).
    \]
\end{definition}

\paragraph*{\bf Square-integrable sections} Let $(M,\omega)$ be a translation surface, $\eta:= \frac{i}{2}(\omega \wedge \overline{\omega})$ be its corresponding area form, and $L \to M$ be a Heisenberg line bundle. Denote by $\Gamma^2(L)$ the space of square-integrable sections of $L$, i.e., sections $s \colon M \to L$ such that
\[
\|s\|_{\Gamma^2(L)} :=\int_M \|s(m)\|^2 \thinspace d\eta(m) < +\infty.
\]
Such sections can be interpreted as prequantum states. More precisely, every such section gives rise to a probability distribution on classical states, i.e., on M, given by
\[
\frac{\|s(m)\|^2}{\|s\|_{\Gamma^2(L)}} \thinspace d\eta(m).
\]

\paragraph*{\bf Observables} Classical observables of $(M,\omega)$ correspond to square-integrable functions on $M$. In what follows $L^2(M)$ will denote the space of \textit{real} square-integrable functions on $M$; it is important that the functions considered are real and not complex. Prequantum observables correspond to unbounded self-adjoint operators on the Hilbert space $\Gamma^2(L)$; denote the group of such operators by $\mathfrak{u}(\Gamma^2(L))$. Prequantization seeks an assigment between classic and quantum observables satisfying \textit{Dirac's quantum conditions}:

\begin{definition}
    \label{def:prequant}
     Let $(M,\omega)$ be a translation surface with singularities $\Sigma \subseteq M$ and $L \to M$ be a Heisenberg line bundle with wavelength $\hbar > 0$. An unbounded linear operator
    \[
    \mathcal{O} \colon L^2(M) \to \mathfrak{u}(\Gamma^2(L))
    \]
    defined on a dense subspace $\mathrm{Dom}(\mathcal{O}) \supseteq C_0^\infty(M \setminus \Sigma)$ is said to be a \textit{prequantization} of $(M,\omega)$ if it satisfies Dirac's quantum conditions, i.e., if the following conditions hold:
    \begin{enumerate}
        \item If $f \in L^2(M)$ is a constant function then $f \in \mathrm{Dom}(\mathcal{O})$ and $\mathcal{O}(f) \in \mathfrak{u}(\Gamma^2(L))$ is the corresponding multiplication operator on sections.
        \item If $f,g \in C_0^\infty(M \setminus \Sigma)$ then
        \[
        [\mathcal{O}(f),\mathcal{O}(g)] := \mathcal{O}(f) \circ\mathcal{O}(g) - \mathcal{O}(f) \circ\mathcal{O}(g) = -i \hbar \mathcal{O}(\{f,g\}).
        \]
    \end{enumerate}
\end{definition}

\begin{remark}
    Prequantizations are not canonical; a given translation surface $(M,\omega)$ could admit multiple prequantizations. Definition \ref{def:prequant} does not address the issue that a prequantization could be too large to be physically meaningful; the process of choosing a subspace of a prequantization whose spectral information is physically meaningful is called \textit{quantization}. We will not discuss this subject in this paper.
\end{remark}

\begin{remark}
    Condition (2) in Definition \ref{def:prequant} is the main reason behind considering non-flat line bundles in Definition \ref{def:hcb}; one cannot hope for such non-commutativity relation without the non-vanishing curvature assumption on the line bundle.
\end{remark}

For our purposes we will consider the following prequantization:

\begin{definition}\label{def:geomprequant}
    Let $(M,\omega)$ be a translation surface with singularities $\Sigma \subseteq M$ and $L \to M$ be a Heisenberg line bundle with wavelength $\hbar > 0$ and connection $\nabla$. The \textit{geometric prequantization} of $(M,\omega)$ with respect to $L$ is the operator
    \[
    \mathcal{O} \colon L^2(M) \to \mathfrak{u}(\Gamma^2(L))
    \]
    which to every constant function assigns the corresponding multiplication operator and which to every smooth compactly supported function $f \in C_0^\infty(M \setminus \Sigma)$ assigns the self-adjoint operator $O(f) \in \mathfrak{u}(\Gamma^2(L))$ acting on smooth sections $s \colon M \to E$ by
    \[
    O(f).s := -i\hbar \nabla_{X_f} s + fs.
    \]
\end{definition}

\begin{proposition}
    Let $(M,\omega)$ be a translation surface and $L \to M$ be a Heisenberg line bundle. Then, the geometric prequantization of $(M,\omega)$ with respect to $L$ satisfies Dirac's quantum conditions, i.e., it is indeed a prequantization.
\end{proposition}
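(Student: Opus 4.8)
The plan is to verify the two Dirac conditions directly from the definition of the geometric prequantization $\mathcal O$. Condition (1) is immediate: by construction $\mathcal O$ sends a constant function to the associated multiplication operator. So the entire content is condition (2), the commutator relation $[\mathcal O(f),\mathcal O(g)]=-i\hbar\,\mathcal O(\{f,g\})$ for $f,g\in C_0^\infty(M\setminus\Sigma)$.

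First I would expand $\mathcal O(f).s = -i\hbar\nabla_{X_f}s + fs$ and compute $\mathcal O(f)\mathcal O(g).s$ by brute force, keeping track of the four resulting terms: the double covariant derivative term $(-i\hbar)^2\nabla_{X_f}\nabla_{X_g}s$, two cross terms of the form $-i\hbar\,g\,\nabla_{X_f}s$ and $-i\hbar\,\nabla_{X_f}(g\,s) = -i\hbar\big((X_f g)s + g\nabla_{X_f}s\big)$, and the term $fgs$. Subtracting $\mathcal O(g)\mathcal O(f).s$, the $fgs$ terms cancel, the symmetric cross terms $-i\hbar g\nabla_{X_f}s$ and $-i\hbar f\nabla_{X_g}s$ cancel against their counterparts, and what survives is $-\hbar^2[\nabla_{X_f},\nabla_{X_g}]s$ together with $-i\hbar\big((X_f g) - (X_g f)\big)s = -2i\hbar(X_f g)s$ — here using that $X_g f = df(X_g) = \eta(X_g,X_f) = -\eta(X_f,X_g) = -X_f g$ by antisymmetry of $\eta$, and $X_f g = dg(X_f) = \{f,g\}$. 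Wait, I should be careful with signs: with the convention $\eta(\cdot,X_f)=df$ one gets $\{f,g\}=\eta(X_f,X_g) = -\eta(X_g,X_f) = -df(X_g)$, and also $X_f g = dg(X_f)$; the relation between $dg(X_f)$ and $\eta(X_f,X_g)$ needs to be pinned down from the sign convention in Definition~\ref{def:poisson}, but in any case one finds $dg(X_f) - df(X_g) = \pm 2\{f,g\}$, and the factor of $2$ will combine with the curvature contribution to give the clean answer.

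The key geometric input is the curvature identity $[\nabla_{X_f},\nabla_{X_g}] = \nabla_{[X_f,X_g]} + \Omega(X_f,X_g)$ where $\Omega = \hbar^{-1}\eta$ is the curvature $2$-form of $\nabla$ (this is exactly the non-flatness condition imposed in Definition~\ref{def:hcb}). A standard symplectic computation gives $[X_f,X_g] = X_{\{f,g\}}$ (the Hamiltonian vector fields form a Lie algebra anti-homomorphism, or homomorphism, to functions under Poisson bracket — again up to a sign fixed by conventions). Substituting, the surviving terms assemble as $-\hbar^2\big(\nabla_{X_{\{f,g\}}}s + \hbar^{-1}\eta(X_f,X_g)s\big) + (\text{the }s\text{-term from the Poisson bracket})$; since $\eta(X_f,X_g)=\{f,g\}$, the $-\hbar^2\cdot\hbar^{-1}\{f,g\}s = -\hbar\{f,g\}s$ term and the $-i\hbar(\cdots)\{f,g\}s$ term from the first-order part must be reconciled — the point is that $-i\hbar\nabla_{X_{\{f,g\}}}s + \{f,g\}s = \mathcal O(\{f,g\}).s$, so after multiplying through one recovers precisely $-i\hbar\,\mathcal O(\{f,g\}).s$. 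The main obstacle, and the place demanding genuine care rather than routine manipulation, is bookkeeping of signs across three separate conventions simultaneously — the sign in $\eta(\cdot,X_f)=df$, the sign in $\{f,g\}=\eta(X_f,X_g)$, and the sign in the curvature of $\nabla$ being $+\hbar^{-1}\eta$ — so that the spurious factor of $2$ from the first-order terms is exactly absorbed. A secondary routine point is checking that $\mathcal O(f)$ is genuinely self-adjoint (symmetric on $C_0^\infty(M\setminus\Sigma)$), which follows from compatibility of $\nabla$ with the Hermitian metric together with the divergence-free property of the Hamiltonian vector field $X_f$ (it preserves $\eta$, since $\mathcal L_{X_f}\eta = d\iota_{X_f}\eta = -ddf = 0$), so that integration by parts produces no boundary or zeroth-order defect term; the compact support away from $\Sigma$ handles the singular points.
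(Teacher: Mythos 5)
Your proposal is correct and follows essentially the same route as the paper: expand the commutator directly, isolate $-\hbar^2[\nabla_{X_f},\nabla_{X_g}]s$ plus the first-order term $-i\hbar\big((X_fg)-(X_gf)\big)s=-2i\hbar\{f,g\}s$, and then use the curvature identity together with $\eta(X_f,X_g)=\{f,g\}$ and $[X_f,X_g]=-X_{\{f,g\}}$ (the paper's Cartan-identity computation fixes the sign you left open) to reassemble $-i\hbar\,\mathcal O(\{f,g\})$. Your added verification that $\mathcal O(f)$ is symmetric via $\mathcal L_{X_f}\eta=0$ is a reasonable supplement the paper omits, but it is not part of its argument.
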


\begin{proof}
    The first condition follows automatically. We now check the second condition. Let $\Sigma \subseteq M$ be the set of singularities of $(M,\omega)$. Consider functions $f,g \in C_0^\infty(M \setminus \Sigma)$. A direct computation using the Leibniz Rule and Definition \ref{def:poisson} shows that
    \begin{equation}
        \label{eq:B1}
        [\mathcal{O}(f) ,\mathcal{O}(g)] = -\hbar^2 (\nabla_{X_f} \nabla_{X_g}s - \nabla_{X_g} \nabla_{X_f} s) - 2 i \hbar \{f,g\}s.
    \end{equation}
    The definition of curvature gives
    \begin{equation}
        \label{eq:B2}
    \nabla_{X_f} \nabla_{X_g}s - \nabla_{X_g} \nabla_{X_f} s = -i\hbar^{-1}\eta(X_f,X_g) s - \nabla_{[X_f,X_g]} s.
    \end{equation}
    Again, by the definition of the Poisson bracket,
    \begin{equation}
        \label{eq:B3}
    \eta(X_f,X_g) = \{f,g\}.
    \end{equation}
    A direct computation using Cartan's identity shows that
    \begin{equation}
        \label{eq:B4}
    [X_f,X_g] = - X_{\{f,g\}}.
    \end{equation}
    Putting together \eqref{eq:B1}, \eqref{eq:B2}, \eqref{eq:B3}, and \eqref{eq:B4}, we conclude
    \[
    [\mathcal{O}(f) ,\mathcal{O}(g)] = -\hbar^2 \nabla_{X_{\{f,g\}}} s - i\hbar \{f,g\}s = -i \hbar \mathcal{O}(\{f,g\}). \qedhere
    \]
\end{proof}

\subsection{Prequantized dynamics.} We are not just interested in prequantizing the observables of a translation surface, but also the dynamics of its linear flows:

\begin{definition}\label{def:prequant:dynamics}
     Let $(M,\omega)$ be a translation surface, $L \to M$ be a Heisenberg line bundle, and $\mathcal{O} \colon L^2(M) \to \mathfrak{u}(\Gamma^2(L))$ be a prequantization of $(M,\omega)$. As usual, consider a linear vector field $W := aX+bY$ on $M$, with $(a,b) \neq (0,0)$, and recall that $\phi^W$ denotes the corresponding flow on $M$. A \textit{prequantization} of this flow with respect to $\mathcal{O}$ is a one-parameter subgroup $\mathcal{U}^W := \{\mathcal{U}^W_t\}_{t \in \mathbb{R}} \subseteq U(\Gamma^2(L))$ of bounded unitary operators
     \[
     \mathcal{U}^W_t \colon \Gamma^2(L) \to \Gamma^2(L)
     \]
     such that for every $t \in \mathbb{R}$ and every $f \in \mathrm{Dom}(\mathcal{O})$,
     \begin{equation}
     \label{eq:conj}
     (\mathcal{U}^W_t)^{-1} \circ \mathcal{O}(f) \circ \mathcal{U}^W_t = \mathcal{O}(f \circ \phi^W_t).
     \end{equation}
     The \textit{Hamiltonian} of such prequantization is the unique unbounded self-adjoint operator $\mathcal H^W$ generating $\mathcal{U}^W$, i.e., such that for every $t \in \mathbb{R}$,
     \[
     \mathcal{U}^W_t = e^{-it\mathcal {H^W}}.
     \]
\end{definition}

\paragraph*{\bf Geometric prequantizations} In the case of geometric prequantizations we consider the following construction:

\begin{definition}
    Let $(M,\omega)$ be a translation surface, $L \to M$ be a Heisenberg line bundle, and $W := aX+bY$ with $(a,b) \neq (0,0)$ be a linear vector field on $M$. Recall that $\smash{\Phi^{\wW}}$ denotes the flow on $L$ induced by $\phi^W$ via parallel transport. Consider the flow
\[
\Phi^{\wW,*} := \left\lbrace\smash{\Phi_t^{\wW,*}} \colon \Gamma^2(L) \to \Gamma^2(L) \right\rbrace_{t \in \mathbb{R}}
\]
defined for every $s \in \Gamma^2(L)$ and every $t \in \mathbb{R}$ by
\[
\Phi^{\wW,*}_t.s := \Phi_t^{\wW} \circ s \circ \phi_{-t}^{W}.
\]
We refer to $\Phi^{\wW,*}$ as the \textit{geometric prequantization} of $\phi^W$ with respect to $L$.
\end{definition}

\begin{proposition}
    \label{prop:prequan_dyn}
    Let $(M,\omega)$ be a translation surface, $L \to M$ be a Heisenberg line bundle, and $\mathcal{O} \colon L^2(M) \to \mathfrak{u}(\Gamma^2(L))$ be the geometric prequantization of $(M,\omega)$ with respect to $L$. Then, for every linear vector field $W := aX + bY$ on $M$ with $(a,b) \neq (0,0)$, the flow $\smash{\Phi^{\wW,*}}$ is a prequantization of $\phi_W$ with respect to $\mathcal{O}$ whose Hamiltonian is $\nabla_W$.
\end{proposition}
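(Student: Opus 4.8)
The plan is to verify the two assertions of Proposition~\ref{prop:prequan_dyn} separately: first that $\Phi^{\wW,*}$ is a one-parameter group of unitary operators on $\Gamma^2(L)$ satisfying the conjugacy relation~\eqref{eq:conj}, and second that its Stone generator is (up to the appropriate sign/constant) the covariant derivative $\nabla_W$. For the group property, note that $\Phi^{\wW,*}_t \circ \Phi^{\wW,*}_\sigma$ applied to $s$ gives $\Phi^{\wW}_t \circ (\Phi^{\wW}_\sigma \circ s \circ \phi^W_{-\sigma}) \circ \phi^W_{-t} = \Phi^{\wW}_{t+\sigma} \circ s \circ \phi^W_{-(t+\sigma)}$ using that $\Phi^{\wW}$ is the parallel transport flow covering $\phi^W$, so it is a genuine flow on $L$; unitarity follows because parallel transport along $\nabla$ is an isometry of fibers (the connection is compatible with the Hermitian structure, cf. Definition~\ref{def:hcb}) and $\phi^W_{-t}$ preserves the area form $\eta$, hence $\|\Phi^{\wW,*}_t.s\|_{\Gamma^2(L)} = \|s\|_{\Gamma^2(L)}$. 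Strong continuity in $t$ is a routine approximation argument, so Stone's theorem produces a self-adjoint generator $\mathcal H^W$.

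Next I would identify the generator. Differentiating $\Phi^{\wW,*}_t.s = \Phi^{\wW}_t \circ s \circ \phi^W_{-t}$ at $t=0$ for a smooth section $s$ splits into two contributions by the Leibniz rule: the term from differentiating $\phi^W_{-t}$ produces $-\nabla_W s$ in the sense of the directional derivative of $s$ along $-W$, while the term from differentiating the parallel-transport maps $\Phi^{\wW}_t$ produces the connection-coefficient correction that upgrades the directional derivative of the section into the covariant derivative. Concretely, in a local trivialization where $\nabla_\cdot = d_\cdot - i\theta(\cdot)/\hbar$ as in~\eqref{eq:con}, the horizontal lift that defines $\Phi^{\wW}$ is generated exactly by the connection $1$-form, so the two pieces combine to give $\frac{d}{dt}\big|_{t=0}\Phi^{\wW,*}_t.s = -\nabla_W s$. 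Hence $\mathcal U^W_t = e^{-it\mathcal H^W}$ with $i\mathcal H^W = -\nabla_W$ up to the conventional factors; identifying $\mathcal H^W$ with $\nabla_W$ as stated then amounts to bookkeeping with the $i$'s and $\hbar$'s in Definition~\ref{def:geomprequant}.

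For the conjugacy relation~\eqref{eq:conj}, the key computation is to apply $(\mathcal U^W_t)^{-1} \circ \mathcal O(f) \circ \mathcal U^W_t$ to a smooth section $s$ and unwind it using $\mathcal O(f).s = -i\hbar\nabla_{X_f}s + fs$. The multiplication part transforms as $(\mathcal U^W_t)^{-1}\big((f)\cdot(\mathcal U^W_t s)\big) = (f\circ\phi^W_t)\cdot s$ since conjugating a multiplication operator by a unitary induced by a (twisted) point transformation precomposes the multiplier with that transformation. The covariant-derivative part requires the naturality of the connection under the flow: because $\Phi^{\wW}$ is parallel transport along $\phi^W$, one has $(\Phi^{\wW}_t)^{-1}\circ\nabla_{X_f}\circ\Phi^{\wW}_t$ acting on sections equals $\nabla_{(\phi^W_{-t})_*X_f}$, and $(\phi^W_{-t})_* X_f = X_{f\circ\phi^W_t}$ because $\phi^W$ preserves $\eta$ and hence intertwines Hamiltonian vector fields with the corresponding pushed-forward functions. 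Combining these two pieces yields exactly $\mathcal O(f\circ\phi^W_t).s$.

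\textbf{Main obstacle.} The one genuinely delicate point is the second identity in the last paragraph: showing that conjugating $\nabla_{X_f}$ by the prequantized flow $\Phi^{\wW,*}$ produces $\nabla_{X_{f\circ\phi^W_t}}$. This needs (i) the equivariance of the connection under parallel transport along $\phi^W$-orbits — i.e. that $\Phi^{\wW}_t$ really is the horizontal lift of $\phi^W_t$, which is essentially the definition but must be used carefully at the level of covariant derivatives rather than just transport of points — and (ii) the compatibility $(\phi^W_{-t})_* X_f = X_{f\circ\phi^W_t}$, which relies on $\phi^W$ being area-preserving away from the singular set $\Sigma$, together with a check that the compactly supported hypothesis on $f$ keeps everything well away from $\Sigma$ so that no boundary or singularity contributions arise. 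Once this equivariance is in hand the rest is formal manipulation with Stone's theorem and the explicit form of $\mathcal O$.
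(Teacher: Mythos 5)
Your proposal is correct and follows essentially the same route as the paper: unitarity from parallel transport preserving the Hermitian inner product together with $\phi^W$ preserving $\eta$, the generator identified as $\nabla_W$ by differentiating the pullback flow, and the conjugacy relation \eqref{eq:conj} reduced to exactly the two identities the paper isolates, namely $(\phi^W_{-t})_*X_f = X_{f\circ\phi^W_t}$ (from $\eta$-invariance) and the equivariance of $\nabla$ under the horizontal lift, which are \eqref{eq:identity1} and \eqref{eq:identity2}. The point you flag as delicate is precisely the one the paper dispatches with ``general Riemannian geometry considerations,'' so your treatment is, if anything, slightly more explicit.
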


\begin{proof}
    The operators in $\smash{\Phi^{\wW,*}}$ are unitary because parallel transport preserves inner products. Furthermore, it follows directly from the definition of parallel transport that their generator is $\nabla_W$. Let $\Sigma \subseteq M$ be the set of singularities of $(M,\omega)$. To check \eqref{eq:conj} we fix $t \in \mathbb{R}$, $f \in C^\infty_0(M \setminus \Sigma)$, and a smooth section $s \colon M \to L$. Using the fact that the singular symplectic form $\eta := \frac{i}{2}(\omega \wedge \overline{\omega})$ is preserved by the flow $\phi^W_t$, one can show that, for every $t \in \mathbb{R}$, the Hamiltonian vector field of $f \circ \phi_t$ is given by
    \begin{equation}
        \label{eq:identity1}
        X_{f \circ \phi_t^W} = d(\phi_t^W)^{-1}.X_f .
    \end{equation}
    Furthermore, by general Riemannian geometry considerations, we have
    \begin{equation}
        \label{eq:identity2}
        \nabla_{X_f}(\Phi_t^{\wW} \circ s \circ \phi_{-t}^{W}) = \Phi_t^{\wW} \circ (\nabla_{d\phi_t^{-1} X_f} s) \circ \phi_{-t}^{W}.
    \end{equation}
    Putting together \eqref{eq:identity1} and \eqref{eq:identity2} we conclude
    \begin{align*}
    ((\mathcal{U}^W_t)^{-1} \circ \mathcal{O}(f) \circ \mathcal{U}^W_t).s &= - i \hbar \cdot \Phi_t^{\wW} \circ \nabla_{X_f}(\Phi_t^{\wW} \circ s \circ \phi_{-t}^{W}) \circ \phi_{t}^W + (f \circ \phi_t).s \\
    &= -i \hbar \cdot \nabla_{d(\phi_t^W)^{-1} X_f} s + (f \circ \phi_t^W).s \\
    &= -i \hbar \cdot \nabla_{X_{f \circ \phi_t^W}} s + (f \circ \phi_t^W).s\\
    &= \mathcal{O}(f \circ \phi_t^W).s. \qedhere
    \end{align*}
\end{proof}

\begin{remark}
    Proposition \ref{prop:prequan_dyn} leads to the following interpretation: Given a translation surface $(M,\omega)$, a Heisenberg line bundle $L \to M$, and a linear vector field $W := aX + bY$ on $M$ with $(a,b) \neq (0,0)$, the operator $\nabla_W$ on $\Gamma^2(L)$ is the prequantization of momentum along the linear flow $\phi^W$ on $M$. 
\end{remark}


\section{Dynamics on wave functions}\label{sec:dynwave}

We now turn to explain how to apply the dynamical and spectral results proved in Sections \ref{sec:heisenberg:flow} and \ref{sec:spectral} to study the prequantized model introduced in Section \ref{sec:prequant}. For this purpose we first define the notion of \textit{wave transform}.

\subsection{Flows.} As above, let $(M,\omega)$ be a translation surface with $X$ and $Y$ the vertical and horizontal vector fields induced by $\omega$ on $M$. Given $W := aX+bY$ with $(a,b) \not = (0,0)$, the linear flow $\phi^W$ on $M$ induces a unitary flow $\phi^{W,*}$ on $L^2(M) := L^2(M, \eta)$ by
\[
\phi^{W,*}_t.f := f \circ \phi_{-t}^{W}, \quad \text{where } t \in \mathbb{R} \text{ and }  f \in L^2(M).
\]
Let $L \to M$ be a Heisenberg line bundle and recall that
\[
\Phi^{\wW} := \left\lbrace\smash{\Phi_t^{\wW}} \colon L \to L \right\rbrace_{t \in \mathbb{R}}
\]
is the flow induced by parallel transport along the orbits of $\phi^{W}$. Recall that $\Gamma^2(L)$ denotes the Hilbert space of square-integrable sections of $L$ and denote by
\[
\Phi^{\wW,*} := \left\lbrace\smash{\Phi_t^{\wW,*}} \colon \Gamma^2(L) \to \Gamma^2(L) \right\rbrace_{t \in \mathbb{R}}
\]
the natural unitary pullback flow given by
\[
\Phi^{\wW,*}_t.s := \Phi_t^{\wW} \circ s \circ \phi_{-t}^{W}, \quad \text{where }t \in \mathbb{R} \text{ and }  s \in \Gamma^2(L).
\]
Letting $C$ be the associated Heisenberg circle bundle on $M$ induced by $L$, we have a unitary flow $\Phi^{\wW,\circ}$ on $L^2(C) := L^2(C,\mu)$ given by
\[
\Phi^{\wW,\circ}_t.f := f \circ \Phi_{-t}^{\wW}, \quad \text{where } t \in \mathbb{R} \text{ and }  f \in L^2(C).
\]

\subsection{The wave transform.} Theorem \ref{thm:rel_mix_new} shows that the flows $\smash{\Phi^{\wW}}$ on $C$ are relatively mixing. This statement pertains to functions in $L^2(C)$ but from the point of view of quantum mechanics what we care about is the dynamics of the flow $\smash{\Phi^{\wW,*}}$ on the space of square-integrable sections $\Gamma^2(L)$, i.e., the prequantum space of wave functions. 

We now show how to connect these a priori distinct viewpoints via a family of operators we call \textit{wave transforms}. These operators map elements of $\Gamma^2(L^k)$, the space of square-integrable sections of the $k$-th tensor power $L^k$ of the line bundle $L$, to the eigenspaces $E_{-k}$ of the flow $\smash{\Phi^{\wW,\circ}}$ on $L^2(C)$. We recall that $L^2(M) \subseteq L^2(C)$ denotes the subspace of functions on $C$ constant along fibers and that $L^2_*(C) \subseteq L^2(C)$ denotes the subspace of functions on $C$ of zero integral along $\eta$-almost-every fiber of $C$, so that
\[
L^2(C) = L^2(M) \oplus L^2_*(C) \quad \text{and} \quad L^2(M)\perp L^2_*(C).
\]
In fact, we can write
\[
L^2(M)=E_0 \quad \text{and} \quad L^2_*(C) = \bigoplus_{k\in \Z\setminus\{0\}} E_k,
\]
where $E_k$ is the $k$-th eigenspace of the flow generated by the vector field
$\widehat{R}$. The hermitian product gives an identification of $L$ with the dual bundle $L^*$ and therefore an identification of the $k$-th tensor powers $\smash{(L^*)^k}$ and $\smash{L^k}$, which
can be equivalently identified via a canonical extension
$\langle \cdot, \cdot \rangle_k$ of the hermitian product to $L^k$. 

\begin{definition}
    Let $(M,\omega)$ be a translation surface and $L \to M$ be a Heisenberg line bundle. For every $k\in \Z\setminus \{0\}$ we define the \textit{$k$-th wave transform}
    \begin{align*}
    \mathcal{W}_k \colon \Gamma^2(L^k) &\to L^2(C)
    \end{align*}
    for every $s \in \Gamma^2(L^k)$ and every $(m,z) \in C$ by
    $$\mathcal{W}_k[s](m,z) := \langle s(x),z^{\otimes k} \rangle_k.$$
\end{definition}

The following is the main result of this section:

\begin{theorem}
    \label{theo:iso}
     Let $(M,\omega)$ be a translation surface, $W := aX+bY$ with $(a,b) \not = (0,0)$ be a vector field on $M$, and $L \to M$ be a Heisenberg line bundle. Then, for every $k\in \Z\setminus \{0\}$, the $k$-th wave transform is an operator
    \begin{align*}
    \mathcal{W}_k \colon \Gamma^2(L^k) &\to E_{-k}
    \end{align*}
    that is a unitary equivalence of Hilbert spaces intertwining the flows $\Phi^{\wW,*}$ and $\Phi^{\wW,\circ}$.
\end{theorem}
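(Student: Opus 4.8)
The plan is to verify the three claims in turn: (i) $\mathcal W_k$ lands in $E_{-k}$; (ii) it is a unitary isomorphism onto $E_{-k}$; (iii) it intertwines $\Phi^{\wW,*}$ and $\Phi^{\wW,\circ}$. All three follow from chasing the definitions through the circle-bundle picture, where a point of $C$ over $m$ is a unit vector $z \in L_m$, and the fiber action $\Phi^{\wR}_t$ rotates $z$ by $e^{2\pi i t}$.

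\textbf{Step 1: $\mathcal W_k$ maps into $E_{-k}$.} First I would record how the fiber rotation acts: if $\Phi^{\wR}_t$ sends $(m,z)$ to $(m, e^{2\pi i t} z)$, then $z^{\otimes k}$ becomes $e^{2\pi i k t} z^{\otimes k}$, so by sesquilinearity of $\langle \cdot,\cdot\rangle_k$ in the second slot we get $\mathcal W_k[s](m, e^{2\pi i t} z) = \overline{e^{2\pi i k t}}\,\langle s(m), z^{\otimes k}\rangle_k = e^{-2\pi i k t}\mathcal W_k[s](m,z)$, i.e. $\mathcal W_k[s]\circ \Phi^{\wR}_t = e^{-2\pi i k t}\mathcal W_k[s]$, which is exactly the defining condition for $E_{-k}$. (One should fix the convention so the conjugation falls on the right argument; if it falls on the left, replace $k$ by $-k$ throughout, which is consistent with the target being $E_{-k}$.)

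\textbf{Step 2: Unitarity and surjectivity.} For $s \in \Gamma^2(L^k)$ and $m \in M$, writing $\|s(m)\|_k$ for the norm induced by $\langle\cdot,\cdot\rangle_k$, the value $|\mathcal W_k[s](m,z)|^2$ as $z$ ranges over the unit circle $C_m$ averages (against normalized Lebesgue measure on the fiber) to a fixed multiple of $\|s(m)\|_k^2$: indeed, choosing a local unit section $z_0$, every $z \in C_m$ is $e^{2\pi i\theta}z_0$, and $\int_0^1 |\langle s(m), e^{2\pi ik\theta} z_0^{\otimes k}\rangle_k|^2 d\theta = |\langle s(m), z_0^{\otimes k}\rangle_k|^2 = \|s(m)\|_k^2$ after evaluating the constant (here one uses $k \neq 0$ only insofar as the fiber-average of the oscillatory phase integrates correctly; the computation is a one-line Fubini). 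Integrating over $M$ against $\eta$ and using the disintegration of $\mu$ as $\eta$ times fiber-Lebesgue gives $\|\mathcal W_k[s]\|_{L^2(C)}^2 = \|s\|_{\Gamma^2(L^k)}^2$, so $\mathcal W_k$ is an isometry; it is clearly linear. For surjectivity onto $E_{-k}$: any $f \in E_{-k}$ satisfies $f(m, e^{2\pi i\theta} z) = e^{-2\pi ik\theta} f(m,z)$, so on each fiber $f$ is determined by a single element of $(L_m^*)^{\otimes k} \cong L_m^{\otimes k}$, namely $m \mapsto$ the section $s(m)$ with $\langle s(m), z^{\otimes k}\rangle_k = f(m,z)$; one checks $s$ is measurable and square-integrable by reversing the isometry computation, and $\mathcal W_k[s] = f$. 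Hence $\mathcal W_k$ is a unitary equivalence $\Gamma^2(L^k) \to E_{-k}$.

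\textbf{Step 3: Intertwining.} Here one unwinds the two flows. By definition $(\Phi^{\wW,*}_t s)(m) = \Phi^{\wW}_t\big(s(\phi^W_{-t}(m))\big)$, where $\Phi^{\wW}_t$ is parallel transport of $L$ along the orbit from $\phi^W_{-t}(m)$ to $m$; the induced map on $k$-th tensor powers is parallel transport in $L^k$. On the other side, $(\Phi^{\wW,\circ}_t f)(m,z) = f(\Phi^{\wW}_{-t}(m,z))$, and $\Phi^{\wW}_{-t}(m,z) = (\phi^W_{-t}(m), \tau z)$ where $\tau \colon L_m \to L_{\phi^W_{-t}(m)}$ is parallel transport backwards; crucially $\tau$ is a linear \emph{isometry} of fibers (parallel transport preserves the Hermitian structure, as used already in Proposition~\ref{prop:prequan_dyn}), hence unitary. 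Then
\begin{align*}
\mathcal W_k[\Phi^{\wW,*}_t s](m,z)
&= \langle \Phi^{\wW}_t(s(\phi^W_{-t}(m))), z^{\otimes k}\rangle_k \\
&= \langle s(\phi^W_{-t}(m)), (\Phi^{\wW}_t)^{-1}(z^{\otimes k})\rangle_k \\
&= \langle s(\phi^W_{-t}(m)), (\tau z)^{\otimes k}\rangle_k \\
&= \mathcal W_k[s](\phi^W_{-t}(m), \tau z) = (\Phi^{\wW,\circ}_t \mathcal W_k[s])(m,z),
\end{align*}
where the second equality uses that parallel transport on $L^k$ is a Hermitian isometry so its adjoint is its inverse, and the third uses $(\Phi^{\wW}_t)^{-1} = \tau^{\otimes k}$ at the level of $k$-th powers. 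This is precisely $\mathcal W_k \circ \Phi^{\wW,*}_t = \Phi^{\wW,\circ}_t \circ \mathcal W_k$.

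\textbf{Anticipated obstacle.} The computations are all routine linear algebra on fibers; the only genuine care points are bookkeeping of the sign/orientation convention (which determines whether the image is $E_{-k}$ or $E_k$, and whether parallel transport or its inverse appears), and the mild measure-theoretic point that $\mathcal W_k[s]$ is a bona fide $L^2(C)$ function — handled by the disintegration of $\mu$ and Fubini, exactly as in Step 2. There are no analytic difficulties; the substance of the section is really Theorem~\ref{thm:rel_mix_new}, which this unitary equivalence then transports from $L^2(C)$ to $\Gamma^2(L^k)$.
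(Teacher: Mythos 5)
Your proposal is correct and follows essentially the same route as the paper: the same fiber-rotation computation for the image in $E_{-k}$, the same fiberwise inner-product/norm computation for unitarity (the paper computes $\langle \mathcal W_k[s_1],\mathcal W_k[s_2]\rangle$ directly via the one-dimensional identity $\langle z_1,z_3\rangle\overline{\langle z_2,z_3\rangle}=\langle z_1,z_2\rangle\langle z_3,z_3\rangle$, whereas you compute norms and implicitly polarize — a cosmetic difference), the same construction of a preimage section $s_f$ from a reference unit section for surjectivity, and the same use of parallel transport being a Hermitian isometry for the intertwining. No gaps.
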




\begin{proof}
    
    We first check that $\mathcal{W}_k$ preserves inner products. We use the fact that on any one-dimensional complex Hilbert space $V$ the following identity holds:
    \begin{equation*}
    \label{eq:identity}
    \langle z_1, z_3\rangle \overline{\langle z_2,z_3 \rangle} = \langle z_1,z_2 \rangle \langle z_3,z_3 \rangle, \quad \text{for all } z_1,z_2,z_3 \in V.
    \end{equation*}
    It follows that, denoting by $\rho_m$ the fiberwise measures on $C$, for every $s_1,s_2 \in \Gamma^2(L^k)$, 
    \begin{align*}
        \left\langle\mathcal{W}_k[s_1],\mathcal{W}_k[s_2]\right\rangle_{L^2(C)} &= \int_M \int_{L^k_m} \langle s_1(m),z^{\otimes k} \rangle_k \overline{\langle s_2(m),z^{\otimes k} \rangle_k} \thinspace d\rho_m(z) \thinspace d\eta(m) \\
        &= \int_M \int_{C^k_m} \langle s_1(m),s_2(m) \rangle_k \thinspace d\rho_m(z) \thinspace d\eta(m)\\
        &= \int_M \langle s_1(m),s_2(m) \rangle_k \thinspace d\eta(m)\\
        &= \langle s_1,s_2\rangle_{\Gamma^2(L^k)}.
    \end{align*}

    The inclusion the image of $\mathcal{W}_k$ in $E_{-k}$ can be derived as follows: For every $s \in \Gamma^2(L^k)$, every $t \in \mathbb{R}$, and every $(m,z) \in C$, writing $z:= e^{2\pi i \rho}$ with $\rho \in \mathbb{R}$,
$$
\begin{aligned}
(\mathcal{W}_k(s) \circ \Phi^{\wR}_t) (m,z) &=\langle s(m), (e^{2\pi it} e^{2\pi i\rho})^k\rangle_k \\ &= e^{-2\pi ikt} \langle s(m), e^{2\pi i \rho k}\rangle_k \\
&= e^{-2\pi ikt}\mathcal{W}_k(s)(m,z).
\end{aligned}
$$

We now prove that $\mathcal{W}_k$ surjects onto $E_{-k}$. Fix a square-integrable section $s_0 \colon M \to L$. Given any $f\in E_{-k}$, let $s_f \colon M \to L^k$ be the unique section determined by the condition
$$
\langle s_f(m), s_0^{\otimes k}(m) \rangle_k = f(s_0(m)) \, \quad \text{ for $\eta$-almost-every } m \in M \,.
$$
We claim that $\mathcal{W}_k[s_f] =f$. Indeed, given $(m,z) \in C$ we write $z = e^{2 \pi i \rho} s_0(m)$ for some $\rho \in \mathbb{R}$. Since $f \in E_{-k}$ we have 
$$
\begin{aligned}
\mathcal{W}_k[s_f](m,z)
&= \langle s_f(m), e^{2\pi i k\rho} s_0^{\otimes k}(m) \rangle_k \\
&= e^{-2 \pi ik \rho} \langle s_f(m), s_0^{\otimes k}(m) \rangle_k \\
&= e^{-2 \pi ik \rho} f(s_0(m)) \\
&= f(e^{2 \pi i \rho}s_0(m))\\
&= f(m,z).
\end{aligned}$$

    Finally, to show that $\mathcal{W}_k$ intertwines the flows $\Phi^{\wW,*}$ and $\Phi^{\wW,\circ}$ we use the fact that the parallel transport induced by $\nabla$ preserves inner products. Indeed, for any $s \in \Gamma^2(L^k)$, any $t \in \mathbb{R}$, and any $(m,z) \in C$, we have
    \begin{align*}
    \mathcal{W}_k\left[\Phi^{\wW,*}_t.s \right](m, z) &= \left\langle \left(\Phi^{\wW,*}_t.s\right)(m),z^{\otimes k} \right\rangle_k \\
    &= \left\langle \Phi_t^{\wW}.s(\phi_{-t}^{W}.m ),z^{\otimes k}\right\rangle_k\\
    &= \left\langle s(\phi_{-t}^{W}.m), \Phi_{-t}^{\wW}(z^{\otimes k})\right\rangle_k\\
    &= \mathcal{W}_k[s]\left(\Phi_{-t}^{\wW}(m,z)\right)\\ 
    &= \left(\Phi^{\wW,\circ}_t.\mathcal{W}_k[s]\right)(m,z). \qedhere
    \end{align*}
\end{proof}

\subsection{Decay of correlations.} As a direct consequence of Theorems \ref{thm:rel_mix_new} and \ref{theo:iso} we deduce that prequantum wave functions become uncorrelated under time evolution:

\begin{corollary}
    \label{cor:decay}
    Let $(M,\omega)$ be a translation surface, $W := aX+bY$ with $(a,b) \not = (0,0)$ be a vector field on $M$, and $L \to M$ be a Heisenberg line bundle. Then, for $s_1,s_2 \in \Gamma^2(L)$,
    \[
    \lim_{t \to \infty} \left\langle \Phi_t^{\wW,*}.s_1,s_2 \right\rangle_{\Gamma^2(L)} = 0.
    \]
\end{corollary}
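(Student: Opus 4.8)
The plan is to deduce this directly from Theorem~\ref{theo:iso} (the unitary equivalence $\mathcal{W}_1$) together with the relative mixing statement of Theorem~\ref{thm:rel_mix_new}. First I would apply the $1$-st wave transform $\mathcal{W}_1 \colon \Gamma^2(L) \to E_{-1} \subseteq L^2(C)$, which by Theorem~\ref{theo:iso} is a unitary isomorphism of Hilbert spaces intertwining the prequantized flow $\Phi^{\wW,*}$ on $\Gamma^2(L)$ with the Koopman flow $\Phi^{\wW,\circ}$ on $L^2(C)$. Hence, setting $f_i := \mathcal{W}_1[s_i] \in E_{-1}$, we have
\[
\left\langle \Phi_t^{\wW,*}.s_1, s_2 \right\rangle_{\Gamma^2(L)} = \left\langle \Phi_t^{\wW,\circ}.f_1, f_2 \right\rangle_{L^2(C)} = \left\langle f_1 \circ \Phi_{-t}^{\wW}, f_2 \right\rangle_{L^2(C)}.
\]

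Next I would observe that $E_{-1} \subseteq L^2_*(C)$, since $E_{-1}$ consists of functions with nonzero Fourier mode along the fibers and thus of zero integral along $\eta$-almost-every fiber. Therefore $f_1 \in L^2_*(C)$ and $f_2 \in L^2(C)$, so Theorem~\ref{thm:rel_mix_new} applies (using that $(a,b) \neq (0,0)$) and gives
\[
\lim_{t \to \pm\infty} \left\langle f_1 \circ \Phi_{-t}^{\wW}, f_2 \right\rangle_{L^2(C)} = 0,
\]
after noting that replacing $t$ by $-t$ is harmless since the relative mixing conclusion holds for both $t \to +\infty$ and $t \to -\infty$; equivalently one rewrites the pairing as $\overline{\langle f_2, f_1 \circ \Phi_{-t}^{\wW}\rangle}$ and applies the theorem with the roles of the two arguments arranged to match its hypotheses. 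Combining the two displays yields the claim.

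There is no substantial obstacle here: the content is entirely in Theorems~\ref{thm:rel_mix_new} and~\ref{theo:iso}, and the corollary is a formal consequence. The only point requiring a word of care is the identification $E_{-1} \subseteq L^2_*(C)$ and the bookkeeping of which slot of the inner product carries the $L^2_*$ hypothesis after conjugating by $\mathcal{W}_1$ and substituting $t \mapsto -t$; both are immediate from the definitions of $E_n$ and $L^2_*(C)$ and from the two-sided nature of the limit in Theorem~\ref{thm:rel_mix_new}. If a quantitative rate is desired, one would instead invoke the Sobolev bound in the second half of Theorem~\ref{thm:rel_mix_new}, transporting the Sobolev norms on $\Gamma^2(L)$ to $H^{\alpha,0}_\omega(C)$ via $\mathcal{W}_1$, but for the stated qualitative decay this is not needed.
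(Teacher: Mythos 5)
Your proof is correct and is exactly the argument the paper intends: conjugate by the unitary intertwiner $\mathcal{W}_1 \colon \Gamma^2(L) \to E_{-1}$ from Theorem~\ref{theo:iso}, note $E_{-1} \subseteq L^2_*(C)$, and invoke Theorem~\ref{thm:rel_mix_new} (using $\Phi^{\wW}$-invariance of $\mu$, or the two-sidedness of the limit, to handle the sign of $t$). The paper states the corollary as a direct consequence of these two theorems without further detail, so your write-up matches its approach.
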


\subsection{Spectra.} As a direct consequence of Theorems \ref{prop:spectra} and \ref{theo:iso} we deduce information about the spectrum of the prequantized momentum operators of translation flows:

\begin{corollary}
    Let $(M,\omega)$ be a translation surface, $W := aX+bY$ with $(a,b) \not = (0,0)$ be a vector field on $M$, and $L \to M$ be a Heisenberg line bundle. Then, the prequantized momentum operator $\nabla_W$ on $\Gamma^2(L)$ has absolutely continuous spectrum, and, if the translation flow $\phi^{W}$ on $M$ is aperiodic, then the spectrum has no gaps, i.e., the maximal spectral type of $\nabla_W$ on $\Gamma^2(L)$ has full support.
\end{corollary}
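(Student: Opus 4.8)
The plan is to deduce this corollary directly from the two results cited, namely Theorem~\ref{prop:spectra} on the spectrum of Heisenberg translation flows on circle bundles and Theorem~\ref{theo:iso} on the wave transform. The key observation is that for $k=1$ the wave transform $\mathcal{W}_1 \colon \Gamma^2(L) \to E_{-1}$ is a unitary equivalence of Hilbert spaces intertwining the pullback flow $\Phi^{\widehat{W},*}$ on sections with the Koopman flow $\Phi^{\widehat{W},\circ}$ restricted to the eigenspace $E_{-1} \subseteq L^2_*(C)$. Since $\nabla_W$ is (by Proposition~\ref{prop:prequan_dyn}) the generator of the flow $\Phi^{\widehat{W},*}$ up to the expected constant, and the generator of $\Phi^{\widehat{W},\circ}$ on $E_{-1}$ is the restriction of the Koopman Hamiltonian of $\Phi^{\widehat{W}}$ to $E_{-1}$, the unitary equivalence transports all spectral information.

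Concretely, I would proceed as follows. First, record that $E_{-1}$ is an invariant subspace for the Koopman group of $\Phi^{\widehat{W}}$ (as noted in the discussion preceding Theorem~\ref{prop:spectra}, each $E_n$ is $\Phi^{\widehat{W}}$-invariant by Proposition~\ref{prop:bracket}), and that $-1 \in \mathbb{Z}\setminus\{0\}$. Then Theorem~\ref{prop:spectra} applies to $E_{-1}$: since $(a,b)\neq(0,0)$, the spectrum of $\Phi^{\widehat{W}}$ on $E_{-1}$ is absolutely continuous, and if $\phi^W$ is aperiodic then this spectrum has no gaps, i.e.\ the maximal spectral type on $E_{-1}$ has full support. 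Next, invoke Theorem~\ref{theo:iso} with $k=1$: the unitary $\mathcal{W}_1$ conjugates $\Phi^{\widehat{W},*}_t$ to $\Phi^{\widehat{W},\circ}_t|_{E_{-1}}$ for every $t$, hence conjugates the corresponding one-parameter unitary groups, hence (by uniqueness of the generator in Stone's theorem, up to the fixed scalar relating $\nabla_W$ to the generator) conjugates $\nabla_W$ on $\Gamma^2(L)$ to the Koopman Hamiltonian of $\Phi^{\widehat{W}}$ restricted to $E_{-1}$. Spectral type, absolute continuity, and support of the maximal spectral type are all invariants of unitary conjugacy, so the conclusions of Theorem~\ref{prop:spectra} on $E_{-1}$ transfer verbatim to $\nabla_W$.

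There is essentially no hard step here; the argument is a two-line transport of structure. The only point requiring a modicum of care is bookkeeping the scalar and sign in the identification of $\nabla_W$ with the generator of $\Phi^{\widehat{W},*}$ (Proposition~\ref{prop:prequan_dyn} states the Hamiltonian of the prequantized flow is $\nabla_W$, so $\Phi^{\widehat{W},*}_t = e^{-it\nabla_W}$ in the conventions of the paper), and then noting that multiplying a self-adjoint operator by a nonzero real constant does not change whether its spectrum is absolutely continuous or whether its maximal spectral type has full support. One should also make explicit that $\mathcal{W}_1$ being a \emph{surjective} unitary onto $E_{-1}$ — not merely an isometry into $L^2(C)$ — is exactly what Theorem~\ref{theo:iso} provides, so that no part of the spectrum is lost in the transfer. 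With these remarks in place the corollary follows immediately.
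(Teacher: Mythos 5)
Your argument is exactly the one the paper intends: the corollary is stated there as a direct consequence of Theorem~\ref{prop:spectra} (applied to the invariant eigenspace $E_{-1}$ with $-1\in\mathbb{Z}\setminus\{0\}$) and Theorem~\ref{theo:iso} (the unitary, intertwining wave transform $\mathcal{W}_1\colon\Gamma^2(L)\to E_{-1}$), and your write-up fills in precisely that transport-of-structure step. Your added care about surjectivity of $\mathcal{W}_1$ and the scalar relating $\nabla_W$ to the Stone generator is appropriate and introduces no gap.
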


\subsection{Consequences.} Let us now explore some consequences of Corollary \ref{cor:decay}. First, we observe this result can be directly extended to finite dimensional orthogonal projections:

\begin{corollary}
    \label{cor:proj}
    Let $(M,\omega)$ be a translation surface, $W := aX+bY$ with $(a,b) \not = (0,0)$ be a vector field on $M$,  $L \to M$ be a Heisenberg line bundle, and $P \colon \Gamma^2(L) \to \Gamma^2(L)$ be an orthogonal projection to a finite dimensional subspace. Then, for every $s \in \Gamma^2(L)$,
    \[
    \lim_{t \to \infty} P\left(\Phi_t^{\wW,*}.s\right) = 0.
    \]
\end{corollary}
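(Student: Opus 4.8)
The plan is to reduce the statement to Corollary~\ref{cor:decay} by expanding the projection in an orthonormal basis of its image, which is finite-dimensional by hypothesis.

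First I would set $V := P(\Gamma^2(L)) \subseteq \Gamma^2(L)$, let $N := \dim V < \infty$, and fix an orthonormal basis $e_1,\dots,e_N$ of $V$. For any $s \in \Gamma^2(L)$ and any $t \in \mathbb{R}$ we then have
\[
P\left(\Phi_t^{\wW,*}.s\right) = \sum_{j=1}^N \left\langle \Phi_t^{\wW,*}.s,\, e_j \right\rangle_{\Gamma^2(L)}\, e_j,
\]
so that, by orthonormality of the $e_j$,
\[
\left\| P\left(\Phi_t^{\wW,*}.s\right) \right\|_{\Gamma^2(L)}^2 = \sum_{j=1}^N \left| \left\langle \Phi_t^{\wW,*}.s,\, e_j \right\rangle_{\Gamma^2(L)} \right|^2.
\]

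Next I would apply Corollary~\ref{cor:decay} to each of the finitely many pairs $(s, e_j)$, obtaining $\left\langle \Phi_t^{\wW,*}.s,\, e_j \right\rangle_{\Gamma^2(L)} \to 0$ as $t \to \infty$ for every $j \in \{1,\dots,N\}$. Since the displayed sum has a fixed finite number of terms, each tending to $0$, the norm $\left\| P\left(\Phi_t^{\wW,*}.s\right) \right\|_{\Gamma^2(L)}$ tends to $0$, which is exactly the claimed convergence in $\Gamma^2(L)$.

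There is no genuine obstacle here: the single point that matters is the finite-dimensionality of $V$, since it is precisely what lets one pass the limit through the sum termwise. (For an infinite-dimensional target subspace one would instead need uniform control in $j$, available from the unitarity of $\Phi_t^{\wW,*}$ together with an $\varepsilon$-approximation by finite-rank projections, but this refinement is not needed for the present statement.)
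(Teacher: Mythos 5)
Your argument is correct and is precisely the intended one: the paper states Corollary~\ref{cor:proj} as a direct extension of Corollary~\ref{cor:decay} without writing out details, and the implicit proof is exactly your expansion of $P$ in a finite orthonormal basis followed by termwise application of Corollary~\ref{cor:decay}. Nothing to add.
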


Now let $A$ be an unbounded self-adjoint operator on $\Gamma^2(L)$ with domain $\mathrm{Dom}(A)$ and denote convergence of operators on $\Gamma^2(L)$ in the strong topology by $s - \lim$. By the Spectral Theorem (see, for example, \cite[Theorem 7.20]{Hall}), $A$ corresponds to a unique spectral family $\{P_\lambda\}_{\lambda \in \mathbb{R}}$. More concretely, each $P_\lambda$ is an orthogonal projection to a closed subspace of $\Gamma^2(L)$ and the family satisfies:
\begin{enumerate}
    \item Non-decreasing, i.e., $P_\lambda P_{\lambda'} = P_{\min\{\lambda,\lambda'\}}$ for all $\lambda,\lambda' \in \mathbb{R}$.
    \item Strong right continuity, i.e., $P_\lambda = s - \lim_{\epsilon \to 0^+} P_{\lambda+\epsilon}$ for all $\lambda \in \mathbb{R}$.
    \item $s - \lim_{\lambda \to -\infty} P_\lambda =\mathbf{0}$ and $s - \lim_{\lambda \to +\infty} P_\lambda =\mathbf{1}$.
\end{enumerate}
In this context, for every $s \in \Gamma^2(L)$ one considers the function
\begin{align*}
F_s \colon \mathbb{R} &\to \mathbb{R} \\
\lambda &\mapsto \langle P_\lambda s, s\rangle_{\Gamma^2(L)}
\end{align*}
and the corresponding Stieltjes measure $m_s$ on $\mathbb{R}$, i.e., such that
\[
m_s((a,b]) := F_s(b) - F_s(a) \quad \text{for all } a <b \in \mathbb{R}.
\]
Then, $\mathrm{Dom}(A)$ can be described as
\[
\mathrm{Dom}(A) := \left\lbrace s \in \Gamma^2(L) \colon \int_\mathbb{R} \lambda^2 \thinspace dm_s(\lambda) < \infty\right\rbrace
\]
and $A$ satisfies
\begin{equation}
\label{eq:avg}
\langle As,s\rangle_{\Gamma^2(L)} = \int_\mathbb{R} \lambda \thinspace dm_s(\lambda) \quad \text{for every } s \in \Gamma^2(L).
\end{equation}
If we consider $A$ as a quantum observable of our system, then the measure $m_s(V) / \|s\|$ on $\mathbb{R}$ describes the probability distribution of the observable represented by $A$, and, in particular, the expected value of this quantity can be computed using the formula in \eqref{eq:avg}.

As usual, we say that $\lambda \in \mathbb{R}$ is in the point spectrum of $A$ if it admits an eigenfunction, i.e., if there exists $0 \neq s \in \mathrm{Dom}(A)$ such that $As = \lambda s$. In terms of the corresponding spectral family, this is equivalent to the condition that $\mathrm{Ran}(P(\{\lambda\})) \neq \{0\}$, where 
\[
P(\{\lambda\}) := P_\lambda - \left(s -\lim_{\epsilon\to 0^+} P_{\lambda-\epsilon}\right).
\]
Furthermore,
\[
s \in \mathrm{Ran}(P(\{\lambda\})) \Longleftrightarrow s \in \mathrm{Dom}(A) \text{ and } As = \lambda s.
\]
Given any $s \in \Gamma^2(L)$ the Stieltjes measure $m_s$ has a (potentially trivial) atom at $\lambda$ of size
\[
m_s(\{\lambda\}) = \|P(\{\lambda\}) s\|.
\]

From this description and Corollary \ref{cor:proj} we deduce that, if $\lambda \in \mathbb{R}$ is not an eigenvalue of $A$ or is an eigenvalue with finite dimensional eigenspace, then the potential atom at $\lambda$ of the Stieltjes measure of an arbitrary wave function dissipates under time evolution:

\begin{corollary}
    \label{cor:spec}
    Let $(M,\omega)$ be a translation surface, $W := aX+bY$ with $(a,b) \not = (0,0)$ be a vector field on $M$, and $L \to M$ be a Heisenberg line bundle. Suppose that $\lambda \in \mathbb{R}$ is such that $\mathrm{Ran}(P(\{\lambda\})) \subseteq \Gamma^2(L)$ is finite dimensional. Then, for every $s \in \Gamma^2(L)$,
    \[
    \lim_{t \to \infty} m_{\Phi_t^{\wW,*}.s}(\{\lambda\}) = 0.
    \]
\end{corollary}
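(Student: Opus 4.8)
The plan is to reduce the statement to Corollary~\ref{cor:proj} applied to the specific finite-rank orthogonal projection $P := P(\{\lambda\})$. Recall that by definition $m_s(\{\lambda\}) = \|P(\{\lambda\})\,s\|$, so the quantity we must control is $\|P(\{\lambda\})\,(\Phi_t^{\wW,*}.s)\|$. Since $\mathrm{Ran}(P(\{\lambda\}))$ is assumed finite dimensional by hypothesis, the operator $P(\{\lambda\})$ is exactly an orthogonal projection onto a finite dimensional subspace of $\Gamma^2(L)$, which is precisely the setting of Corollary~\ref{cor:proj}. First I would invoke that corollary with this choice of $P$ to obtain
\[
\lim_{t \to \infty} P(\{\lambda\})\left(\Phi_t^{\wW,*}.s\right) = 0 \quad \text{in } \Gamma^2(L),
\]
where the convergence is in the Hilbert space norm.

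Next I would pass from norm convergence of the projected vectors to convergence of the atom sizes. Since $m_{\Phi_t^{\wW,*}.s}(\{\lambda\}) = \|P(\{\lambda\})(\Phi_t^{\wW,*}.s)\|$ and $s \mapsto \|P(\{\lambda\})s\|$ is continuous (indeed $1$-Lipschitz) in the $\Gamma^2(L)$ norm, the conclusion $\lim_{t\to\infty} m_{\Phi_t^{\wW,*}.s}(\{\lambda\}) = 0$ is immediate from the previous display. This completes the argument.

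The only genuine content therefore already lives in Corollary~\ref{cor:proj}, whose proof in turn rests on Corollary~\ref{cor:decay} (decay of correlations for wave functions, itself a consequence of Theorems~\ref{thm:rel_mix_new} and~\ref{theo:iso}): one writes the finite dimensional subspace as the span of finitely many orthonormal vectors $e_1,\dots,e_N$, so that $\|P(\Phi_t^{\wW,*}.s)\|^2 = \sum_{j=1}^N |\langle \Phi_t^{\wW,*}.s, e_j\rangle_{\Gamma^2(L)}|^2$, and applies Corollary~\ref{cor:decay} to each of the finitely many inner products. The main obstacle — such as it is — is purely bookkeeping: one must be careful that $P(\{\lambda\})$ really is an \emph{orthogonal} projection onto a genuinely finite dimensional space (guaranteed by the hypothesis $\dim \mathrm{Ran}(P(\{\lambda\})) < \infty$ together with the Spectral Theorem), and that the identification $m_s(\{\lambda\}) = \|P(\{\lambda\})s\|$ is the one already recorded in the excerpt. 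No new analytic input is required beyond what has already been established.
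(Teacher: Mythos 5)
Your argument is correct and coincides with the paper's (implicit) proof: Corollary~\ref{cor:spec} is obtained exactly by applying Corollary~\ref{cor:proj} to the finite-rank orthogonal projection $P(\{\lambda\})$ and then using the recorded identity $m_{\Phi_t^{\wW,*}.s}(\{\lambda\}) = \|P(\{\lambda\})(\Phi_t^{\wW,*}.s)\|$ together with continuity of the norm. Your sketch of how Corollary~\ref{cor:proj} itself reduces to Corollary~\ref{cor:decay} via an orthonormal basis of the finite dimensional range is also the intended route, so no further comment is needed.
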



\section{Affine skew products}\label{sec:affineskewiet}

We now explain how certain families of affine skew products over interval exchange transformations arise as first return maps of Heisenberg translation flows to appropriately chosen cross sections; we explicitly describe the set of skew products that arise in this way. As an application of the results derived in previous sections we describe ergodic properties of such skew products, proving Theorem~\ref{theorem:admissible} and Corollary~\ref{cor:admissible}.

\subsection{Interval Exchange Transformations.} An interval exchange transformation (IET) is a bijective piecewise isometry $T \colon I \to I$ of an interval $I \subseteq \mathbb{R}$; without loss of generality we assume the left endpoint of $I$ is $0$. Such map is specified by the following data:
\begin{itemize}
    \item A partition of $I$ into subintervals $\{I_\alpha\}_{\alpha \in A}$ with $A$ a finite alphabet;
    \item A pair of bijections $\pi_i \colon A \to \{1,\dots,d\}$ with $i \in \{0,1\}$ specifying the order of the subintervals before and after the map $T$ is applied;
    \item A positive length vector $\lambda := (\lambda_\alpha)_{\alpha \in A} \in \mathbb{R}_+^A$ with $\sum_{\alpha \in A} \lambda_\alpha = |I|$ specifying the length of each of the subintervals.
\end{itemize}
Given the pair $\pi := (\pi_0,\pi_1)$, denote by $\Omega_\pi \colon \mathbb{R}^A \to \R^A$ the linear map defined by
\[
(\Omega_\pi)_{\alpha,\beta} := \left\lbrace \begin{array}{l l}
     +1& \text{if $\pi_1(\alpha) > \pi_1(\beta)$ and $\pi_0(\alpha)< \pi_0(\beta)$;} \\
     -1& \text{if $\pi_1(\alpha) < \pi_1(\beta)$ and $\pi_0(\alpha)> \pi_0(\beta)$;} \\
     0 & \text{otherwise.}
\end{array} \right.
\]
Define $w := (w_\alpha)_{\alpha \in A} \in \R^A$ by
\[
w_\alpha := -\sum_{\pi_0(\beta) < \pi_0(\alpha)} \tau_\beta + \sum_{\pi_1(\beta) < \pi_1(\alpha)} \tau_\beta = \Omega_\pi(\lambda)_\alpha.
\]
Then, the corresponding IET $T \colon I \to I$ is given by
\[
T(x) = x + w_\alpha, \quad \text{if } x \in I_\alpha.
\]
The monodromy invariant of $T$ is the permutation $p \colon \{1, \dots, d \} \to \{1,\dots,d\}$ given by
\[
p = \pi_1 \circ \pi_0^{-1}.
\]
We say $\pi$ is irreducible if $p$ does not decompose into disjoint permutations.

\subsection{Zippered Rectangles.} Consider an interval exchange $T \colon I \to I$ described by the data specified above. To simplify the discussion we will assume $\pi$ is irreducible. Denote by $T_\pi^+$ the convex cone of vectors $\tau := (\tau_\alpha)_{\alpha \in A} \in \R^A$ such that
\[
\sum_{\pi_0(\alpha) \leq k} \tau_\alpha > 0 \quad \text{and} \quad \sum_{\pi_1(\alpha) \leq k} \tau_\alpha < 0 \quad \text{for all } 1 \leq k \leq d-1.
\]
Given $\tau \in T_\pi^+$, define $h := (h_\alpha)_{\alpha \in A} \in \R^A$ by
\[
h_\alpha := \sum_{\pi_0(\beta) < \pi_0(\alpha)} \tau_\beta - \sum_{\pi_1(\beta) < \pi_1(\alpha)} \tau_\beta = -\Omega_\pi(\tau)_\alpha.
\]
Notice that, as $\tau \in T_\pi^+$, $h_\alpha > 0$ for all $\alpha \in A$. Denote $H_\pi^+ := -\Omega_\pi(T_\pi^+).$

Suppose $\tau \in T_\pi^+$. For each $\alpha \in A$ consider the rectangles of $\mathbb{R}^2$ defined by
\begin{align*}
    R_\alpha^0 &:=\left( \sum_{\pi_0(\beta) < \pi_0(\alpha) }\lambda_\beta, \sum_{\pi_0(\beta) \leq \pi_0(\alpha) }\lambda_\beta \right) \times [0,h_\alpha],\\
    R_\alpha^1 &:=\left( \sum_{\pi_1(\beta) < \pi_1(\alpha) }\lambda_\beta, \sum_{\pi_1(\beta) \leq \pi_1(\alpha) }\lambda_\beta \right) \times [-h_\alpha,0].
\end{align*}
Consider also the vertical segments of $\mathbb{R}^2$ given by
\begin{align*}
    S_\alpha^0 &:= \left\lbrace \sum_{\pi_0(\beta) \leq \pi_0(\alpha)} \lambda_\beta\right\rbrace \times \left[0, \sum_{\pi_0(\beta) \leq \pi_0(\alpha)} \tau_\beta \right],\\
    S_\alpha^0 &:= \left\lbrace \sum_{\pi_1(\beta) \leq \pi_1(\alpha)} \lambda_\beta\right\rbrace \times \left[\sum_{\pi_1(\beta) \leq \pi_1(\alpha)} \tau_\beta, 0 \right].
\end{align*}
Here, by convention, if $\alpha(0) := \pi_0^{-1}(d) \in \mathcal{A}$ and $\alpha(1) := \pi_1^{-1}(d) \in \mathcal{A}$, then
\[
S_{\alpha(0)}^0 = S_{\alpha(1)}^1 := \left\lbrace \sum_{\beta \in \mathcal{A}} \lambda_\beta\right\rbrace \times \left[0, \sum_{\beta \in \mathcal{A}} \tau_\beta \right];
\]
this segment can either be above of below the horizontal axis.

Consider the translation surface $M = M(\pi,\lambda,\tau,h)$ obtained as the quotient of
\[
\Xi :=\bigcup_{\alpha \in A} \bigcup_{\epsilon \in \{0,1\}} R_\alpha^\epsilon \cup S_{\alpha}^\epsilon
\]
by the equivalence relation $\sim$ we now describe. First, identify each rectangle $R_\alpha^0$ with its counterpart $R_\alpha^1$ through the translation
\[
(x,y) \mapsto (x + w_\alpha, z-h_\alpha).
\]
Suppose now that the sum of all the $\tau_\beta$ is positive. Then, for $\alpha = \alpha(1)$ consider
\[
\widetilde{S} := \left\lbrace \sum_{\pi_0(\beta) \leq \pi_0(\alpha)} \lambda_\beta \right\rbrace \times  \left[h_{\alpha}, \sum_{\pi_0(\beta) \leq \pi_0(\alpha)} \tau_\beta \right],
\]
i.e., $\widetilde{S}$ is a segment at the top of $S_\alpha^0$. Analogously, if the sum of all the $\tau_\beta$ is negative, for $\alpha = \alpha(0)$ consider instead the segment at the bottom of $S_\alpha^1$ given by
\[
\widetilde{S} := \left\lbrace \sum_{\pi_1(\beta) \leq \pi_1(\alpha)} \lambda_\beta \right\rbrace \times  \left[\sum_{\pi_1(\beta) \leq \pi_1(\alpha)} \tau_\beta ,-h_{\alpha}\right].
\]
Then identify $\widetilde{S}$ with $S_{\alpha(0)}^0 = S_{\alpha(1)}^1$ by translation. This completes the definition of $\sim$. See Figure \ref{fig:zip} for an example of this construction.

\begin{figure}[ht]
\centering
\includegraphics[scale=1.5]{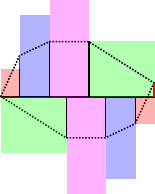}
\caption{Example of the zippered rectangle construction.}
\label{fig:zip}
\end{figure}

\label{sec:skew}

\subsection{Affine skew products.} A piecewise affine skew product over an interval exchange transformation $T \colon I \to I$ as above is specified by the following data:
\begin{itemize}
    \item A positive vector $h := (h_\alpha)_{\alpha \in A} \in \mathbb{R}_+^\mathcal{A}$ encoding the derivative of the skew product transformation above each subinterval;
    \item A vector $b := (b_\alpha)_{\alpha \in A} \in (\mathbb{R}/\mathbb{Z})^\mathcal{A}$ specifying the translational part of the skew product transformation above each subinterval.
\end{itemize}
More specifically, if $\partial I_\alpha \in I$ denotes the left endpoint of the interval $I_\alpha$ for $\alpha \in A$, then the corresponding skew product $\smash{\widehat{T}} \colon I \times \mathbb{R}/\mathbb{Z} \to \mathbb \colon I \times \mathbb{R}/\mathbb{Z}$ is given by
\begin{equation}
\label{eq:skew}
\smash{\widehat{T}}(x,\rho) = (x + w_\alpha, \thinspace \rho+h_\alpha(x-\partial I_\alpha) + b_\alpha), \quad \text{if } x \in I_\alpha.
\end{equation}

Let $(M,\omega)$ be a translation surface with vertical vector field $X$ and $C \to M$ be a Heisenberg circle bundle. Recall that 
\[
\Phi^{\wX} := \left\lbrace{\Phi_t^{\wX}} \colon C \to C \right\rbrace_{t \in \mathbb{R}}
\]
denotes the flow induced by parallel transport along the orbits of $X$. In this context we have the following characterization of first return maps:

\begin{proposition}
    \label{prop:skew1}
    Let $(M,\omega)$ be a translation surface with vertical vector field $X$ and $C \to M$ be a Heisenberg circle bundle. Consider a union of disjoint non-vertical straight line segments $I \subseteq M$ and denote by $P \subseteq E$ to be the set of fibers above points in $I$. Identify $P$ with $I \times \mathbb{R}/\mathbb{Z}$ by parallel transport. Then, the first return map of the flow $\smash{\Phi^{\wX}}$ on $E$ to $P$ is an affine skew product over an interval exchange transformation.
\end{proposition}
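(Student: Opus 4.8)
The plan is to combine the classical description of the first return map of $\phi^X$ to a transversal with a holonomy computation based on the curvature relation \eqref{eq:curv}. Recall that the first return map of $\phi^X$ to the non-vertical transversal $I$ is an interval exchange transformation $T\colon I\to I$, with continuity intervals $I_\alpha$ on each of which $T$ is a restricted translation $x\mapsto x+w_\alpha$ and on each of which the first return time $h_\alpha>0$ is constant. Since $\Phi^{\wX}$ is defined by parallel transport, it intertwines the $U(1)$-action on the fibers of $C$; equivalently, by Proposition \ref{prop:bracket}, $\Phi^{\wX}$ commutes with $\Phi^{\wR}$. Hence its first return map $\wT$ to $P$ commutes with the fiber rotations, and, after trivializing $P\cong I\times\R/\Z$ by parallel transport along $I$ (performed on each connected component of $I$), the map $\wT$ necessarily takes the skew form $\wT(x,\rho)=(T(x),\rho+c(x))$ for some function $c\colon I\to\R/\Z$; concretely, $e^{2\pi i c(x)}$ is the holonomy, read in this trivialization, of parallel transport along the returning flow arc $\gamma_x$ from $x$ to $T(x)$. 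It then suffices to show that $c$ is affine on each $I_\alpha$ with slope proportional to $h_\alpha$.

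To that end I would fix $x,x'$ in the same continuity interval $I_\alpha$ and form the closed loop $\ell$ obtained by concatenating the flow arc $\gamma_{x'}$, the sub-arc of $I$ from $T(x')$ to $T(x)$ (which lies in $I$ since $T(I_\alpha)\subseteq I$), the reversed flow arc $\gamma_x^{-1}$, and the sub-arc of $I$ from $x$ to $x'$. Because $x,x'$ lie in a single continuity interval, the open arcs $\gamma_{x''}$, $x''\in[x,x']$, avoid the zeroes of $\omega$, are pairwise disjoint, and meet $I$ only at their endpoints (using that $h_\alpha$ is the \emph{first} return time to $I$); hence $\ell$ bounds the embedded disk $D$ swept by the sub-segment $[x,x']\subseteq I$ under $\phi^X$ over the time interval $[0,h_\alpha]$. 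Now I would compute $\mathrm{hol}(\ell)$ in two ways. Read in the chosen trivialization, parallel transport along the two $I$-arcs is the identity, while transport along $\gamma_{x'}$ and along $\gamma_x^{-1}$ contributes $e^{2\pi i c(x')}$ and $e^{-2\pi i c(x)}$, so $\mathrm{hol}(\ell)=e^{2\pi i(c(x')-c(x))}$. On the other hand, the curvature relation \eqref{eq:curv}, applied to the curvature $\hbar^{-1}\eta$ of $L$, gives $\mathrm{hol}(\ell)=\exp(i\hbar^{-1}\int_D\eta)$.

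It remains to evaluate $\int_D\eta$. In flat coordinates adapted to $\omega$, where $\eta=dx\wedge dy$ and $\phi^X$ is translation in a fixed direction, $D$ develops to the parallelogram spanned by the displacement $x'-x$ along $I$ and the flow displacement vector of length $h_\alpha$; equivalently $D$ is foliated by flow arcs of constant length $h_\alpha$ over $[x,x']$, so that (by Fubini, this is exactly Veech's zippered-rectangle area count) $\int_D\eta$ is, up to sign, $h_\alpha$ times the displacement between $x$ and $x'$ in the direction transverse to the flow, a quantity that is an affine function of the arc-length parameter on $I_\alpha$. Therefore $c(x')-c(x)$ is proportional to $h_\alpha\,(x'-x)\pmod 1$, with proportionality constant depending only on the (constant) slope of $I$ along $I_\alpha$; that is, $c(x)=h'_\alpha\,(x-\partial I_\alpha)+b_\alpha$ on $I_\alpha$ with $h'_\alpha$ proportional to $h_\alpha$ and $b_\alpha\in\R/\Z$ constant. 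Substituting into $\wT(x,\rho)=(T(x),\rho+c(x))$ yields precisely the normal form \eqref{eq:skew} (up to the evident reparametrization of $I$), so $\wT$ is a piecewise affine skew product over $T$.

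The step I expect to be most delicate is the geometric bookkeeping in the middle paragraph: verifying that $\ell$ genuinely bounds an embedded disk even though the returning flow arcs may traverse several of the zippered rectangles making up $M$, fixing the orientation of $\ell$ so that the sign in the curvature relation is correct, and checking that the parallel-transport trivialization of $P$ is chosen coherently over the possibly disconnected transversal $I$ and over its image $T(I)$. Once the identity $\int_D\eta=\pm\,h_\alpha\cdot(\text{transverse displacement})$ is in place, the affinity of $c$ on each $I_\alpha$ and the identification with \eqref{eq:skew} are immediate.
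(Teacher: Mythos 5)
Your proposal is correct and follows essentially the same route as the paper: reduce to the interval exchange $T$ with constant return times $h_\alpha$ on continuity intervals, trivialize $P$ by parallel transport so the twist is the holonomy of the returning flow arc, and then use the curvature relation \eqref{eq:curv} applied to the flow box (the zippered rectangle $R_\alpha$, which is exactly your embedded disk $D$) to see that the twist is affine in $x$ with slope $h_\alpha$. Your version spells out the two-sided holonomy computation that the paper compresses into Figure \ref{fig:zip2}, and your flagged delicate step is handled by the fact that $h_\alpha$ is the \emph{first} return time, so the swept region is an embedded flow box.
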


\begin{proof}
    Without loss of generality we can assume $I$ is a single horizontal segment. It is a well known fact that the first return map of the vertical flow on $M$ to $I$ is an interval exchange transformation $T \colon I \to I$. Furthermore, $M = M(\pi,\lambda,\tau,h)$ can be obtained via the zippered rectangle construction. Consider the notation introduced above. Given $\alpha \in A$ let $c_\alpha := h_\alpha > 0$ and $b_\alpha \in \mathbb{R}/\mathbb{Z}$ be the second coordinate of the result of parallel transporting $(\partial I_\alpha,0) \in I \times \mathbb{R}/\mathbb{Z} = P$ along the left side of the corresponding rectangle until its first return to $P$. Then, the curvature formula \eqref{eq:curv} guarantees the first return map under consideration is given precisely by the formula in \eqref{eq:skew}; see Figure \ref{fig:zip2}.
\end{proof}

\begin{figure}[ht]
\centering
\includegraphics[scale=1.5]{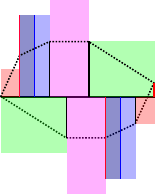}
\caption{The proof of Proposition \ref{prop:skew1}. The shaded rectangle contributes to the twist of the second coordinate of the skew product transformation via the curvature formula \eqref{eq:curv}.}
\label{fig:zip2}
\end{figure}

\begin{remark}
Below, in Theorem \ref{theo:skew2}, we completely characterize the set of affine skew products over an interval exchange transformation that can arise via Proposition \ref{prop:skew1}.
\end{remark}

\subsection{Homological Interpretation.} Consider an interval exchange $T \colon I \to I$ described by the data specified above. Consider the permutation $\sigma \colon \{0,\dots,d\} \to \{0,\dots,d\}$ given by
\[
\sigma(j)  := \left\lbrace \begin{array}{l l}
    p^{-1}(1)-1 & \text{if } j =0; \\
    d & \text{if } j = p^{-1}(d);\\
    p^{-1}(p(j)+1)-1 & \text{otherwise.}
\end{array} \right.
\]
The orbits of $\sigma$ encode the singularities of the suspended surface. Furthermore, for each orbit $\mathcal{O}$ of $\sigma$ not containing $0$ consider the vector $\lambda(\mathcal{O}) \in \R^A$ given by
\[
\lambda(\mathcal{O})_\alpha := \chi_\mathcal{O}(\pi_0(\alpha)) - \chi_\mathcal{O}(\pi_0(\alpha)-1)= \left\lbrace \begin{array}{ll}
    +1 & \text{if $\pi_0(j) \in \mathcal{O}$ but $\pi_0(j-1) \notin \mathcal{O}$}; \\
    -1 & \text{if $\pi_0(j) \notin \mathcal{O}$ but $\pi_0(j-1) \in \mathcal{O}$}; \\
    0 & \text{otherwise}.
\end{array} \right.
\]
Then, a basis of $\mathrm{Ker} \thinspace \Omega_\pi \subseteq \R^A$ is given by
\[
\{\lambda(\mathcal{O}) \colon \mathcal{O} \text{ is an orbit of $\sigma$ not containing $0$}\}.
\]

Consider a suspension surface $M = M(\pi,\lambda,\tau,h)$ obtained as above. Identify the horizontal segment $\gamma := I \times \{0\} \subseteq M$ with the interval $I$. For every $\alpha \in A$ consider the closed curve $v_\alpha$ obtained by following the vertical segment along the left side of $R_\alpha^0 \sim R_\alpha^1$ and then closing up along $\sigma$. Then, the following holds; see \cite{VianaIET} for a proof.

\begin{lemma}
    \label{lemma:hom}
    In the context above, the linear map
    \[
    \begin{array}{cccc}
        F \colon & \R^A &\to & H_1(M;\mathbb{R}) \\
        & c:=(c_\alpha)_{\alpha \in A} &  \mapsto &\sum_{\alpha \in A} c_\alpha[v_\alpha]
    \end{array}
    \]
    is surjective with kernel equal to $\mathrm{Ker} \thinspace \Omega_\pi \subseteq \R^A$.
\end{lemma}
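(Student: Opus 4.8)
The statement is classical, going back to Veech and Masur, and I would follow the treatment in \cite{VianaIET}. The plan is to isolate two geometric inputs about the curves $v_\alpha$ in the zippered rectangle model $M = M(\pi,\lambda,\tau,h)$ — that their homology classes generate $H_1(M;\R)$, and that their pairwise algebraic intersection numbers are computed by $\Omega_\pi$ — and then to deduce the lemma by a short argument in symplectic linear algebra. Indeed, granting these two inputs, surjectivity of $F$ is immediate from generation, and for the kernel one uses that the intersection form $\langle\,\cdot\,,\,\cdot\,\rangle$ on $H_1(M;\R)$ is nondegenerate: for $c=(c_\alpha)_{\alpha\in A}\in\R^A$ one has $F(c)=0$ if and only if $\langle F(c),[v_\beta]\rangle=0$ for every $\beta\in A$ (since the $[v_\beta]$ span $H_1(M;\R)$), which by the intersection-number formula is the linear system $\sum_{\alpha\in A}c_\alpha(\Omega_\pi)_{\alpha\beta}=0$ for all $\beta\in A$, i.e.\ — using antisymmetry of $\Omega_\pi$ — exactly the condition $c\in\mathrm{Ker}\,\Omega_\pi$.

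To prove generation I would use the transversal $\gamma = I\times\{0\}$ together with the downward vertical separatrices issuing from the singularities: removing their union from $M$ leaves a disjoint union of open flow boxes, one over each subinterval $I_\alpha$, so that cutting $M$ along $\bigcup_{\alpha}v_\alpha$ (rel $\gamma$) yields a disjoint union of disks. Hence every class in $H_1(M;\R)$ is represented by a cycle carried by the one-complex $\gamma\cup\{\text{vertical separatrices}\}$, and the boundary relations of the rectangles $R_\alpha$ let one rewrite the segments of $\gamma$ in terms of the vertical arcs; what remains are exactly linear combinations of the $[v_\alpha]$. The bookkeeping here — matching the endpoints of the vertical sides of the $R_\alpha$ and the way the closing arcs thread through the singularities according to the permutation $\sigma$ — is the one genuinely technical point, and is carried out in detail in \cite{VianaIET}.

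For the intersection numbers I would compute directly in the model: the vertical side of $v_\alpha$ meets $\gamma$ at the position recorded by $\pi_0(\alpha)$ near the bottom and by $\pi_1(\alpha)$ near the top, while the closing arc of $v_\beta$ runs just off $\gamma$ between the $\pi_1(\beta)$-position and the $\pi_0(\beta)$-position; counting the transverse crossings with signs shows that $v_\alpha$ and $v_\beta$ meet algebraically once precisely when $\pi_0(\alpha)-\pi_0(\beta)$ and $\pi_1(\alpha)-\pi_1(\beta)$ have opposite signs, with the sign dictated by which way, and not at all otherwise, so that $[v_\alpha]\cdot[v_\beta]=(\Omega_\pi)_{\alpha\beta}$. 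Combining with the previous paragraph gives $\mathrm{Ker}\,F=\mathrm{Ker}\,\Omega_\pi$ together with surjectivity of $F$; as a consistency check, comparing dimensions forces $\mathrm{rank}\,\Omega_\pi=\dim H_1(M;\R)=2g$, and shows that the cycles $\sum_{\alpha}\lambda(\mathcal{O})_\alpha v_\alpha$, which bound neighbourhoods of the singularities and are therefore null-homologous, account for all the relations, i.e.\ that $\{\lambda(\mathcal{O})\}$ is a basis of $\mathrm{Ker}\,\Omega_\pi$, matching the description preceding the lemma. I expect the main obstacle to be the first input, generation, precisely because the curves $v_\alpha$ are not disjoint and pass through the singularities, so some care with a CW or cut-surface model is needed to make the reduction rigorous.
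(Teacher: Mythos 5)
Your proposal is correct and follows essentially the same route as the paper, which gives no proof of Lemma~\ref{lemma:hom} of its own but defers to \cite{VianaIET}: the two geometric inputs you isolate (the classes $[v_\alpha]$ generate $H_1(M;\R)$ because $M$ is built from the one-complex $\gamma\cup\{\text{vertical sides}\}$ by attaching the rectangles as $2$-cells, and $[v_\alpha]\cdot[v_\beta]=(\Omega_\pi)_{\alpha\beta}$), combined with nondegeneracy of the intersection form and antisymmetry of $\Omega_\pi$, are precisely the argument of the cited reference. Your consistency check ($\mathrm{rank}\,\Omega_\pi=2g$, with $\mathrm{Ker}\,\Omega_\pi$ spanned by the null-homologous cycles $\sum_\alpha\lambda(\mathcal{O})_\alpha v_\alpha$ around the singularities) also matches the description preceding the lemma and the chain-level identity used later in the proof of Theorem~\ref{theo:skew2}.
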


\begin{remark}
Rather than using Lemma \ref{lemma:hom} directly, we will follow the arguments in its proof together with Proposition \ref{prop:skew1} to derive a proof of Theorem \ref{theo:skew2} below.
\end{remark}

\subsection{Admissible skew products.}\label{sec:admissible} We now introduce the explicit family of affine skew products over interval exchange transformations that can arise via Proposition \ref{prop:skew1}.

\begin{definition}
    Let $T \colon I \to I$ be an interval exchange transformation described by the data specified above. Denote by $\mathcal{A}_T$ the set of affine skew products over $T$ specified as in \eqref{eq:skew} by vectors $h := (h_\alpha)_{\alpha \in A} \in \mathbb{R}_+^\mathcal{A}$ and $b := (b_\alpha)_{\alpha \in A} \in (\mathbb{R}/\mathbb{Z})^\mathcal{A}$ satisfying:
\begin{itemize}
    \item $h \in H_\pi^+ :=H_T$;
    \item $\sum_{\alpha \in A} \lambda_\alpha h_\alpha \in 2\pi\mathbb{Z}$;
    \item For all orbits $\mathcal{O}$ of $\sigma$ not containing $0$,
    \[
    \sum_{k \in \mathcal{O}} b_{\pi_0^{-1}(k)} - b_{\pi_0^{-1}(k+1)} = -\sum_{k \in \mathcal{O}} \lambda_{\pi_0^{-1}(k)} h_{\pi_0^{-1}(k)} \quad \mathrm{mod} \ \mathbb{Z},
    \]
    where by definition we let $\smash{b_{\pi_0^{-1}(d+1)} := 0}$.
\end{itemize}
We refer to $\mathcal{A}_T$ as the set of \textit{admissible affine skew products} over $T$.
\end{definition}

\begin{remark}
    Notice that if $\mathcal{O}$ has a single orbit, which necessarily contains $0$, then the third bullet point in the definition above imposes no additional constraints. This condition is equivalent to any suspension surface $M = M(\pi,\lambda,\tau,h)$ having a single singularity.
\end{remark}

In this context we prove the following characterization:

\begin{theorem}
    \label{theo:skew2}
    Let $T \colon I \to I$ be an interval exchange transformation. Then $\mathcal{A}_T$ is precisely the set of skew products over $T$ that arise via Proposition \ref{prop:skew1}.
\end{theorem}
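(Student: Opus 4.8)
The plan is to establish the two inclusions separately, working throughout with the zippered rectangle picture, the curvature relation \eqref{eq:curv}, and the homological bookkeeping underlying Lemma~\ref{lemma:hom} (following the arguments in its proof, as announced in the Remark after it). First I would treat the inclusion ``arises via Proposition~\ref{prop:skew1}'' $\Rightarrow$ ``lies in $\mathcal{A}_T$''. Suppose $\widehat T$ with data $(h,b)$ is the first return map to $P$ of $\Phi^{\wX}$ on a Heisenberg circle bundle $C\to M$. By the zippered rectangle construction $M=M(\pi,\lambda,\tau,h)$ for some $\tau\in T_\pi^+$, so $h=-\Omega_\pi(\tau)\in H_\pi^+=H_T$, which is the first bullet point in the definition of $\mathcal{A}_T$. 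Since $C$ comes from a Heisenberg line bundle over $M$, Theorem~\ref{theo:prequant} forces Weil's integrality condition \eqref{eq:weil}, which here reads $\mathrm{Area}(M,\omega)=\sum_{\alpha}\lambda_\alpha h_\alpha\in 2\pi\mathbb{Z}$; this is the second bullet point. For the third bullet point I would recall from the proof of Proposition~\ref{prop:skew1} that $b_\alpha$ is the fiber twist accumulated by parallel transport up the left vertical side of $R^0_\alpha$ until first return to $P$, i.e.\ (once $P$ is parallelized, so that the part of the loop running along $I\subseteq P$ is twist-free) the holonomy of the closed curve $v_\alpha$ of Lemma~\ref{lemma:hom}. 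For an orbit $\mathcal{O}$ of $\sigma$ not containing $0$, the integer cycle $\sum_\alpha \lambda(\mathcal{O})_\alpha v_\alpha$ lies in $\ker\Omega_\pi=\ker F$, hence bounds a $2$-chain $D_{\mathcal{O}}$ in $M$ which, after unwinding the zippering, is a signed union of the rectangles $R^0_{\pi_0^{-1}(k)}$, $k\in\mathcal{O}$, of total signed area $-\sum_{k\in\mathcal{O}}\lambda_{\pi_0^{-1}(k)}h_{\pi_0^{-1}(k)}$. Feeding $\partial D_{\mathcal{O}}$ into \eqref{eq:curv} and using the telescoping identity
\[
\sum_{\alpha}\lambda(\mathcal{O})_\alpha\, b_\alpha=\sum_{k\in\mathcal{O}}\bigl(b_{\pi_0^{-1}(k)}-b_{\pi_0^{-1}(k+1)}\bigr)\qquad(\text{with }b_{\pi_0^{-1}(d+1)}:=0)
\]
then yields exactly the third bullet point, so $(h,b)\in\mathcal{A}_T$.

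For the converse, I would fix $(h,b)\in\mathcal{A}_T$, pick any $\tau\in T_\pi^+$ with $-\Omega_\pi(\tau)=h$ (possible since $h\in H_\pi^+$), and set $M=M(\pi,\lambda,\tau,h)$. The second bullet point says Weil's condition \eqref{eq:weil} holds for $M$, so by Theorem~\ref{theo:prequant} at least one Heisenberg line bundle $L_0\to M$ exists; parallelizing its circle bundle over $P$ gives a first return map with data $(h,b^{(0)})$ where, by the first half of the proof, $b^{(0)}$ already satisfies the third bullet point. It then remains to realize every $b$ satisfying the third bullet point as $b^{(0)}+\delta$ for a suitable other Heisenberg bundle. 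By Theorem~\ref{theo:prequant} every such bundle is $L_0\otimes F$ with $F$ flat, and by Theorem~\ref{theo:flat} these are classified by holonomy characters $\chi\in\mathrm{Hom}(H_1(M;\mathbb{Z}),\mathbb{R}/\mathbb{Z})$; tensoring with $F$ shifts the holonomy of $v_\alpha$, hence $b_\alpha$, by $\chi([v_\alpha])$. By (the proof of) Lemma~\ref{lemma:hom}, $[v_\alpha]=F(e_\alpha)$ for the standard basis vector $e_\alpha$, and the induced surjection $\mathbb{Z}^A\to H_1(M;\mathbb{Z})$ has kernel the $\mathbb{Z}$-span of the integer vectors $\lambda(\mathcal{O})$; hence $(\chi([v_\alpha]))_{\alpha\in A}$ ranges over exactly those $\delta\in(\mathbb{R}/\mathbb{Z})^A$ with $\sum_\alpha\lambda(\mathcal{O})_\alpha\,\delta_\alpha=0$ for all orbits $\mathcal{O}$ of $\sigma$ not containing $0$, which by the same telescoping identity is precisely the homogeneous form of the third bullet point. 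Thus $b^{(0)}+\delta$ sweeps out the entire affine set of admissible $b$, and choosing the corresponding Heisenberg bundle exhibits $(h,b)$ as a first return map via Proposition~\ref{prop:skew1}.

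The step I expect to be the main obstacle is the geometric bookkeeping in the first half: identifying the chain $D_{\mathcal{O}}$ bounded by $\sum_\alpha\lambda(\mathcal{O})_\alpha v_\alpha$ precisely as a signed union of zippered rectangles, pinning down the sign and the normalization in \eqref{eq:curv} so that its enclosed area comes out to $-\sum_{k\in\mathcal{O}}\lambda_{\pi_0^{-1}(k)}h_{\pi_0^{-1}(k)}$ modulo $\mathbb{Z}$, and verifying that the parallelization in Proposition~\ref{prop:skew1} really makes the ``closing up along $\sigma$'' portion of each $v_\alpha$ twist-free so that $\mathrm{hol}(v_\alpha)=b_\alpha$. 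This is a careful but essentially routine elaboration of the proof of Lemma~\ref{lemma:hom} combined with the curvature relation. Once it is in place, the surjectivity of the flat-twist parametrization onto the admissible $b$'s in the second half is a formal consequence of Theorems~\ref{theo:flat} and \ref{theo:prequant} together with the explicit integer basis of $\ker\Omega_\pi$.
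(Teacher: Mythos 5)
Your proposal is correct and follows essentially the same route as the paper's proof: verifying $h\in H_\pi^+$ and Weil's integrality for the necessity of the first two conditions, deriving the third via the chain identity $\sum_\alpha\lambda(\mathcal{O})_\alpha v_\alpha = J-\sum_{k\in\mathcal{O}}\partial R^0_{\pi_0^{-1}(k)}$ together with the curvature relation \eqref{eq:curv}, and obtaining surjectivity onto the admissible $b$'s from the flat-twist classification of Theorem~\ref{theo:prequant} combined with Lemma~\ref{lemma:hom}. Your write-up of the surjectivity step is in fact more explicit than the paper's, which compresses it into a single sentence.
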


\begin{proof}
    Let $M = M(\pi,\lambda,\tau,h)$ be a suspension surface obtained by applying the zippered rectangle construction to $T$ and $C \to M$ be a Heisenberg circle bundle. Denote by $$\smash{\widehat{T}} \colon I \times \mathbb{R}/\mathbb{Z} \to \mathbb \colon I \times \mathbb{R}/\mathbb{Z}$$ the affine skew product obtained via Proposition \ref{prop:skew1} over the horizontal segment $\gamma = I \times \{0\} \subseteq M$. From the proof of Proposition \ref{prop:skew1} we see that the parameters $h_\alpha$ of the skew product indeed correspond to the heights of the rectangles $R_\alpha^0 \sim R_\alpha^1$. In particular, $h \in H_\pi^+$. Furthermore, Weil's integrality condition \eqref{eq:weil} translates to
    \[
    \mathrm{Area}(M) = \sum_{\alpha \in A} \lambda_\alpha h_\alpha \in 2\pi\mathbb{Z}.
    \]

    Now, as in the proof of Proposition \ref{prop:skew1}, for every $\alpha \in A$ let $b_\alpha \in \mathbb{R}/\mathbb{Z}$ be the second coordinate of the result of parallel transporting $(\partial I_\alpha,0) \in I \times \mathbb{R}/\mathbb{Z} = P$ along the vertical flow of $M$ until its first return to $P$. Consider the map from the space of Heisenberg circle bundles $C \to M$ to $(\mathbb{R}/\mathbb{Z})^\mathcal{A}$ that records the vector $b := (b_\alpha)_{\alpha \in A}$. We claim that, for all orbits $\mathcal{O}$ of $\sigma$ not containing $0$,
    \[
    \sum_{k \in \mathcal{O}} b_{\pi_0^{-1}(k)} - b_{\pi_0^{-1}(k+1)} = \sum_{k \in \mathcal{O}} \lambda_{\pi_0^{-1}(k)} h_{\pi_0^{-1}(k)} \quad \mathrm{mod} \ \mathbb{Z}.
    \]
    These equations are clearly linearly independent. By Theorem \ref{theo:prequant} and Lemma \ref{lemma:hom}, it remains to show that these equation are indeed satisfied by the parameters $b$.

    Fix an orbit $\mathcal{O}$ of $\sigma$ not containing $0$. For simplicity we will assume $d \notin \mathcal{O}$, but the case $d \in \mathcal{O}$ follows by similar arguments. On can show that, as $1$-chains,
    \[
    \sum_{\alpha \in A} \lambda(\mathcal{O})_\alpha v_\alpha= \sum_{k \in \mathcal{O}} \left(v_{\pi_0^{-1}(k)} - v_{\pi_0^{-1}(k+1)}\right)=J - \sum_{k \in \mathcal{O}} \partial R_{\pi_0^{-1}(k)}^0,
    \]
    where $J$ is a $1$-chain in $\gamma = I \times \{0\} \subseteq M$ and the rectangles on the right hand side are outwards oriented. Computing the holonomies on both sides of this equality and using the curvature formula \eqref{eq:curv} yields the desired identity
     \[
    \sum_{k \in \mathcal{O}} b_{\pi_0^{-1}(k)} - b_{\pi_0^{-1}(k+1)} = -\sum_{k \in \mathcal{O}} \lambda_{\pi_0^{-1}(k)} c_{\pi_0^{-1}(k)} \quad \mathrm{mod} \ \mathbb{Z}. \qedhere
    \]
\end{proof}

\subsection{Dynamics and ergodic theory.} We now explore some consequences of Theorem \ref{theo:skew2} on the dynamics of affine skew products over interval exchange transformations. Recall that if $(M,\omega)$ is a translation surface, we denote by $\eta := \frac{i}{2} (\omega \wedge\overline{\omega})$ the corresponding Euclidean area form; we also denote by $\eta$ the corresponding Euclidean measure. Recall also that if $C \to M$ is a Heisenberg circle bundle, we denote by $\nu$ the measure on $C$ that disintegrates as Lebesgue on the fibers and as $\eta$ on the base.

\begin{corollary}
    \label{cor:erg}
    Let $T \colon I \to I$ be and interval exchange transformation and $\smash{\widehat{T}} \colon I \times \mathbb{R}/\mathbb{Z} \to I \times \mathbb{R}/\mathbb{Z}$ be an admissible affine skew product over $T$. Then, if $T$ is any of the following,
\begin{itemize}
    \item minimal,
    \item ergodic with respect to Lebesgue,
    \item uniquely ergodic,
\end{itemize}
then $\smash{\widehat{T}}$ is, respectively,
\begin{itemize}
    \item minimal
    \item ergodic with respect to $Lebesgue$
    \item uniquely ergodic.
\end{itemize}
\end{corollary}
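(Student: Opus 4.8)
The plan is to deduce Corollary~\ref{cor:erg} from the machinery already assembled in the paper rather than arguing directly with the skew product. Indeed, by Theorem~\ref{theo:skew2}, an admissible affine skew product $\widehat{T}$ over $T$ is, up to the chosen identification $P \cong I \times \mathbb{R}/\mathbb{Z}$, exactly the first return map of a Heisenberg translation flow $\Phi^{\widehat{X}}$ on some Heisenberg circle bundle $C \to M$ to the cross section $P$ of fibers above $I$. Here $M = M(\pi,\lambda,\tau,h)$ is the suspension surface produced by the zippered rectangle construction applied to $T$ (with $h \in H_\pi^+$ dictated by the admissible data), and the vertical flow $\phi^X$ on $M$ is the suspension flow over $T$ with the piecewise constant roof function $h$. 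So the first step is to record this dictionary precisely: $\widehat{T}$ is the first return map of $\Phi^{\widehat{X}}$ to $P$, and $\phi^X$ on $M$ is a suspension of $T$.

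Next I would invoke the standard equivalence between dynamical properties of a flow and those of its cross section. The vertical suspension flow $\phi^X$ on $M$ with piecewise constant roof $h$ is minimal (resp.\ uniquely ergodic, resp.\ ergodic with respect to $\eta$) if and only if $T$ is minimal (resp.\ uniquely ergodic, resp.\ ergodic with respect to Lebesgue); this is the classical fact that these properties are preserved under suspension with a roof function bounded away from $0$ and $\infty$, with the normalized measure $\eta$ on $M$ corresponding to the product of Lebesgue on $I$ with Lebesgue on the fiber, renormalized. So the hypotheses on $T$ translate immediately into the corresponding hypotheses on $\phi^X$. Then Corollary~\ref{cor:ethry} applies verbatim to the lift $\Phi^{\widehat{X}}$ on $C$: if $\phi^X$ is minimal (resp.\ uniquely ergodic, resp.\ ergodic with respect to $\eta$), then $\Phi^{\widehat{X}}$ is minimal (resp.\ uniquely ergodic, resp.\ ergodic with respect to $\nu$).

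Finally I would descend from the flow $\Phi^{\widehat{X}}$ on $C$ to its first return map $\widehat{T}$ on $P$, again using that minimality, unique ergodicity, and ergodicity (with respect to the natural measure, here $\nu$ restricted and normalized on $P$, which disintegrates as Lebesgue on fibers over Lebesgue on $I$) pass between a flow admitting a cross section with bounded return time and the induced return map. Since the return time of $\Phi^{\widehat{X}}$ to $P$ coincides with the return time of $\phi^X$ to $I$, namely the piecewise constant function $h$, which is bounded away from $0$ and $\infty$, this transfer is immediate. Assembling the three implications $T \leadsto \phi^X \leadsto \Phi^{\widehat{X}} \leadsto \widehat{T}$ gives the corollary.

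The routine but slightly delicate point — the only real obstacle — is the bookkeeping at the first and last arrows: one must check that under the identification $P \cong I \times \mathbb{R}/\mathbb{Z}$ by parallel transport the invariant measure $\nu$ on $C$ restricts on $P$ to (a constant multiple of) Lebesgue$\times$Lebesgue, so that ``ergodic with respect to $\nu$'' for $\Phi^{\widehat{X}}$ really does correspond to ``ergodic with respect to Lebesgue'' for $\widehat{T}$, and similarly that no measure is lost or distorted in the suspension/return-map passages. This is a standard computation with the Ambrose--Kakutani correspondence between flows and special flows, and the roof function being the bounded piecewise constant $h$ makes all the integrability hypotheses automatic; no new idea is required beyond what is already in \S\ref{sec:affineskewiet}.
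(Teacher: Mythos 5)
Your proposal is correct and follows essentially the same route as the paper: both pass through the chain $T \leadsto \phi^X \leadsto \Phi^{\widehat{X}} \leadsto \widehat{T}$ via Theorem~\ref{theo:skew2} and Corollary~\ref{cor:ethry}, and both handle the final descent by the standard flow/cross-section correspondence (the paper phrases the measure transfer as an affine bijection between invariant measures, which is the same Ambrose--Kakutani bookkeeping you describe). No substantive differences.
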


\begin{proof}
    Using Theorem \ref{theo:skew2}, let $M = M(\pi,\lambda,\tau,h)$ be as in Proposition \ref{prop:skew1}, so that the first return map to the cross section $P = I \times \mathbb{R}/\mathbb{Z}$ of the flow $\smash{\Phi^{\widehat{X}}}$ on $C$ is given by $\smash{\widehat{T}}$. Notice that $T$ is minimal/Lebesgue-ergodic/uniquely-ergodic if and only the vertical flow $\phi^X$ of $M$ is minimal/$\eta$-ergodic/uniquely-ergodic. Furthermore, by Corollary \ref{cor:ethry}, this is equivalent to the flow $\smash{\Phi^{\widehat{X}}}$ on $C$ being minimal/$\nu$-ergodic/uniquely-ergodic. 
    
    The equivalence of the minimality properties is then a direct consequence of the fact that $\smash{\widehat{T}}$-orbits can be obtained as the intersection of $\smash{\Phi^{\widehat{X}}}$-orbits with $P$. Now recall that, for any dynamical system, ergodic measures are exactly the extreme points of the simplex of invariant measures. Thus, the equivalence of the ergodicity and unique-ergodicity properties can be obtained from the following fact: given a $\smash{\widehat{T}}$-invariant measure $\mu$ on $P$, its product with Lebesgue along the $\smash{\Phi^{\widehat{X}}}$-flow-direction is a $\smash{\Phi^{\widehat{X}}}$-invariant measure on $C$; furthermore, this assignment gives an affine bijection of invariant measures. 
\end{proof}

\begin{question}
Does Corollary \ref{cor:erg} hold without the admissibility hypothesis on the affine skew product or is such condition also necessary?
\end{question}

\paragraph*{\bf Constant slopes.} Exporting mixing properties of a flow to its first return maps is a much more delicate question. Nevertheless, in the case where the return times are constant, this is possible, as we now explain. Given an interval $I \subseteq \mathbb{R}$, decompose
\[
L^2(I \times \mathbb{R}/\mathbb{Z}) = L^2(I) \oplus L^2_*(I \times \mathbb{R}/\mathbb{Z}), \quad \text{with } L^2(I) \perp L^2_*(I \times \mathbb{R}/\mathbb{Z}),
\]
where $L^2(I)$ denotes the space of functions depending only on the first coordinate and $L^2_*(I \times \mathbb{R}/\mathbb{Z})$ denotes the space of functions whose fiberwise Lebesgue intergral along the second coordinate vanishes for Lebesgue almost every fiber. A measurable map $\smash{\widehat{T}} \colon I \times \mathbb{R}/\mathbb{Z} \to I \times \mathbb{R}/\mathbb{Z}$ is said to be \textit{relatively mixing} if
\[
\lim_{n \to \infty} \langle f \circ \widehat{T}^n,g\rangle_{L^2(I \times \mathbb{R}/\mathbb{Z}) } = 0, \quad \text{for every }f \in L^2_*(I \times \mathbb{R}/\mathbb{Z}) \text{ and } g \in L^2(I \times \mathbb{R}/\mathbb{Z}).
\]

\begin{corollary}
    \label{cor:erg2}
    Let $T \colon I \to I$ be and interval exchange transformation and $\smash{\widehat{T}} \colon I \times \mathbb{R}/\mathbb{Z} \to I \times \mathbb{R}/\mathbb{Z}$ be an admissible affine skew product over $T$ as in \eqref{eq:skew} with $h \in \R^A_+$ a constant vector. Then, $\smash{\widehat{T}}$ is relatively mixing.
\end{corollary}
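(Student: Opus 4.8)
The plan is to realise $\wT$ as the first return map of a Heisenberg translation flow with \emph{constant} return time and then transfer Theorem~\ref{thm:rel_mix_new} from that flow to its return map. Since $\wT$ is admissible, Theorem~\ref{theo:skew2} provides a suspension surface $M = M(\pi,\lambda,\tau,h)$ obtained from $T$ by the zippered rectangle construction, together with a Heisenberg circle bundle $C\to M$, such that $\wT$ is the first return map of $\Phi^{\wX}$ to the cross section $P = I\times\R/\Z$ (parallelised by parallel transport). Because the slope vector $h=(h_\alpha)_{\alpha\in A}$ is the constant vector, every rectangle $R^0_\alpha\sim R^1_\alpha$ of the construction has the same height $h>0$; hence the first return time of the vertical flow $\phi^X$ to $I$, and therefore of $\Phi^{\wX}$ to $P$, equals the constant $h$ for Lebesgue-almost every point. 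Consequently $(C,\Phi^{\wX},\mu)$ is measurably isomorphic to the suspension of $(\wT,P,\mathrm{Leb})$ with \emph{constant} roof $h$; in suspension coordinates $C\cong(P\times[0,h))/{\sim}$ one has $\Phi^{\wX}_{nh}(p,s)=(\wT^n p,s)$ for all $n\in\Z$, $p\in P$, $s\in[0,h)$.

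Next I would introduce the constant-extension operator $\mathcal{E}\colon L^2(P)\to L^2(C)$ sending $f$ to the function $\tilde f$ that in suspension coordinates is $\tilde f(p,s):=f(p)$ for $s\in[0,h)$, i.e.\ $\tilde f$ is constant along each flow segment $\Phi^{\wX}_{[0,h)}(p)$. Two properties must be verified, both by direct computation. First, $\mathcal{E}$ maps $L^2_*(I\times\R/\Z)$ into $L^2_*(C)$: the restriction of $\Phi^{\wX}_{-s}$ to the fibre $C_m$ of $C$ over $m=(x,s)$ is the parallel transport isometry onto the circle fibre $P_x$, so $\int_{C_m}\tilde f = \int_{P_x}f = 0$ whenever $f$ has vanishing fibrewise integral; and $\|\tilde g\|_{L^2(C)}^2 = c\,\|g\|_{L^2(I\times\R/\Z)}^2$ for a fixed constant $c>0$, so $\mathcal{E}$ indeed lands in $L^2(C)$. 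Second, using $\Phi^{\wX}_{nh}\circ\Phi^{\wX}_s = \Phi^{\wX}_s\circ\Phi^{\wX}_{nh}$ together with $\Phi^{\wX}_{nh}|_P = \wT^n$, one computes
\[
\left\langle \tilde f\circ\Phi^{\wX}_{nh},\,\tilde g\right\rangle_{L^2(C)} \;=\; c\,\left\langle f\circ\wT^{n},\,g\right\rangle_{L^2(I\times\R/\Z)}, \qquad n\in\Z,
\]
for all $f,g\in L^2(I\times\R/\Z)$, with the same constant $c$ coming from the roof $h$ and the normalisation of $\mu$.

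Finally, fix $f\in L^2_*(I\times\R/\Z)$ and $g\in L^2(I\times\R/\Z)$. Apply Theorem~\ref{thm:rel_mix_new} with $\wW=\wX$, so that $(a,b)=(1,0)\neq(0,0)$: since $\tilde f\in L^2_*(C)$, $\tilde g\in L^2(C)$, and $nh\to+\infty$, relative mixing of $\Phi^{\wX}$ (after using unitarity to match the form of the statement) gives $\langle\tilde f\circ\Phi^{\wX}_{nh},\tilde g\rangle_{L^2(C)}\to0$ as $n\to\infty$. By the displayed identity, $\langle f\circ\wT^{n},g\rangle_{L^2(I\times\R/\Z)}\to0$, which is precisely relative mixing of $\wT$.

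The main obstacle, and the only point where the constant-slope hypothesis is essential, is the first step: for a general admissible $\wT$ the $n$-th return time of $\Phi^{\wX}$ to $P$ varies with the point, so $\wT^n$ is not a time-$t$ map of the flow and one cannot invoke decay of correlations along the fixed sequence $t=nh$; when $h$ is constant this difficulty disappears and the argument reduces to the bookkeeping above. One should also exercise the usual care that the vertical flow and the cross section $I$ avoid the finitely many singular points of $M$, so that the suspension description of $(C,\Phi^{\wX},\mu)$ holds up to a set of $\mu$-measure zero, and keep track of the harmless normalisation constant $c$.
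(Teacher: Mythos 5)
Your proposal is correct and follows essentially the same route as the paper: realize $\smash{\widehat{T}}$ via Theorem~\ref{theo:skew2} as the first return map of $\smash{\Phi^{\widehat{X}}}$ with constant return time $h$, thicken functions on the cross section along the flow direction by parallel transport, check that this preserves the subspace of zero fiberwise integral, and apply Theorem~\ref{thm:rel_mix_new} along the sequence $t = nh$. Your constant-extension operator $\mathcal{E}$ is exactly the paper's ``thickening,'' spelled out with slightly more care about the normalization constant and the suspension coordinates.
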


\begin{proof}
    Using Theorem \ref{theo:skew2}, let $M = M(\pi,\lambda,\tau,h)$ be as in Proposition \ref{prop:skew1} so that the first return map to the cross section $P = I \times \mathbb{R}/\mathbb{Z}$ of the flow $\smash{\Phi^{\widehat{X}}}$ on $C$ is given by $\smash{\widehat{T}}$. By assumption we have that $h \in \mathbb{R}_+^\mathcal{A}$ is constant, say with value $c > 0$. In particular, $\smash{\Phi^{\widehat{X}}}_{cn}|_P = \smash{\widehat{T}^n}$ for every $n \in \mathbb{Z}$. Given $f,g\in L^2(I \times \mathbb{R}/\mathbb{Z})$ one can use parallel transport to thicken these functions along the $\smash{\Phi^{\widehat{X}}}$ flow direction to obtain $\smash{\widehat{f}}, \smash{\widehat{g}} \in L^2(C)$ such that
    \[
    \left\langle f \circ \widehat{T}^n,g\right\rangle_{L^2(I \times \mathbb{R}/\mathbb{Z}) } = \left\langle \smash{\widehat{f}} \circ \Phi^X_{cn},\smash{\widehat{g}}\right\rangle_{L^2(C)}, \quad \text{for every } n \in \mathbb{Z}.
    \]
    Furthermore, one can guarantee $f \in L^2_*(I \times \mathbb{R}/\mathbb{Z})$ if and only if $ \smash{\widehat{f}} \in L^2_*(C)$. The desired conclusion then follows directly from Theorem \ref{thm:rel_mix_new}.
\end{proof}

As a direct consequence of Corollary \ref{cor:erg2} we deduce the following:

\begin{corollary}
    \label{cor:erg3}
    Let $T \colon I \to I$ be and interval exchange transformation and $$\smash{\widehat{T}} \colon I \times \mathbb{R}/\mathbb{Z} \to I \times \mathbb{R}/\mathbb{Z}$$ be an admissible affine skew product over $T$ as in \eqref{eq:skew} with $h \in \R^A_+$ a constant vector. Then, $T$ is weak mixing if and only if $\smash{\widehat{T}}$ is weak mixing.
\end{corollary}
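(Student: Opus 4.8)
The plan is to prove the two implications separately, with the forward direction being an immediate combination of Corollary \ref{cor:erg2} with weak mixing of the base $T$.

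For the ``only if'' direction I would simply observe that the projection $I \times \mathbb{R}/\mathbb{Z} \to I$ onto the first coordinate intertwines $\smash{\widehat{T}}$ with $T$ and pushes the Lebesgue probability measure on $I \times \mathbb{R}/\mathbb{Z}$ forward to Lebesgue on $I$, so $T$ is a measure-theoretic factor of $\smash{\widehat{T}}$. Since weak mixing passes to factors --- any eigenfunction of the Koopman operator of $T$ pulls back to an eigenfunction of the Koopman operator of $\smash{\widehat{T}}$ with the same eigenvalue --- weak mixing of $\smash{\widehat{T}}$ forces weak mixing of $T$.

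For the ``if'' direction, suppose $T$ is weak mixing. I would use the $\smash{\widehat{T}}$-invariant orthogonal splitting
\[
L^2(I \times \mathbb{R}/\mathbb{Z}) = L^2(I) \oplus L^2_*(I \times \mathbb{R}/\mathbb{Z}),
\]
which is just the decomposition into Fourier modes along the circle fibers and is preserved by $\smash{\widehat{T}}$ because rotation along the fibers commutes with $\smash{\widehat{T}}$ (the skewing acts by an isometry on each fiber). Given mean-zero $f, g \in L^2(I \times \mathbb{R}/\mathbb{Z})$, write $f = f_0 + f_*$ and $g = g_0 + g_*$ with $f_0, g_0 \in L^2(I)$ (the fiberwise averages, still of zero mean) and $f_*, g_* \in L^2_*(I \times \mathbb{R}/\mathbb{Z})$. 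Invariance of the splitting and orthogonality kill the cross terms, so for every $n \in \mathbb{N}$,
\[
\langle f \circ \widehat{T}^n, g \rangle_{L^2(I \times \mathbb{R}/\mathbb{Z})} = \langle f_0 \circ T^n, g_0 \rangle_{L^2(I)} + \langle f_* \circ \widehat{T}^n, g_* \rangle_{L^2(I \times \mathbb{R}/\mathbb{Z})}.
\]
The first term has Cesàro averages of absolute values tending to $0$ because $T$ is weak mixing and $f_0, g_0$ have zero mean; the second term tends to $0$ in the ordinary sense by the relative mixing of $\smash{\widehat{T}}$ established in Corollary \ref{cor:erg2} (this is exactly where the hypothesis that $h$ is constant enters), hence its Cesàro averages also tend to $0$. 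Adding, $\tfrac{1}{N}\sum_{n=0}^{N-1} \lvert \langle f \circ \widehat{T}^n, g \rangle_{L^2(I \times \mathbb{R}/\mathbb{Z})}\rvert \to 0$, which is weak mixing of $\smash{\widehat{T}}$.

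I do not expect a genuine obstacle here: all the analytic substance --- decay of correlations on the orthogonal complement of fiber-constant functions --- is already contained in Corollary \ref{cor:erg2}, which itself rests on Theorem \ref{thm:rel_mix_new}. The only points needing a sentence of care are checking that the fiberwise Fourier splitting is genuinely $\smash{\widehat{T}}$-invariant and making sure one consistently uses the $L^2$/correlations formulation of weak mixing, so that the base contribution and the fiber contribution recombine cleanly.
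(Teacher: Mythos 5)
Your proof is correct, and both implications go through; the route differs slightly from the paper's. The paper's proof is a two-line argument via the \emph{spectral} characterization of weak mixing: since the fiberwise Fourier splitting is $\smash{\widehat{T}}$-invariant, the $L^2_*$-component of any eigenfunction is again an eigenfunction, and relative mixing (Corollary \ref{cor:erg2}) forbids eigenfunctions in $L^2_*(I\times\mathbb{R}/\mathbb{Z})$ (an eigenfunction $f$ there would have $\vert\langle f\circ\widehat{T}^n,f\rangle\vert \equiv \Vert f\Vert^2 \not\to 0$); hence every eigenfunction of $\smash{\widehat{T}}$ is pulled back from an eigenfunction of $T$, and both directions of the equivalence follow at once. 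You instead work with the \emph{correlation/Ces\`aro} formulation: the forward direction by splitting $f=f_0+f_*$, $g=g_0+g_*$, killing the cross terms by orthogonality and invariance of the splitting, and treating the base term by weak mixing of $T$ and the fiber term by Corollary \ref{cor:erg2}; the backward direction by the standard fact that weak mixing passes to factors. The two arguments rest on exactly the same analytic input and the same invariant splitting, so they are close in substance; yours is more explicit and handles the two directions asymmetrically, while the paper's eigenfunction argument is terser and gives both directions simultaneously (at the cost of leaving to the reader the observation that the $E_0^\perp$-component of an eigenfunction must vanish). The points you flag as needing care --- invariance of the splitting under $\smash{\widehat{T}}$ (the skewing rotates each fiber, so fiber averages are preserved) and the vanishing of the cross terms --- are indeed the only details, and your treatment of them is right.
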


\begin{proof}
    Corollary \ref{cor:erg2} implies eigenfunctions of $\smash{\widehat{T}}$ comes from eigenfunctions of $T$. Thus, the desired conclusion follows from the spectral characterization of weak mixing.
\end{proof}

\begin{question}
    Do Corollaries \ref{cor:erg2} and/or \ref{cor:erg3} hold for non-constant vectors $h \in \R^A_+$? What if one drops the admissibility hypothesis on the affine skew product?
\end{question}



\subsection{The cohomological equation.} \label{sec:furst} We now show how to use an argument of Furstenberg \cite{Fu61}, inspired by work of Auslander and Green~\cite{nil2}, to prove Corollary~\ref{cor:cohomological}. We first recall \textit{Furstenberg's criterion} for general skew-products and sketch its proof. 

Let $T_0: X_0 \rightarrow X_0$ be a continuous and uniquely ergodic transformation of a compact metric space $\Om_0$ with $\mu_0$ the unique $T_0$-invariant measure on $X_0$, and let $g: X_0 \rightarrow \R/\Z$ be continuous. We then define the skew-product $T: X \rightarrow X$ on $X := X_0 \times \R/\Z$ by \begin{equation}\label{eq:skewproduct} T(x, \rho) := (T_0 (x), \rho+g(x)), \quad \text{for every } (x,\rho) \in X_0 \times \mathbb{R}/\mathbb{Z}. \end{equation} By definition, $T$ preserves the measure $\mu := \mu_0 \times m$, where $m$ is the Lebesgue probability on $\R/\Z$. In this context, \textit{Furstenberg's criterion} can be stated as follows:

\begin{theorem}
    \label{theo:furst}
    In the context above, $T$ is not uniquely ergodic if and only if there is exists $n \in \Z \setminus \{0\}$ and a measurable function $u: X_0 \rightarrow \R/\Z$ such that \begin{equation}\label{eq:furstenberg} u \circ T_0 - u = n \cdot g. \end{equation}
\end{theorem}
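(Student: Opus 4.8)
The plan is to prove the two implications of the equivalence separately. The implication ``solvability of \eqref{eq:furstenberg} $\Rightarrow$ $T$ not uniquely ergodic'' is elementary and proceeds by exhibiting a nonconstant $T$-invariant measurable function; the converse is the substantive part and rests on Fourier analysis along the circle fibres, exploiting that the rotation action $R_s\colon (x,\rho)\mapsto(x,\rho+s)$ of $\R/\Z$ on $X$ commutes with $T$. (As in all the applications in this paper, I take $T_0$ to be a homeomorphism, so that the disintegration identity below holds cleanly.)

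For the ``if'' direction, suppose $u\circ T_0-u=n\cdot g$ with $n\neq0$ and $u\colon X_0\to\R/\Z$ measurable. I would set $\Phi(x,\rho):=e^{2\pi i(n\rho-u(x))}$ and verify, using $n\,g(x)=u(T_0x)-u(x)$, that $\Phi\circ T=\Phi$. Since $n\neq0$ we have $\int_X\Phi\,d\mu=0$ while $|\Phi|\equiv1$, so $\Phi\in L^\infty(X,\mu)$ is $T$-invariant but not $\mu$-a.e.\ constant; hence $\mu$ is not $T$-ergodic, and its ergodic decomposition already yields two distinct ergodic $T$-invariant probability measures, so $T$ is not uniquely ergodic. (Equivalently, one can display the second invariant measure explicitly as the fibrewise normalized counting measure on the $\mu$-null set $\{(x,\rho):e^{2\pi in\rho}=e^{2\pi iu(x)}\}$.)

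For the ``only if'' direction, assume $T$ is not uniquely ergodic, so that --- as $\mu$ itself is $T$-invariant --- there is a $T$-invariant Borel probability measure $\nu\neq\mu$ on $X$. Its push-forward under the projection $X\to X_0$ is $T_0$-invariant, hence equals $\mu_0$ by unique ergodicity of $T_0$, so $\nu$ disintegrates as $\nu=\int_{X_0}\nu_x\,d\mu_0(x)$ with $x\mapsto\nu_x$ a measurable family of probability measures on $\R/\Z$. The $T$-invariance of $\nu$ becomes $\nu_{T_0x}=(R_{g(x)})_*\nu_x$ for $\mu_0$-a.e.\ $x$, which on the level of Fourier coefficients $c_k(x):=\int_{\R/\Z}e^{-2\pi ik\rho}\,d\nu_x(\rho)$ reads $c_k(T_0x)=e^{-2\pi ikg(x)}c_k(x)$ a.e. Consequently $|c_k|$ is $T_0$-invariant, hence $\mu_0$-a.e.\ equal to a constant $a_k\ge0$ (here I use that unique ergodicity of $T_0$ forces ergodicity of $\mu_0$), with $a_0=1$. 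If $a_k=0$ for every $k\neq0$, then $\nu_x$ is Lebesgue measure for a.e.\ $x$ and $\nu=\mu_0\times m=\mu$, a contradiction; so there is some $n\neq0$ with $a_n>0$. For that $n$, $c_n/a_n$ takes values in the unit circle $\mu_0$-a.e., so --- via the canonical identification $S^1\cong\R/\Z$ --- I can write $c_n(x)=a_n e^{-2\pi iu(x)}$ for a measurable $u\colon X_0\to\R/\Z$; plugging this into $c_n(T_0x)=e^{-2\pi ing(x)}c_n(x)$ and cancelling $a_n$ gives $e^{-2\pi iu(T_0x)}=e^{-2\pi i(u(x)+ng(x))}$, i.e.\ $u\circ T_0-u=n\cdot g$, as desired.

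The main obstacle is not any deep analytic difficulty but the measure-theoretic bookkeeping in the second direction: justifying the measurable disintegration of $\nu$, making a measurable choice of the argument of $c_n$, and taking the (countable) intersection over $k$ needed to conclude $\nu_x$ is Lebesgue when all the $a_k$ with $k\neq0$ vanish. Everything else follows mechanically once the commuting circle action $R_s$ is brought in.
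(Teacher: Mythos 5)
Your proof is correct, and while the ``if'' direction coincides with the paper's (the invariant function $e^{2\pi i(n\rho-u(x))}$), your ``only if'' direction takes a genuinely different route. The paper first invokes, as a black box, Furstenberg's separate equivalence between unique ergodicity of $T$ and ergodicity of $\mu=\mu_0\times m$, and then extracts the cohomological equation from the Fourier expansion of a non-constant $T$-invariant $L^2$ function. You instead start from a second invariant measure $\nu\neq\mu$, disintegrate it over $\mu_0$, and read off the cocycle identity from the Fourier coefficients of the fibre measures $\nu_x$. This buys you a self-contained argument that does not rely on the auxiliary equivalence (your ``if'' direction only needs the trivial implication ``$\mu$ not ergodic $\Rightarrow$ $T$ not uniquely ergodic'' via the ergodic decomposition), at the cost of the disintegration bookkeeping you acknowledge. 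Both arguments hinge on the same mechanism --- $|c_n|$ is $T_0$-invariant, hence a.e.\ constant by ergodicity of $\mu_0$, which is what licenses taking a measurable logarithm/argument of $c_n$ --- and in fact you make explicit a point the paper's sketch glosses over: to set $u=\tfrac{1}{2\pi i}\log c_{n_0}$ one needs $c_{n_0}\neq0$ a.e., not merely $c_{n_0}\not\equiv0$. Your parenthetical assumption that $T_0$ is invertible (needed for the clean identity $\nu_{T_0x}=(R_{g(x)})_*\nu_x$) is harmless here, since the paper applies the criterion only to interval exchange transformations, which are bijective.
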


\begin{proof}[Sketch of proof.]
    We follow Furstenberg's argument in~\cite[Lemma 2.1]{Fu61} but translated into our additive notation. By a separate argument of Furstenberg, the unique ergodicity of $T$ is equivalent to the ergodicity of the measure $\mu$. If $T$ is not ergodic, there exists a non-constant, $T$-invariant function  $F \in L^2(X, \mu)$. Then, for any $x \in X_0$, we can consider the Fourier expansion of this function in the $\rho$ variable, i.e., \begin{equation}\label{eq:fourier} F(x, \rho) = \sum_{n \in \Z} c_n(x) e^{2\pi i n \rho}.\end{equation} Since $F$ is not constant, there exists $n_0 \in \Z\minuszero$ such that $c_{n_0}(x)$ is not identically zero. Furthemore, by $T$-invariance, we have $$\sum_{n \in \Z} c_n(T_0 x) e^{2\pi i n(\rho+g(x))} = F(T(x, \rho)) = F(x, \rho) = \sum c_n(x) e^{2\pi in \rho}.$$ By uniqueness of Fourier coefficients we deduce $$c_{n_0}(x) = c_{n_0}(T_0 x) e^{2\pi i n g(x)}.$$ Since $c_{n_0} (x) \not\equiv 0$, we can let $u(x) := \frac{1}{2\pi i} \log c_{n_0}(x)$ to obtain a function satisfying \eqref{eq:furstenberg} with $n=n_0$. Conversely, if we have a solution $u$ to \eqref{eq:furstenberg} with $n=n_0 \neq 0$, then $$F(x, \rho) := e^{2\pi i(n_0 \rho-u(x))}$$ is a $T$-invariant, non-constant function in $L^2(X,\mu)$.\\
\end{proof}

\paragraph*{\bf Applications} 
We now use Furstenberg's criterion to prove the last result of this paper.

\begin{proof}[Proof of Corollary~\ref{cor:cohomological}]
    If a solution to \eqref{eq:cohom} existed, the admissible skew-product would be non-uniquely-ergodic by Theorem \ref{theo:furst}. Thus, since Corollary \ref{cor:erg} guarantees that admissible skewing functions over uniquely ergodic interval exchange transformations give rise to uniquely erogic skew products, there are no non-trivial solutions to \eqref{eq:cohom}.\qedhere
\end{proof}

\bibliographystyle{amsalpha}


\bibliography{bibliography}

\providecommand{\bysame}{\leavevmode\hbox to3em{\hrulefill}\thinspace}
\providecommand{\MR}{\relax\ifhmode\unskip\space\fi MR }
\providecommand{\MRhref}[2]{%
  \href{http://www.ams.org/mathscinet-getitem?mr=#1}{#2}
}
\providecommand{\href}[2]{#2}
\begin{thebibliography}{AHCF24}

\bibitem[AGH63]{nil2}
L.~Auslander, L.~Green, and F.~Hahn, \emph{Flows on homogeneous spaces},
  Princeton University Press, Princeton, N.J., 1963. \MR{29 \#4841}

\bibitem[AHCF24]{AHCF24}
F.~Arana-Herrera, J.~Chaika, and G.~Forni, \emph{Weak mixing in rational
  billiards}, preprint, arXiv:2410.11117 (2024), 1--36.

\bibitem[BG88]{Beals}
Richard Beals and Peter Greiner, \emph{Calculus on {Heisenberg} manifolds},
  Ann. Math. Stud., vol. 119, Princeton, NJ: Princeton University Press, 1988
  (English).

\bibitem[Fau07]{Faure07}
Fr{\'e}d{\'e}ric Faure, \emph{Prequantum chaos: resonsances of the prequantum
  cat map}, J. Mod. Dyn. \textbf{1} (2007), no.~2, 255--285 (English).

\bibitem[FFK21]{FFK21}
B.~FAYAD, G.~FORNI, and A.~KANIGOWSKI, \emph{Lebesgue spectrum of countable
  multiplicity for conservative flows on the torus}, J. Amer. Math. Soc.
  \textbf{34} (2021), 747--813.

\bibitem[FT15]{FT15}
Fr{\'e}d{\'e}ric Faure and Masato Tsujii, \emph{Prequantum transfer operator
  for symplectic {Anosov} diffeomorphism}, Ast{\'e}risque, vol. 375, Paris:
  Soci{\'e}t{\'e} Math{\'e}matique de France (SMF), 2015 (English).

\bibitem[Fur61]{Fu61}
H.~Furstenberg, \emph{Strict ergodicity and transformation of the torus},
  American Journal of Mathematics \textbf{83} (1961), no.~4, 573--601.

\bibitem[Hal13]{Hall}
Brian~C. Hall, \emph{Quantum theory for mathematicians}, Grad. Texts Math.,
  vol. 267, New York, NY: Springer, 2013 (English).

\bibitem[Mar77]{Marcus}
Brian Marcus, \emph{Ergodic properties of horocycle flows for surfaces of
  negative curvature}, Ann. of Math. (2) \textbf{105} (1977), no.~1, 81--105.
  \MR{458496}

\bibitem[Vee82]{veech}
William~A. Veech, \emph{Gauss measures for transformations on the space of
  interval exchange maps}, Ann. of Math. (2) \textbf{115} (1982), no.~1,
  201--242. \MR{644019}

\bibitem[Via06]{VianaIET}
Marcelo Viana, \emph{Ergodic theory of interval exchange maps}, Rev. Mat.
  Complut. \textbf{19} (2006), no.~1, 7--100. \MR{2219821}

\bibitem[Woo92]{quant}
N.~M.~J. Woodhouse, \emph{Geometric quantization}, second ed., Oxford
  Mathematical Monographs, The Clarendon Press, Oxford University Press, New
  York, 1992, Oxford Science Publications. \MR{1183739}

\end{thebibliography}

$ $

\end{document}